\newcommand{\Z}{\mathbb Z}
\newcommand{\R}{\mathbb R}
\renewcommand{\H}{\mathbb H}
\newcommand{\calP}{\mathcal {P}}
\newcommand{\calG}{\mathcal {G}}
\newcommand{\HNN}{\Z^m*_{g \mapsto g^3}}
\def\Tree{{\mathrm {Tree}}}
\newcommand{\Sm}[1] {\mathbb{S}_{\Z^{#1}}}
\newtheorem{thm}{Theorem}
\newtheorem{lem}[thm]{Lemma}
\newtheorem{prop}[thm]{Proposition}
\newtheorem{cor}[thm]{Corollary}
\newtheorem{remark}[thm]{Remark}
\newtheorem{defn}[thm]{Definition}
\let\phi\varphi
\begin{document}

\title{Growth in higher Baumslag-Solitar groups}
\author{Ayla P. S\'anchez and Michael Shapiro}

\begin{abstract}
We study the HNN extension of $\Z^m$ given by the cubing endomorphism
$g\mapsto g^3$, and prove that such groups have rational growth with
respect to the standard generating sets.  We compute the subgroup
growth series of the horocyclic subgroup $\Z^m$ in this family of
examples, prove that for each $m$ the subgroup has rational growth.
We then use the tree-like structure of these groups to see how to
compute the growth of the whole group.  
In the appendix, the subgroup
growth series has been computed for all $m \leq 10$.  
\end{abstract}

\maketitle

\section{Introduction} \label{sec-intro}


In this paper we will study the HNN extension of $\Z^m$ by the
expanding map $\varphi(g)=g^3$ that cubes all elements.  When $m=1$,
this gives the well-known Baumslag-Solitar group $BS(1,3)$.  The
groups $\HNN$ can be thought of as graphs of groups given by a loop on
a single vertex labeled by $\Z^m$.  These groups are solvable but not
nilpotent: the commutator subgroup, and indeed each term in the lower
central series, is isomorphic to $\Z^m$.  Their model spaces are
combinatorially $\Tree \times \Z^m$, and the tree admits a height
function which gives a horofunction $\tau$ on the group.  Its level
sets are countably many copies of $\Z^m$. We refer to $\Z^m=\langle
a_1,\dots,a_m\rangle$ as the {\em horocyclic subgroup} of $\HNN$.
Below, we define and study the subgroup growth series $\Sm{m} (x)$ for
the horocyclic subgroups of these cubing extensions, with standard
generators $\langle a_1,\dots,a_m,t\rangle$.  Our main result 
will be to show the rationality of the group growth series via the
growth of the horocyclic subgroup.  Rationality of growth series for
groups has been studied intensively since at least the 1980s, and
subgroup growth has been a key tool, as explained in the introduction
of the important paper \cite{Stoll}.

Rationality for the $m=1$ case follows from the results of Collins,
Edjvet, and Gill in \cite{CEG}, and the growth series is explicitly
computed by Freden, Knudson, and Schofield in \cite{FKS-GBSG1} and
discussed in further detail by Freden in \cite{FredenOHGGT}, where the
author also outlines the computation in the $m=2$ case.  Using the
different approach we take here, we recover these results and produce
formulas for the subgroup growth for all $m$.

To summarize our results, we need to make a few definitions first. 
 We will establish in Theorem \ref{LanguageTheorem} that with the
exception of the single group element $a_1 \ldots a_m$, all words in
the positive orthant of $\Z^m$ can be expressed in a geodesic normal
form $t^n v s$, where $v$ comes from a finite list of strings called
{\em caps} and $s \in (t^{-1}W)^n$, where $W$ is also finite. 
 We refer to pairs $(t^n, s)$ as {\em prefix/suffix
  pairs}.  We then define the {\em prefix/suffix series} $R_m(x)$ to
be the power series where the coefficient of $x^n$ is the number of
prefix/suffix pairs whose combined length is $n$.  Similarly, we
define the {\em cap polynomial} $V_m(x)$ to be the power series where
the coefficient of $x^n$ is the number of caps of length $n$, polynomial
becasue $V$ is finite. 
Putting this information together, we can build the growth series of
the positive orthant of $\Z^m$ in $\Z^m*_{g \mapsto g^3}$, which we
call the {\em positive series} $P_m(x)$, and we finally patch those
together to build $\Sm{m}(x)$.

These form a recursive system and allow us to compute the subgroup growth explicitly, 
which directly shows the our first main result:
\begin{thm} \label{allrational}
For each $m$,  the subgroup growth series of  $\Z^m < \Z^m *_{g \mapsto g^3}$ with standard generators is rational.
\end{thm}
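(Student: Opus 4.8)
The plan is to exhibit $\Sm{m}(x)$ as built by finitely many rational operations from the finite data furnished by Theorem~\ref{LanguageTheorem}, after first reducing to the positive orthant. For this, note that the signed permutations of $\{a_1,\dots,a_m\}$ extend to automorphisms of $\HNN$ fixing $t$: each of $a_i\mapsto a_i^{-1}$ and $a_i\leftrightarrow a_j$ carries the defining relations $t^{-1}a_kt=a_k^{3}$ and the commutators $[a_i,a_j]$ to consequences of the relations, and being an involution it is an automorphism; these automorphisms permute the standard generating set, hence act on the horocyclic $\Z^m$ by bijections preserving ambient word length. Moreover, for any $S\subseteq\{1,\dots,m\}$ the assignment $a_i\mapsto 1$ for $i\in S$, fixing the remaining $a_i$ and $t$, defines a retraction $\HNN\twoheadrightarrow\Z^{m-|S|}*_{g\mapsto g^{3}}$; it never increases word length and restricts to the identity on the coordinate subgroup $\Z^{m-|S|}$, so the ambient metric on that subgroup coincides with the ambient metric of the smaller group. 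Partitioning $\Z^m$ by the vanishing set and the signs of the coordinates, the sign symmetry identifies the growth series of every region with a fixed support $T$, $|T|=j$, with the growth series $P_{j}^{\circ}(x)$ of the strictly-positive orthant of $\Z^{j}*_{g\mapsto g^{3}}$; since there are $\binom{m}{j}$ supports of size $j$ and $2^{j}$ sign patterns on each,
\[
\Sm{m}(x)=\sum_{j=0}^{m}\binom{m}{j}2^{\,j}P_{j}^{\circ}(x),
\]
and a further inclusion--exclusion over the faces of the orthant expresses each $P_{j}^{\circ}(x)$ as a $\Z$-linear combination of the positive series $P_{0}(x),\dots,P_{j}(x)$. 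So it suffices to prove that each $P_{j}(x)$ is rational.

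Fix $m$. By Theorem~\ref{LanguageTheorem}, every element of $\Z_{\ge0}^{m}$ other than $a_1\cdots a_m$ is represented by a unique geodesic normal form $t^{n}vs$, where $v$ is a cap drawn from the finite list and $s=t^{-1}w_1\cdots t^{-1}w_n\in(t^{-1}W)^{n}$ for the finite set $W$; such a word has ambient length $|t^{n}|+|v|+|s|=n+|v|+\bigl(n+\sum_{i}|w_{i}|\bigr)$. Set $W(x)=\sum_{w\in W}x^{|w|}$, a polynomial. A suffix built from $n$ blocks $t^{-1}w$ has length generating function $\bigl(xW(x)\bigr)^{n}$, so pairing it with the prefix weight $x^{n}$ and summing over $n$ recovers the prefix/suffix series
\[
R_{m}(x)=\sum_{n\ge0}x^{n}\bigl(xW(x)\bigr)^{n}=\frac{1}{1-x^{2}W(x)}\in\mathbb{Q}(x).
\]
(If consecutive blocks are restricted by a finite compatibility condition, one replaces the scalar $xW(x)$ by a polynomial transfer matrix $M(x)$ and the geometric sum by $\mathbf{u}^{\!\top}\bigl(I-xM(x)\bigr)^{-1}\mathbf{v}$, which remains rational.) The caps available at level $n$ form a finite set, eventually independent of $n$, with generating polynomial $V_{m}(x)$.

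Now group each element of $\Z_{\ge0}^{m}$ with its normal form: a choice of cap together with a prefix/suffix pair records exactly the triple $(n,v,s)$, so up to a polynomial correction --- absorbing the exceptional element $a_1\cdots a_m$ and the finitely many heights at which the cap set has not yet stabilized --- we get
\[
P_{m}(x)=\bigl(\text{a polynomial in }x\bigr)+V_{m}(x)\,R_{m}(x)\in\mathbb{Q}(x)
\]
for every $m$. Substituting into the displayed formula of the first paragraph exhibits $\Sm{m}(x)$ as a finite $\mathbb{Q}(x)$-combination of rational functions, hence rational, which is the assertion. These relations among $R_m$, $V_m$, $P_m$, and $\Sm{m}$ are exactly the ``recursive system'' mentioned above, read as a forward computation.

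The closure-property bookkeeping here --- the geometric (or transfer-matrix) sum for $R_m$, the symmetry reduction, and the linear algebra over $\mathbb{Q}(x)$ --- is routine; the substance is entirely in Theorem~\ref{LanguageTheorem}, and that is where I expect the real difficulty. One must produce the finite data (the cap list and the digit set $W$) uniformly in $m$, presumably assembling the $m$-dimensional version from lower-dimensional caps --- this is the genuinely recursive ingredient --- then prove that the words $t^{n}vs$ are actually geodesic, so that no amount of base-$3$ ``carrying'' or rebracketing produces a shorter word, and finally verify that distinct admissible triples $(n,v,s)$ yield distinct group elements, so that the generating-function identities above do not over-count. Granting that normal-form theorem, the rationality of $\Sm{m}(x)$ follows as above.
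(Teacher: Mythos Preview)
Your outer frame is right and matches the paper: the signed-permutation automorphisms and the coordinate retractions reduce $\Sm{m}$ to a $\Z$-linear combination of the positive series $P_j$, and the paper's final step is exactly $\Sm{m}(x)=\sum_{j}\binom{m}{j}2^{j}P_{j}(x)$. The gap is in your computation of $P_m$ for $m\ge2$. You read the introduction's summary of Theorem~\ref{LanguageTheorem} literally, as saying that the suffix $s$ ranges \emph{freely} over $(t^{-1}W_m)^{n}$ once a cap $v$ is chosen, and then conclude $P_m=(\text{polynomial})+V_m(x)R_m(x)$. That is false already for $m=2$: the paper obtains
\[
P_2(x)=x^{2}+V_2(x)R_2(x)+2\,V_1(x)R_1(x)\cdot x\cdot R_2(x),
\]
and the extra term $2V_1R_1\,x\,R_2$ is not a polynomial. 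The reason is that the actual language $L_m$ (Definition~\ref{def:language}) allows the cap to use only a proper subset $X\subsetneq\{a_1,\dots,a_m\}$ of the generators; the early suffix blocks are then constrained to $TW_X$, and the remaining generators enter one subset at a time via $U$-insertions at intermediate heights, after which the suffix alphabet enlarges. If you drop those constraints you over-count: for instance $t^{2}a^{2}\cdot(TB)(Ta)$ has cap $a^{2}\in V_a$ and suffix in $(TW_2)^2$, but it evaluates to $a^{19}b^{-3}$, outside the positive orthant.

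Your parenthetical about a transfer matrix is precisely the fix, and for $m\ge2$ it is not optional. The paper implements it as a generalized FSA whose states are the subsets $X\subseteq\{a_1,\dots,a_m\}$ of generators already seen, with loops labelled $TW_X$ and transitions labelled $U_{X'\setminus X}$ (see the machine after Definition~\ref{def:language} and Figure~\ref{fig:Z2Machine}); summing over paths gives the ordered-partition formula for $P_m$ in \S\ref{sec-Z3+}. So the argument you want is: Theorem~\ref{LanguageTheorem} plus this gFSA description, together with the $t^n\leftrightarrow(tT\cdot)^n$ shuffle of Remark~\ref{rem-bijectsToRegular}, puts the orthant in length-preserving bijection with a regular language, whence $P_m$ is rational. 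The single product $V_mR_m$ is not that language's growth series.
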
 

Using this, we then build a tree of cosets of $\Z^m$, and show the following:

\begin{thm} \label{grouprational}
The growth series of $\Z^m *_{g \mapsto g^3}$ with standard generators is rational for all $m$.
\end{thm}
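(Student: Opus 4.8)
The plan is to count the elements of $G=\HNN$ by walking the tree of cosets of the horocyclic subgroup $\Z^m$ and reducing every count to the rational building blocks $\Sm{m}(x)$, $P_m(x)$, $R_m(x)$ and $V_m(x)$ already produced by the recursive system. Recall that $G$ is an ascending HNN extension, so by Britton's lemma each element has a unique reduced normal form $t^{-a}\,w\,t^{b}$ with $a,b\ge 0$ and $w\in\Z^m$, and that the Bass--Serre tree $T$, on which $\tau$ is a horofunction, has exactly $3^{m}$ children below each vertex and a single parent, so $T$ is self-similar below any vertex. An element $g$ is determined by the coset $g\Z^m$, i.e.\ a vertex of $T$, together with its horocyclic coordinate inside that coset, and its word length is governed by the shape of a geodesic from the base vertex $\star$ to $g$. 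The first step is to show that such a geodesic still has the ``climb, cap, descent'' shape isolated in Theorem~\ref{LanguageTheorem} for the horocyclic elements themselves: it rises to some level $n$ (at least as high as forced by the common ancestor of $\star$ and $g\Z^m$), performs bounded horocyclic travel recorded by a cap $v$, and then descends through the tree via a word in $(t^{-1}W)^{n}$, the only difference being that the descent now terminates at the vertex $g\Z^m$ rather than back at $\star$. Equivalently, one promotes Theorem~\ref{LanguageTheorem} from $\Z^m$ to all of $G$, at the cost of enlarging the finite data sets by a bounded amount.

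Granting that structural result, I would assemble $\calG(x)$ directly. Using the sign-flip symmetry --- the maps $a_i\mapsto a_i^{\pm 1}$ commute with the cubing endomorphism and hence extend to $2^{m}$ length-preserving automorphisms of $G$ fixing $t$ --- the horocyclic contribution of each coset can be built from the single positive series $P_m(x)$ together with an inclusion--exclusion correction along the coordinate hyperplanes, exactly as in the passage from $P_m$ to $\Sm{m}$. The remaining data is which coset is reached and at what cost the branching descent from level $n$ is made: since each descending step is a choice among $3^{m}$ children paired with a block from the finite set $t^{-1}W$, the descent to a given depth contributes a rational (geometric-type) factor assembled from $R_m(x)$. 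Summing over coset targets, descent depths, climb heights $n$ and caps $v$, and adding the contribution $\Sm{m}(x)$ of the elements with $g\Z^m=\star$, expresses $\calG(x)$ as an explicit finite rational combination of $\Sm{m}(x)$, $P_m(x)$, $R_m(x)$, $V_m(x)$ and polynomials. Theorem~\ref{allrational} and the recursion behind it make each block rational, so $\calG(x)$ is rational.

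The step I expect to be the genuine obstacle is the first one: proving that a geodesic to an arbitrary coset really does take the climb/cap/branching-descent form with no unbounded wandering, and that distinct triples $(n,v,s)$ pin down disjoint sets of group elements so that the assembled series counts each $g$ exactly once. This forces the exceptional element $a_1\cdots a_m$ and the finitely many short, near-root configurations to be split off as separate polynomial corrections, and it requires verifying that the interaction between the top-level horocyclic travel and the branching descent is finite-state --- which is exactly what keeps $\calG(x)$ rational rather than merely algebraic. Once that is in hand, the generating-function bookkeeping above is routine.
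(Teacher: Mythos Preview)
Your plan diverges from the paper's and, as stated, contains a structural error in the tree picture that breaks the proposed normal form.

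In $\HNN$ the relation $ta_it^{-1}=a_i^{3}$ gives $t\Z^{m}t^{-1}=3\Z^{m}\subset\Z^{m}$, so from any coset $g\Z^{m}$ there is exactly \emph{one} adjacent coset with smaller $\tau$ (reached by $T$) and $3^{m}$ with larger $\tau$ (reached by $ut$, $u\in W_{m}$). The branching is \emph{upward}; the horofunction end sits at $\tau\to-\infty$. Consequently a descent word in $(TW_{m})^{k}$ travels along a uniquely determined downward ray of cosets. Starting from a cap above $\Z^{m}$, such a descent can only land in the cosets $t^{j}\Z^{m}$ or $T^{j}\Z^{m}$ on the single axis through the basepoint; it can never reach a branched coset such as $at\Z^{m}$ or $T^{6}at\Z^{m}$. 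So the shape ``climb by $t^{n}$, cap, descend to $g\Z^{m}$'' you propose simply cannot hit most cosets, and the promotion of Theorem~\ref{LanguageTheorem} to all of $G$ fails in that form. The paper's lemma on stems makes the correct shape explicit: the shortest coset representative is $T^{n}(W_{m}t)^{j}$, i.e.\ \emph{down then up}, and a geodesic to a general element then climbs further, caps, and comes back down. Your inverted ``$3^{m}$ children below, single parent above'' is exactly the reversal that makes your picture look plausible.

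The paper sidesteps any global geodesic normal form. It stratifies cosets by a coarse \emph{level} $\min(\tau,0)$, and for each stem $w$ studies the relative growth series $B_{w}(x)=\sum_{r}\#\{z\in\Z^{m}:\ell(wz)=\ell(w)+r\}\,x^{r}$. Two short local lemmas---comparing lengths in a coset with those in the unique coset directly below it---show that $B_{w}$ depends only on the level and that $B_{-n}(x)=W_{m}(x)^{n}\,\Sm{m}(x)$. A separate, equally local recursion counts how many cosets sit at each level and distance, giving rational series $X_{-n}(x)=x^{\,n-1}X_{-1}(x)$ for $n\ge 1$. The whole growth is then $\mathbb{S}(x)=\sum_{n\ge 0}X_{-n}(x)B_{-n}(x)$, a geometric series in $xW_{m}(x)$ with rational data, hence rational. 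The idea you are missing is this relative-growth stratification: it replaces a global description of geodesics by length comparisons between adjacent cosets, and that is what makes the argument go through without ever controlling the ``unbounded wandering'' you flag as the obstacle.
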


\subsection*{Acknowledgments}
The authors would like to thank Moon Duchin, Murray Elder, and Meng-Che Ho. 
MS would like to thank Karen Buck for helping to jump-start some of the neurons used in this work.

\section{Background} \label{sec-background}

\subsection{Spellings and words}

When working with group presentations, we will denote the empty letter
as $\epsilon$, generators with lowercase letters, and inverses of
generators with capital letters.  In the case of $m \leq 3$, we will
write our generators as $a, b,$ and $c$ for simplicity.  
We write $\ell(g)$ to denote spelling length of an expression and 
$\ell_{\calG}(g)$ to denote word length of a group element (which is the minimal spelling length amongst all representatives of a group element).
For example, in $BS(1,3)$, $\ell (a^6) = 6$, while $\ell_{\calG}(a^6) = 4$. 


\subsection{Growth}

For a group $G$ with generating set $\calG$, we denote the sphere of radius $n$ 
as $S_n := \{ g \in G \mid \ell_{\calG}(g) = n \}$, that is, the set of all group elements whose geodesic spelling length is $n$.
Define then the {\em growth function} to be $\sigma(n) =\sigma_{G,\calG}(n)= \# S_n$.
Under the equivalence where two functions $f$ and $g$ are equivalent if there is some $A>0$ such that $g \leq Af(Ax)$ and $f \leq Ag(Ax)$,
the equivalence class of $\sigma$ is a group invariant, which in particular means it does not depend on generating set.
Growth of groups in this context is well studied, in particular it has been long known that a group is virtually nilpotent if and only if it has polynomial growth, 
non-elementary hyperbolic groups have exponential growth, and there exist groups of intermediate growth.
(For an in-depth treatment of the subject,  see \cite{Mann}.)
Among groups of exponential growth, you can define the growth order to be $\omega(G) = \lim \sigma(n)^{1/n}$,
and in fact exponential growth is characterized by $\omega(G) > 1$

From this we can define a slightly different way to measure how a group grows via a power series with coefficients $\sigma(n)$.
Define the {\em (spherical) growth series of $G$  relative to $\calG$} to be $\mathbb{S}(x) =\mathbb{S}_{G,\calG}(x)= \sum_{n=0}^{\infty} \sigma(n) x^n$.
One immediate observation about the growth series is that the radius of convergence for the growth series is precisely $1/\omega(G)$.

\subsection{Rationality}

The question that most often arises when studying growth series is {\em when is the growth series equal to a rational function?}
When a group with a generating set has this property, we say that $(G,\calG)$ has {\em rational growth}.
One important thing to notice is that unlike the rate of  growth, rationality can depend on generating set.
For many years, only hyperbolic groups and virtually abelian groups were known to have rational growth in all generating sets.
Recently,  Duchin and Shapiro proved the same always-rational property for the integer Heisenberg group in \cite{DS},
 but that is currently the only other group for which this is known to be true. 
On the other hand, groups of intermediate growth and recursively presented groups with unsolvable word problem\footnote{An anonymous referee pointed out to MS that the obvious proof here requires the group to be recursively 
presented.  This leaves open the question as to whether there is a group with unsolvable word problem and rational growth. 
An answer either way would be fascinating.}  are easily seen to  not have rational growth for any generating set.
In between those sit some automatic groups, solvable Baumslag-Solitar groups, and Coxeter groups, which all have rational growth with their standard generating set, 
as well as a roster of other groups and families that have rational growth for some generating set.
The higher Heisenberg groups are a family of two-step nilpotent groups which where shown by Stoll in \cite{Stoll} to have
irrational growth in their standard generators and rational growth in another generating set, 
and they provide the only currently known examples of this kind.

In addition to providing a classification of groups and generating sets, information about a group can be read off of the growth series.
In particular, if one has a growth series explicitly as a rational function, then the denominator yields the coefficients of a recurrence relation for the 
function $\sigma(n)$  for large $n$. 
Furthermore, rationality of the spherical growth series is equivalent to rationality of the cumulative growth series (taking instead $\beta(n)$ the number of elements in the ball of radius $n$),
for the simple reason that $\frac{\mathbb{S}(x)}{1-x}  = \mathbb{B}(x)$.
Further information can be read off of the denominator.
For example, $\mathbb{S}(x)$ (or $\mathbb{B}(x)$) has a pole of norm $< 1$ iff the group has exponential growth.

\subsection{Subgroup Growth}
In some cases, we are concerned with how a subgroup grows inside of
another group.  For this, given $H \le G$ , we look at the intersection of $S_n$ and $H$,
and then define $\sigma_H(n) = \# ( S_n \cap H).$ From this we define
the {\em (spherical) subgroup growth series of $H$ in $G$ relative to
  $\calG$ } as $\mathbb{S}_{H \leq G}(x) = \sum_{n=0}^{\infty} \sigma_H (n) x^n$.
$\sigma_H(n)$ captures information on how $H$ is embedded in $G$, for
example, $\sigma_\Z(n)$ for $\Z < BS(1,3)$, as we will see, has
exponential growth as its subgroup growth series has a pole inside the
unit disk, meaning lengths are heavily distorted by the embedding.
Furthermore, we often get information about $\mathbb{S}_G$ from
$\mathbb{S}_{H \leq G}$ because the growth in cosets sums to the full
growth series.  For example, Freden, Knudson, and Schofield build the
growth series of $BS(1,3)$ by finding the subgroup growth of $\Z$ in
\cite{FKS-GBSG1}.

Growth in subgroups can also be evidence of more interesting behavior
as well.  In \cite{Shapiro}, MS considered a central extension of the
surface group of genus 2, and proved that the subgroup $\langle z
\rangle$ grows rationally in $G = \langle x_1, x_2, y_1, y_2, z \mid z
= [x_1, y_1][x_2, y_2] \text{ central} \rangle$.  If we omit $z$ as a
generator, though, $G= \langle x_1, x_2, y_1, y_2 \mid [x_1, y_1][x_2,
  y_2] \text{ central} \rangle$ the central subgroup grows
transcendentally irrationally.  This was early evidence that
rationality depends on generating set, which was ultimately shown by
Stoll's higher Heisenberg group example in \cite{Stoll}.

\subsection{Baumslag-Solitar Groups}
The Baumslag-Solitar groups are the one-relator groups 
$$BS(p, q) = \langle a, t \mid ta^p T = a^q \rangle.$$
These groups have Cayley graphs that are built out of ``bricks" of the form 
\raisebox{-.13 in}{\begin{tikzpicture}[scale=.4] 
\begin{scope}[decoration={markings,  mark=at position 0.55 with {\arrow{>}}} ] 
\draw [postaction={decorate}]  (0,0)-- node [left] {$t$} (0,2) ;
\draw [postaction={decorate}]  (0,2)-- node [above] {$a^p$} (3,2);
\draw [postaction={decorate}]  (3,2) -- node [right] {$t$} (3,0);
\draw [postaction={decorate}]  (0,0)-- node [below] {$a^q$} (3,0);
\end{scope}
\end{tikzpicture}} that fit together to form ``sheets" such as $\{t^n a^i : n,i\in \Z\}$, quasi-isometric to the hyperbolic plane $\H$.
The $t$-weight map $\tau: BS(p,q)\to \Z$ is a homomorphism given by
the total exponent of $t$ in a word, which is independent of
spelling. Its kernel is made up of countably many translates of $\langle a \rangle$.
 Consider one of  these translates $g\langle a \rangle$ and an infinite positive ray
  $r$ from this coset. By this, we mean a path $r$ starting in
  $g\langle a \rangle$ along which $\tau$ is monotone up and
  unbounded.  This ray picks out a {\em half-sheet} in the Cayley graph and
  this half-sheet is quasi-isometric to a horoball in the hyperbolic
  plane. For this reason $\langle a \rangle$ is called the {\em
    horocyclic subgroup} and its translates are seen as horocycles.
Note that Cayley graph can then be built by gluing together countably
many half-sheets.

Baumslag-Solitar groups are a frequent source of examples for group
theoretic properties.  $BS(p,q)$ is solvable (but not nilpotent) iff
$1= p < q$, and $BS(p,q)$ is automatic iff $p = q$.  In the solvable
case, the geometry of the horocyclic subgroup gives rise to a normal
form like that of decimal expansion.  For example, taking $BS(1,10)$,
we have that $t^2 a^5 T a^7 T a^3$ is a spelling of $a^{573}$ that
follows such a normal form.  Indeed, we can see the decimal expression
573 as a shorthand for this normal form with place value notation
recording conjugation by $t$ and numerals such as 7 standing in for
the coset representative $a^7$ of the coset $a^7t\langle a \rangle t^{-1}$ in $\langle a \rangle$.  Studying the growth of $\langle a\rangle\le BS(1,10)$ is close
in spirit to counting how many integers we can enumerate with each
number of digits.

The cases known to have rational growth with standard generators are
the solvable case \cite{BR,CEG} and the automatic case \cite{EJ}.  For
other $BS(p,q)$, both computing the growth series and deciding whether
it is rational are open problems.  One tool for studying their growth
is to study the subgroup growth of the horocyclic subgroup, and then use
this to draw conclusions about the growth of the whole group.
Restricting to the horocycle makes finding geodesics much more
manageable, as there exists an algorithm to produce unique geodesic
representatives for elements in the horocyclic subgroup due to McCann
(for $BS(2,3)$) and Schofield (modified for general $p$ and $q$)
\cite{FKS-GBSG1}.  These McCann-Schofield geodesics always have the
form $t^n a^k s$ where $s$ is a word in $T$, $a$ and $A$, where $t^n$
is called the {\em vertical prefix}, $a^k$ is called the {\em
  horizontal cap}, and $s$ is called the {\em suffix}.  In both the
above cases, the subgroup growth can been computed.  Using these
techniques, Freden, Knudson, and Schofield find that the subgroup has
rational growth when $p \mid q$ in \cite{FKS-GBSG1}, and conjecture
that the subgroup growth in $BS(2,3)$ is irrational.

\subsection{Horospheres}

Our groups (like Baumslag-Solitar groups) come equipped with a
canonical map $\tau : G \to \Z$ which yields the total $t$ weight of a
word, which does not depend on spelling.  For example, $\tau(ta^2
Ta)=0$.  This $\tau$, called the {\em height function},
 is the horofunction associated to the geodesic $\{t^n\}$.  
The kernel of the $\tau$ is a countable union of copies of our horocyclic
subgroup in both our groups and Baumslag-Solitar groups.  We can use
our $\tau$ to measure the maximum height of a spelling $g_1 \cdots g_{n}$
by taking the maximum $\tau$ of successive subwords $g_1 \cdots g_k$.
For example, $tataTaTa$ has height $0$ and maximum height $2$.

\subsection{Formal Languages and Finite State Automata} 
An {\em alphabet} is a finite set.  We refer to its elements as {\em letters} or {\em symbols}. Given 
an alphabet $\mathcal{A}$ we will denote by $\mathcal{A}^*$ the {\em free monoid} on $\mathcal{A}$, 
which the set of all finite strings formed by concatenations of letters in $\mathcal{A}$.
From this we call a subset $\mathcal{L} \subset \mathcal{A}^*$ a {\em formal language} over the alphabet $\mathcal{A}$.
One approach we will take is to build formal languages in bijective correspondance with geodesic normal forms of words in our groups.

Finally, we define a {\em finite state automaton} (FSA) to be an
abstract `machine' consisting of a finite number of states with
transitions labeled by symbols from our alphabet (which in our cases,
are often elements of our group), where some states are denoted as
{\em accept states}, and one state is designated as the {\em start
  state}.  It is convenient to specify an FSA by a labeled directed
graph, where states are vertices drawn as small circles and accept
states are vertices displayed as double circles.  We will adopt the
convention that the start state is labeled by $S$ or indicated by 
the word `start' on the diagram.
For example, we can build an FSA that accepts words in
our language of the positive quadrant of $\Z^2$ by

\begin{center}
\begin{tikzpicture}[->]
\tikzset{vertex/.style = {shape=circle,draw,minimum size=1.5em}}
\tikzset{edge/.style = {->,> = latex'}}

\node[vertex] (s) at  (0,0) {$S$};
\node[vertex] (a) at  (1.5,0) {$a^x$};
\node[vertex,double] (ab) at  (3,0) {$a^x b^y$};
\draw[edge] (s)  to [below] node {$a$} (a);
\draw[edge] (a)  to [below] node {$b$} (ab);

\draw (a) to [loop above]  node {$a$} (a);
\draw (ab) to [loop above]  node {$b$} (ab);

\end{tikzpicture}
\end{center}

We can compute the growth series of the language of an FSA
by taking the geometric series $ \sum_{i \geq 0} v_s^T Ax^i v_a =
v_s^T (I - Ax)^{-1} v_a$, where $A$ is the adjancecy matrix of the FSA
as a directed graph, $v_s$ is the vector with $1$ in the position
corresponding to the start state and zeros elsewhere, and $v_a$ is the
vector with $1$ in the rows corresponding to accept states and zeros
elsewhere.  This is a rational function of $x$ because the entries of a matrix $M^{-1}$ 
are rational functions of the entries of $M$.  If the 
FSA produces a unique geodesic representative for each element of a
group, we can use this method to compute the growth series of the
group.  

Going back to our example, we have precisely that 
$A = \left[
\begin{smallmatrix}
 0 & 1 & 0 \\
 0 & 1 & 1 \\
 0 & 0 & 1 \\
\end{smallmatrix}
\right],$  so that our growth series is 

$$ \left[
\begin{array}{ccc}
1 & 0 & 0 
\end{array}
\right]
\left[
\begin{array}{ccc}
 1 & \frac{x}{1-x} & \frac{x^2}{(1-x)^2} \\
 0 & \frac{1}{1-x} & \frac{x}{(1-x)^2} \\
 0 & 0 & \frac{1}{1-x} \\
\end{array}
\right]
\left[
\begin{array}{c}
0 \\
0\\
1
\end{array}
\right] = \frac{x^2}{(1-x)^2} = x^2 + 2x^3 + 3x^4 + 4x^5 + 5x^6 +  \ldots.$$

\subsection{Generalized Finite State Automata}

We have described FSAs using directed labeled graphs and have
  taken those labels to lie in our alphabet.  It is often convenient
  to take sets of spellings as the labels.  The resulting machines are
  sometimes referred to as {\em generalized FSAs}.  When the edges are
  labeled with regular languages, the resulting language is still
  regular.  One can see this by surgering in FSAs in place of the
  labeled edges.  We will only need the case where the sets are
  finite.  

More formally, a {\em generalized finite state automaton} (gFSA) is a tuple $M = (G,S,Y)$.
Here $G = (V,E,\phi)$ is a labelled directed finite graph where the
labelling $\phi$ assigns to each $e\in E$, a language $L_e$.  $S \in
V$ is the {\em start state}. $Y\subset V$ is the set of {\em accept
  states}.  For each pair of vertices $p,q \in V$, the language
$L_{pq}(M)$ is the set of words $w = u_1 \dots u_k$ where $e_{i_1}
\dots e_{i_k}$ is a path starting at $p$ and ending at $q$, and for
each $1 \le i \le k$, $u_i \in L_{e_i}$.  The {\em language} of $M$ is  
$ L(M) = \cup_{q \in Y} L_{Sq}(M).$
We say that $M$ has the {\em
  unique decomposition property} if for each $w \in L_{pq}(M)$ there
is a unique decomposition $w = u_1 \dots u_k$ with each $u_i$ in the
appropriate $L_e$. We call $k$, the {\em path length} of $w$ and take
$L^k_{pq}(M)$ to be the words of path length $k$ in $L_{pq}(M)$.

Just as we did with FSAs, we can talk about the growth of gFSAs.
For each edge $e\in E$, we let 
$ f_e(x) = \sum_{i=0}^\infty a_{ei} x^i $ 
be the growth of the language $L_e$.  (Here $a_{ei}$ is the number of words
in $L_e$ of length $i$.)
The {\em generalized adjacency matrix} of $M$ is
$$ A_{pq} =  \begin{cases}
f_e(x) &\text{if $(p,q)=e\in E$} \\
0 & \text{otherwise.} \end{cases}$$

\begin{prop}
Let $M$ be a gFSA with the unique decomposition property.
Then for all $p$, $q$, and $k$,  $(A^k)_{pq}$ gives the growth of $L^k_{pq}(M)$.
\end{prop}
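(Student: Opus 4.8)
The plan is to induct on the path length $k$. For $k=0$, the only word of path length $0$ in $L_{pq}(M)$ is the empty word when $p=q$ (and there are none when $p\neq q$), so $L^0_{pq}(M)$ has growth series equal to $1$ if $p=q$ and $0$ otherwise; this matches $(A^0)_{pq}=I_{pq}$. For the inductive step, fix $k\geq 1$ and assume the claim for $k-1$. Every path of length $k$ from $p$ to $q$ factors uniquely as a path of length $k-1$ from $p$ to some intermediate vertex $r$, followed by a single edge $e=(r,q)\in E$. Correspondingly, every word $w\in L^k_{pq}(M)$ decomposes (using the unique decomposition property) as $w=w'u$ with $w'\in L^{k-1}_{pr}(M)$ for a uniquely determined $r$ and $u\in L_e$ for $e=(r,q)$.

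First I would make this decomposition into a genuine bijection. The unique decomposition property guarantees that the map $w\mapsto (w',u)$ is well-defined (the cut point between $w'$ and $u$ is unique), and it is clearly surjective onto $\bigsqcup_{r:\,(r,q)\in E} L^{k-1}_{pr}(M)\times L_{e}$ by concatenation. Injectivity also follows from unique decomposition: if $w'u=w''u''$ as elements of $L^k_{pq}(M)$ with $w',w''$ of path length $k-1$ and $u,u''$ single-edge pieces, then both $(w',u)$ and $(w'',u'')$ are path-length-$k$ decompositions of the same word, hence equal. Under this bijection, lengths add: $\ell(w)=\ell(w')+\ell(u)$. Therefore the generating function counting $L^k_{pq}(M)$ by length is
\[
\sum_{r:\,(r,q)\in E} \bigl(\text{growth of }L^{k-1}_{pr}(M)\bigr)\cdot f_{(r,q)}(x)
= \sum_{r\in V} (A^{k-1})_{pr}\, A_{rq},
\]
where the second equality uses the inductive hypothesis for the first factor, the definition of $A_{rq}$ for the second factor (with $A_{rq}=0$ contributing nothing when $(r,q)\notin E$), and the fact that the number of length-$i$ words in a concatenation of two independent length-additive families is the coefficient convolution, i.e.\ the product of generating functions. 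The right-hand side is exactly $(A^k)_{pq}$, completing the induction.

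The only real subtlety—and the step I expect to need the most care—is verifying that the factorization $w=w'u$ respects the counting without overcounting or undercounting, which is precisely where the unique decomposition property is essential: without it a single word could arise from several choices of $(w',u)$ (or of the intermediate vertex $r$), and the convolution identity would fail. Once that bijection is nailed down, the rest is the standard observation that concatenation of length-additive sets corresponds to multiplication of their growth series, applied entrywise to matrix multiplication. I would also remark that since each $f_e(x)$ is a polynomial (the paper only needs the finite case, where $L_e$ is finite), all the entries $(A^k)_{pq}$ are polynomials, and summing over $k$ and over accept states $q$ recovers a rational growth series for $L(M)$ exactly as in the FSA case via $(I-A)^{-1}$ when the relevant geometric series converges formally.
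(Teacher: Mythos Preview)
Your proof is correct and follows essentially the same approach as the paper: induction on $k$ with the trivial base case $k=0$, and an inductive step that factors off the last edge and uses the unique decomposition property to justify that the growth of the concatenation is the product of the growth series, so that summing over intermediate vertices yields the matrix product $(A^{k-1}A)_{pq}$. Your write-up is somewhat more explicit about the bijection and the role of unique decomposition than the paper's, but the argument is the same.
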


\begin{proof}
The proof is by induction on $k$.  The basis step consists of
observing that for $k=0$, there is exactly one word, namely, the empty
word, in $L^0_{pp}(M)$, while for $p\ne q$, $L^0_{pq} = \emptyset$.
The induction step uses the following fact: Suppose that languages
$L_1$ and $L_2$ have growth functions $f_1(x)$ and $f_2(x)$.  Suppose
also that for each $w \in L_1L_2$ the decomposition $w=u_1 u_2$ is
unique. Then the growth of $L=L_1L_2$ is $f_1(x) f_2(x)$.

Now notice that $L^k_{pq}$ consists of the union of all
$L^{k-1}_{pr}L_{rq}$.  Since $M$ has the unique decomposition
property, for each $r$ the product language has the product growth.
The unique decomposition property also implies that this union is
disjoint.  The result now follows.
\end{proof}
An immediate corollary enables us to compute the growth in the same manner as we did for FSAs. 
\begin{cor}
The growth $f_M(x)$ of $L(M)$ is given by 
$$f_M(x) = \sum_{k=0}^\infty v_s  A^k v_a = v_s (I-A)^{-1}v_a,$$
where $v_s$ is the row vector with 1 as the entry for the
start state and 0s elsewhere and $v_a$ is the column
vector with 1 as the entry for states in $Y$ and 0s elsewhere.\qed
\end{cor}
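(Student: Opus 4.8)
The plan is to treat this as bookkeeping layered on top of the Proposition: the Proposition already counts, for each pair of states and each path length, exactly the words realized along paths of that length, so all that remains is to sum those counts over path lengths and over accept states and to recognize the bookkeeping as the matrix identity $\sum_{k\ge 0}A^k=(I-A)^{-1}$.

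First I would unwind the vectors: by the definitions of $v_s$ and $v_a$, $v_s A^k v_a = \sum_{q\in Y}(A^k)_{Sq}$ for every $k\ge 0$, and by the Proposition each $(A^k)_{Sq}$ is the growth series of $L^k_{Sq}(M)$. Next I would fix an accept state $q$ and observe that the unique decomposition property makes the sets $L^k_{Sq}(M)$, $k=0,1,2,\dots$, pairwise disjoint with union $L_{Sq}(M)$, so that $\sum_{k\ge 0}(A^k)_{Sq}$ is the growth series of $L_{Sq}(M)$. This sum is a legitimate identity of formal power series because the unique decomposition property forbids any directed cycle whose edge languages all contain the empty word; consequently the path length of an accepted word of spelling length $d$ is bounded in terms of $d$ and the number of states, so only finitely many of the $L^k_{Sq}(M)$ contribute to any fixed coefficient. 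Summing over $q\in Y$, and using that $L(M)=\bigcup_{q\in Y}L_{Sq}(M)$ is a disjoint union, I obtain that the growth of $L(M)$ equals $\sum_{q\in Y}\sum_{k\ge 0}(A^k)_{Sq}=\sum_{k\ge 0}v_s A^k v_a$. The same finiteness observation shows the partial sums $\sum_{k=0}^{N}A^k$ converge coefficientwise and that $A^{N+1}\to 0$, so from $\bigl(\sum_{k=0}^{N}A^k\bigr)(I-A)=I-A^{N+1}$ one gets $\sum_{k\ge 0}A^k=(I-A)^{-1}$ over the ring of formal power series, whence $f_M(x)=v_s(I-A)^{-1}v_a$.

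The only place anything beyond the Proposition is needed is the two disjointness assertions: that an accepted word has a unique path length (needed to sum over $k$) and a unique accepting path (needed to sum over $q$). Both are instances of reading the unique decomposition property as a statement about accepted words globally rather than only relative to a fixed pair $(p,q)$; if only the per-pair version is available, I would add the remark that the machines actually used in this paper are deterministic, so that each accepted word pins down its path. Everything else is routine manipulation of formal power series together with the matrix geometric series, so I expect this disjointness discussion to be the only real content of the argument.
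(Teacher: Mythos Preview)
Your proof is correct and is exactly the argument the paper has in mind; the paper states the corollary as ``immediate'' and marks it with a \qed, so you have simply written out the bookkeeping it leaves implicit.  Your care about coefficientwise convergence (ruling out cycles labeled entirely by languages containing the empty word) and about disjointness across accept states goes beyond what the paper says; on the latter point, note that the paper's proof of the Proposition already implicitly uses the global reading of the unique decomposition property when it asserts that the union $\bigcup_r L^{k-1}_{pr}L_{rq}$ is disjoint, so your concern is consistent with how the property is being used throughout.
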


We again compute the growth  of the language of the positive quadrant of $\Z^2$ ,
 but now by using the  gFSA with just a start state $S$ and an accept state $A$ with
edge languages $L_{SS} = L_{AS} = \emptyset$, $L_{SA} = a\{a\}^*b$, and $L_{AA} = b$.

\begin{center}
\begin{tikzpicture}[->]
\tikzset{vertex/.style = {shape=circle,draw,minimum size=1.5em}}
\tikzset{edge/.style = {->,> = latex'}}
\node[vertex] (s) at  (0,0) {$S$};
\node[vertex, double] (a) at  (2,0) {$A$};
\draw[edge] (s)  to [below] node {$a\{ a \}^* b$} (a);
\draw (a) to [loop right]  node {$b$} (a);
\end{tikzpicture}
\end{center}

This clearly has the unique decomposition property, 
and we have the generalized adjacency matrix 
$A = \left[ \begin{array}{cc}
 0 & \frac{x^2}{1-x} \\
 0 & x 
\end{array} \right]$.  Thus our growth series is  

$$ \left[
\begin{array}{cc}
1 & 0 
\end{array}
\right]
\left[ \begin{array}{ccc}
 1 & \frac{x^2}{(1-x)^2} \\
 0 & \frac{1}{1-x} 
\end{array} \right]
\left[
\begin{array}{c}
0 \\
1
\end{array}
\right] = \frac{x^2}{(1-x)^2} = x^2 + 2x^3 + 3x^4 + 4x^5 + 5x^6 +  \ldots.$$
which matches exactly as it did when working with both a CFG and a FSA, but has a much cleaner derivation.

\subsection{Previous results for BS(1,3)} \label{sec-BS13}
This subsection recaps some results of Freden, Knudson, and Schofield in
\cite{FKS-GBSG1} (discussed in \cite{FredenOHGGT}) that our methods generalize.
We start by considering the horocyclic subgroup $\Z$
in BS(1,3). 

\begin{lem} \label{L1}
Let $U=\{ a, a^2, a^3, a^4\}$,  $V = \{ a^2, a^3, a^4 \}$, and $W =
\{ A, \epsilon, a \}$.  Let $L_{1,0} = U$ and for $n>0$ let  $L_{1,n} = t^n V (TW)^n$. 
Then the language $L = L_1 = \bigcup_{n \geq 0} L_{1,n}$ gives a unique geodesic for each $a^i$, $i>0$.
\end{lem}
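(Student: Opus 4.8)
The plan is to show that for each positive integer $i$, the language $L = L_1 = \bigcup_{n\ge 0} L_{1,n}$ contains exactly one word that equals $a^i$ in $BS(1,3)$, and that this word is geodesic. I would organize the argument around the base-$3$ (balanced ternary) expansion of $i$, since the relator $ta^pT = a^q$ with $p=1$, $q=3$ turns conjugation by $t$ into multiplication by $3$: that is, $t a^k T = a^{3k}$, so a word $t^n a^{c_0} T a^{c_1} T \cdots T a^{c_n}$ (read from the $t^n$ prefix inward) evaluates to $a^{c_0 3^n + c_1 3^{n-1} + \cdots + c_n}$. The key numerical fact I would establish first is a normalized ternary-type representation: every positive integer $i$ has a unique expression $i = d_0 3^n + d_1 3^{n-1} + \cdots + d_n$ with $d_0 \in \{2,3,4\}$ (the leading "cap" digit, matching $V$) and each subsequent $d_j \in \{-1,0,1\}$ (matching $W = \{A,\epsilon,a\}$), together with the separate base case $i \in \{1,2,3,4\}$ handled by $U = L_{1,0}$. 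The restriction $d_0\in\{2,3,4\}$ rather than $\{1,2,3,4\}$ is exactly what makes the leading digit unambiguous: if the leading digit were $1$ we could instead absorb it, writing a word one level shorter, so forbidding $1$ as a leading digit (except at level $0$) is what yields uniqueness of $n$ and hence of the whole word.

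Concretely, the first step is the existence-and-uniqueness of this digit string. For existence: given $i > 4$, repeatedly extract the lowest digit $d \in \{-1,0,1\}$ with $d \equiv i \pmod 3$, replace $i$ by $(i-d)/3$, and continue; this terminates once the running value lands in $\{2,3,4\}$, which becomes $d_0$. One checks this always happens (the running value is positive throughout and strictly decreasing until small, and it cannot pass from a value $>4$ directly to $1$ without hitting $\{2,3,4\}$ — this is the small case analysis to verify carefully). For uniqueness, suppose two strings of the prescribed form represent the same $i$; reading from the right, the lowest digits must be congruent mod $3$ and both lie in $\{-1,0,1\}$, forcing them equal unless one string has already terminated — and termination is governed by the value lying in $\{2,3,4\}$, so comparing leading digits rules out the mismatched-length case. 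This gives a bijection between positive integers and words of $L$, hence the "unique word in $L$ representing $a^i$" claim.

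The second step is geodesy: I must show the word $t^n a^{d_0} T a^{d_1} \cdots T a^{d_n}$, of spelling length $n + |d_0| + \sum_{j\ge 1}|d_j| \le n + 4 + n = 2n+4$ (and typically much less), is a shortest representative of $a^i$ in $BS(1,3)$. The standard tool here is the $t$-height/horofunction $\tau$ together with the observation that any word representing a horocyclic element $a^i$ must, reading left to right, go "up" to some maximum height and come back down, and the number of $a^{\pm 1}$ letters available at height exactly $n$ contributes with weight $3^n$ to the final value. One shows that to reach $|i|$ one needs height at least $n$ (the minimal level at which $i$ can be built with the bounded digits), costs at least $2n$ in $t^{\pm}$ letters if height $n$ is used with a return to height $0$ — matching the $t^n \cdots (T\cdots)^n$ shape — and that at each level the digit count $|d_j|$ is the minimum needed. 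This is where I expect the main obstacle: carefully proving no cheaper spelling exists, including ruling out spellings that dip below height $0$ or oscillate, and confirming that the balanced-digit choice (digits in $\{-1,0,1\}$, leading digit in $\{2,3,4\}$) genuinely minimizes $\sum|d_j|$ over all valid positional representations at that height. I would handle this by a counting/potential argument: assign to any spelling the quantity (number of $t^{\pm}$ letters) plus (number of $a^{\pm}$ letters) and show via the max-height decomposition that it is bounded below by the length of our normal-form word, with the ternary-digit minimality lemma doing the work at each fixed level. The base case $i\le 4$ and the single exceptional coincidence near the top digit are finite checks.
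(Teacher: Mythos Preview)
Your plan is sound, but note that the paper itself does not supply a proof of this lemma: immediately after the statement it simply identifies these words as the McCann--Schofield ``mesa path'' geodesics and cites \cite{FKS-GBSG1}, taking the result as known background for $BS(1,3)$.  Later, in the proof of Theorem~\ref{LanguageTheorem}, the $m=1$ case is again handled by appeal to McCann--Schofield, with the parenthetical remark that ``it can also be shown through an induction on the height of the geodesic.''  So what you are proposing---a self-contained argument via a normalized balanced-ternary expansion plus a height-based minimality argument---is more than the paper does, and is essentially the content that underlies the cited result.

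Your existence/uniqueness step for the digit string is correct as outlined; the observation that a value $>4$ cannot descend to $1$ in one step (since $(v-d)/3=1$ forces $v\in\{2,3,4\}$) is exactly the small check needed, and comparing lowest digits mod $3$ gives uniqueness, with the length-mismatch case ruled out because any representation with $n\ge 1$ has value at least $(3^{n+1}+1)/2\ge 5$.  The part that genuinely requires care is, as you say, geodesy: you must rule out spellings that oscillate in height or dip below zero, and show that among all positional representations at a given maximum height the balanced-digit choice minimizes total length.  The induction-on-max-height argument the paper carries out for general $m$ in Theorem~\ref{LanguageTheorem} (showing a $T$ cannot precede a $t$ in a geodesic, then pushing the spelling into mesa form) is a template you could specialize to $m=1$ to close this gap cleanly, and would likely be less delicate than a raw potential-function bound.
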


\begin{minipage}{3in}
\begin{center}
\begin{tikzpicture}
\draw[thick] (0,0) -- (0,1) node[left]{$t^n$} -- (0,2)  -- (1.5,2) node[above]{$V$} -- (3,2) -- (3,1.5) node[right]{$(TW)^n$} -- (3,1);
\draw[thick] (3,1) -- (2.66,1) -- (2.66,0);
\draw[thick] (3,1) -- (3,1) -- (3,0);
\draw[thick] (3,1) -- (3.33,1) -- (3.33,0);
\draw[thick] (2.84,0) -- (3.16,0);
\draw[thick] (2.5,0) -- (2.82,0);
\draw[thick] (3.18,0) -- (3.49,0);
\end{tikzpicture}

\bigskip

Spellings in $L_{1,n}$ for words in $\{a^i\}$\\ displayed in one sheet of $BS(1,3)$.

\end{center}
\end{minipage}
\quad 
\begin{minipage}{3in}
\[ \begin{array}{clr}
a^i & \text{Spelling}  & \ell_{\calG} (a^i) \\  \hline \hline
a & a & 1 \\ 
a^2 & a^2 & 2 \\
a^3 & a^3 & 3 \\
a^4 & a^4 & 4 \\ 
a^5 & ta^2TA & 5 \\
a^6 & ta^2T & 4 \\
a^7 & ta^2Ta & 5 \\
a^8 & ta^3TA & 6 \\
a^9 & ta^3T & 5 \\
a^{10} & ta^3Ta & 6 \\
a^{11} & ta^4TA & 7 \\
a^{12} & ta^4T & 6 \\
a^{13} & ta^4Ta & 7 \\ 
\vdots & \vdots & \vdots 
\end{array} \]
\end{minipage}

These are precisely the McCann-Schofield geodesics (sometimes called
`mesa paths'), and each $L_{1,n}$ spells every word $a^i$ for $i \in
\left[\frac{3^{n+1} + 1 }{2}, \frac{3^{n+2} -1 }{2} \right]$.
Freden \textit et al. \cite{FKS-GBSG1} exhibit a context free 
grammar for this language of geodesics. (For background on
  context free grammars and their automata see \cite{HU}.)  From this
context free grammar, the growth series of $\Z_+$ can computed by the
Delest-Sch\"utzenberger-Viennot (DSV) method \cite{CS} to be $P_1(x) =
\frac{x +x^2 - x^4}{1 - x^2 - 2x^3} $.  This makes the growth of $\Z$
equal to $1 + 2 P_1(x)$, which implies

\begin{prop}[\cite{FKS-GBSG1}]
The subgroup growth series of $\Z$ in $BS(1,3)$  is rational.
\end{prop}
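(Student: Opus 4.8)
The plan is to reduce the rationality of the subgroup growth series of $\Z \le BS(1,3)$ to the statement of Lemma~\ref{L1} together with the structure of the languages $L_{1,n}$. First I would observe that, since $\Z = \langle a \rangle$ is abelian, its sphere $S_n \cap \Z$ decomposes into three pieces: the identity (contributing $1$ to $\sigma_\Z(0)$), the positive powers $\{a^i : i > 0\}$, and the negative powers $\{a^i : i < 0\}$. The map $a \mapsto a^{-1}$ is a length-preserving bijection between the positive and negative powers (it extends to an automorphism of $BS(1,3)$ fixing $t$, or one can simply invert the McCann-Schofield spelling), so $\sigma_\Z(n) = \sigma_{\Z_+}(n) + \sigma_{\Z_-}(n)$ for $n \ge 1$ with $\sigma_{\Z_+}(n) = \sigma_{\Z_-}(n)$, while $\sigma_\Z(0) = 1$. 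Hence $\mathbb{S}_{\Z \le BS(1,3)}(x) = 1 + 2 P_1(x)$, where $P_1(x) = \sum_n \sigma_{\Z_+}(n) x^n$ is the growth series of the positive powers.

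Next I would show $P_1(x)$ is rational by exhibiting it as the growth series of a language that biuniquely names the positive powers by geodesics. By Lemma~\ref{L1}, the language $L = \bigcup_{n \ge 0} L_{1,n}$ does exactly this: each $a^i$ with $i > 0$ has exactly one spelling in $L$, and that spelling is geodesic, so the number of $a^i$ with $\ell_{\calG}(a^i) = n$ equals the number of words in $L$ of length $n$. Thus $P_1(x)$ equals the growth series of the formal language $L$. Now $L_{1,0} = U$ is finite, and for $n > 0$, $L_{1,n} = t^n V (TW)^n$, where $V$ and $W$ are finite sets. This is naturally the language of a gFSA: a start state, an accept state for the $n=0$ case absorbing $U$, and for $n>0$ a small machine that reads $t$'s (tracking the prefix length), then a cap from $V$, then $TW$-blocks whose count is forced to match. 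The one subtlety is that the vertical prefix length and the number of suffix blocks are coupled, so the naive ``$t$-loop then $TW$-loop'' automaton is not unique-decomposition correct on its own; one can handle this exactly as Freden--Knudson--Schofield do, via the context-free grammar they give, and then invoke the DSV method \cite{CS}, which is precisely the route the excerpt already records, yielding $P_1(x) = \frac{x + x^2 - x^4}{1 - x^2 - 2x^3}$. Either way, a rational function emerges because all the combinatorial data ($U$, $V$, $W$, the coupling) is finite.

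Finally, I would assemble the pieces: $\mathbb{S}_{\Z \le BS(1,3)}(x) = 1 + 2 P_1(x) = 1 + \frac{2(x + x^2 - x^4)}{1 - x^2 - 2x^3}$, a rational function, which proves the proposition. The main obstacle in a fully detailed write-up is not the algebra but justifying that the coupled grammar/automaton for $\bigcup_n t^n V (TW)^n$ genuinely has the unique decomposition (or unambiguous derivation) property, so that the growth series of the language is computed correctly — this is the content of the DSV step and is already established in \cite{FKS-GBSG1}, so here it can simply be cited. Everything else (the $i \mapsto -i$ symmetry, the $+1$ for the identity, and reading off rationality from the explicit formula) is routine.
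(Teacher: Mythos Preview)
Your proposal is correct and follows essentially the same route the paper summarizes: use Lemma~\ref{L1} to get a geodesic language for $\Z_+$, invoke the context-free grammar and DSV method from \cite{FKS-GBSG1} to obtain $P_1(x) = \frac{x + x^2 - x^4}{1 - x^2 - 2x^3}$, and then assemble $\mathbb{S}_{\Z \le BS(1,3)}(x) = 1 + 2P_1(x)$ via the $a \mapsto a^{-1}$ symmetry. The only remark worth adding is that the coupling between the $t^n$ prefix and the $(TW)^n$ suffix can also be handled without CFGs, via the length-preserving bijection $t^n(TW)^n \leftrightarrow (tTW)^n$ to a regular language (this is the content of Remark~\ref{rem-bijectsToRegular}); either device suffices here.
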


Finally, using the a branching recursion, they conclude 

\begin{thm}[\cite{CEG, FKS-GBSG1}]
The growth series of $BS(1,3)$  is rational.
\end{thm}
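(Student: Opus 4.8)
The plan is to package the whole group around its tree of cosets of $\langle a\rangle$, which is the Bass-Serre tree $T$ of the ascending HNN extension $BS(1,3)=\Z*_{g\mapsto g^3}$ and which inherits a height function from $\tau$. Since the edge group $\langle a\rangle$ embeds into the vertex group $\langle a\rangle$ in two ways, with images of index $1$ and $3$, every vertex of $T$ has a unique \emph{upward} neighbor, along which $\tau$ increases, and exactly three \emph{downward} neighbors; the upward rays all converge to a single end fixed by $BS(1,3)$, so any two cosets have a well-defined highest common vertex. Write $C_0=\langle a\rangle$, at height $0$, and for $g\in BS(1,3)$ let $d(g)\ge 0$ be the depth of the deepest coset visited by a height-optimal geodesic from $1$ to $g$.

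First I would establish a geodesic normal form, extending Lemma~\ref{L1} from the coset $C_0$ to the whole group: every $g$ has a unique geodesic of the form $\omega_{\downarrow}\,\omega_{\uparrow}$, where $\omega_{\downarrow}$ is a monotone descent (a $T$-staircase $a^{k_0}Ta^{k_1}T\cdots Ta^{k_d}$ with $d=d(g)$ and uniformly bounded exponents $|k_i|$) ending at a group element $h$ at height $-d(g)$, and $\omega_{\uparrow}$ is a McCann-Schofield-type geodesic from $h$ to $g$ of the shape $t^n v s$ (vertical prefix, cap from a fixed finite set, descending suffix) that never drops below height $-d(g)$. Boundedness of the $|k_i|$ comes, exactly as for $BS(1,3)$ in the solvable Baumslag-Solitar analysis of \cite{BR,CEG}, from the fact that a long enough horizontal run can always be shortened by trading it for an extra up-down excursion (using $a^{3m}=t\,a^{m}\,t^{-1}$ to replace $3|m|$ letters by $|m|+2$), so it cannot lie on a geodesic; uniqueness of the decomposition, and the absence of cancellation among $\omega_{\downarrow}$, the prefix, the cap, and the suffix, is the same bookkeeping that pins down the McCann-Schofield geodesics themselves.

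Given the normal form, the language of these geodesics is the language of a generalized finite state automaton with the unique decomposition property. The automaton has a \emph{descending spine}: reading one block $Ta^{k_i}$ (or the initial block $a^{k_0}$) moves along a fixed finite gadget whose generalized adjacency block $M(x)$ lies over $\Z[x]$ and records the three downward branches together with their bounded horizontal decorations; from any state of the spine the machine may pass into an \emph{ascending gadget} that reads a word $t^n v s$ and whose growth is governed by the positive series $P_1(x)=\frac{x+x^2-x^4}{1-x^2-2x^3}$ of \cite{FKS-GBSG1} and the finite cap set. By the corollary on gFSA growth, the growth series of $BS(1,3)$ is $v_s(I-A)^{-1}v_a$ for the resulting block generalized adjacency matrix $A$; the inversion automatically performs the geometric sum $\sum_{d\ge 0}M(x)^d=(I-M(x))^{-1}$ over the descent depth, which is the branching recursion, and since the inverse of a matrix over $\Z[x]$ has rational-function entries the growth series is rational.

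The crux is the normal-form step: showing that the ``descend to the deepest coset, then run a McCann-Schofield geodesic back up'' words are simultaneously geodesic, unique, and exhaustive, and in particular that the horizontal runs inside the descent are uniformly bounded so that the descending spine is genuinely finite-state. Once this combinatorial core (in effect the geodesic geometry of $BS(1,3)$) is in place, the passage from the normal-form language to a gFSA, and hence to a rational function, is the routine computation illustrated earlier.
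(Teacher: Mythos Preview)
There is a genuine gap in the normal form you propose. First, a small orientation issue: in $BS(1,3)=\langle a,t\mid tat^{-1}=a^3\rangle$ each coset $g\langle a\rangle$ has \emph{three} neighbours reached via $t$ (since $a^kt\langle a\rangle=a^jt\langle a\rangle$ iff $3\mid k-j$) and only \emph{one} reached via $T$ (since $a^kT=Ta^{3k}$), so the unique-neighbour direction is the one along which $\tau$ \emph{decreases}, and the distinguished end is $T^\infty$. More importantly, the template ``bounded $T$-staircase down, then a McCann--Schofield word staying above $-d(g)$'' does not produce geodesics. Take $g=a^5T=Ta^{15}$, so $\tau(g)=-1$ and $d(g)=1$. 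Your form gives $\omega_\downarrow\in\{a^{k_0}Ta^{k_1}\}$ followed by a McCann--Schofield word for some $a^m$; the best this achieves is $|T|+|a^{15}|=1+7=8$ (or $|aT|+|a^{12}|=2+6=8$), yet the actual geodesic is $ta^2TAT$ of length $6$, which goes \emph{up} first and descends last. This is not an isolated accident: one checks that $\ell(T^n g\,Tu)=\ell(T^n g)+1+\ell(u)$ for every $u\in W$, so geodesics for elements of $T^n\langle a\rangle$ have the shape \emph{McCann--Schofield word followed by a $T$-staircase}, not the reverse. The upshot is that the relative growth of the coset at level $-n$ is $(W_1(x))^n$ times the subgroup growth, not the subgroup growth itself, and a single fixed ``ascending gadget'' governed by $P_1(x)$ cannot serve all levels simultaneously.

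The paper sidesteps any global normal form. It assigns each coset a \emph{level} (zero if $\tau\ge0$, else $\tau$), proves that the relative growth at level $-n$ is $B_{-n}(x)=(W_m(x))^nB_0(x)$ with $B_0=\Sm{m}$ the already-computed subgroup growth, and separately shows that the generating function $X_{-n}(x)$ counting cosets at level $-n$ by distance satisfies $X_{-n}=x^{n-1}X_{-1}$ for $n\ge1$, with $X_{-1}$ and $X_0$ rational. Then $\mathbb S(x)=\sum_{n\ge0}X_{-n}(x)B_{-n}(x)$ is a geometric series in $xW_m(x)$ and sums to a rational function. If you want to recast this as a gFSA, the machine must read an $L_1$-type word \emph{first} and a $(TW)^*$-tail \emph{afterwards}, together with a separate ``up-tree'' piece reading words in $(Wt)^*$ to reach the non-identity level-$0$ cosets; the paper's two lemmas on $B_{-n}$ and $X_{-n}$ are exactly the bookkeeping that makes such a machine have rational growth.
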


We will similarly exhibit a language of geodesics for the horocyclic subgroups of $\HNN$
for arbitrary $m$, but will compute growth without the use of a context free grammar.
The idea for $BS(1,3)$ is as follows:

\begin{remark}\label{rem-bijectsToRegular}
Consider the language
\[
R = \{t^n (TW)^n \mid n > 0 \}.
\]
Clearly $L$ is in length preserving bijection with the language $U
\cup V R$.  Using $U(x)$, $V(x)$ and $R(x)$ to denote the growth of
$U$, $V$ and $R$, it follows that the growth of $\Z_+$ is $P_1(x) = U(x) + V(x) R(x)$.  Now $R$
is in length-preserving bijection with 
\[
R' = \{(tTW)^n \mid n > 0 \}.
\]
and this latter is regular.  Thus, if we are willing
to pass to a language which no longer gives geodesics in the group, we
can count the subgroup growth of $\Z \subset BS(1,3)$ using generalized finite
state automata.
\end{remark}

This method is precisely how we will show rationality for all $m$.
To start, though, we first need to describe a language for the horocyclic subgroup at each $m$.

\section{A Language of Geodesics for the $\Z^m$ Subgroup}\label{sec-lang}

In this section we will we consider $\Z^m < \Z^m *_{g\mapsto
  g^3}$. We will extend the methods of the previous section to produce
a language $L_m$ giving geodesics for the positive orthant.  The spellings in
our language consist of a power of $t$ followed by a cap which is
followed by a suffix.

\subsection{The families of sets $W_m$, $U_m$, and $V_m$} 
To define our language, we define the various sets necessary to build the caps and suffixes.

\begin{defn}
For each $m$,  let $W_m$ be the concatenation $ W_m := \{a_1,\epsilon,A_1\}  \cdots \{a_m,\epsilon,A_m\}$.
Equivalently,  we can write this as $W_m = \{ a_1^{e_1} \dots a_m^{e_m} \mid e_i \in \{-1,0,1\} \}$.
\end{defn}

Notice that there is the relationship $W_m = W_{m-1} \{ \epsilon, a_m, A_m \}$.
We will see that our suffixes will be elements of $(TW_m)^*$,
and our prefixes will be the corresponding elements of $\{ t \}^*$ such that the total height $\tau$ is $0$.

\begin{defn}
For each $m$, $U_m := \{a_1 \cdots a_m  \}$ is the set consisting of the shortest word in the positive  orthant.
\end{defn}

The word $a_1 \cdots a_m$ has length $m$ and is best spelled with no conjugations.
It is not the only one to be best spelled with only $a_i$, but it will be notationally convenient to single it out,
particularly when building the set of caps.

\begin{defn}
For each $m$, $V_m := \left( U_m^2 \bigcup (tU_m TW_m) \right) \setminus (tU_mTU_m^{-1})$
 is set consisting of top dimensional caps.
\end{defn}
We can think of $V_m$ as the set consisting of the all squares word from $U_m^2 =\{ a_1^2 \cdots a_m^2 \}$,
along with all conjugations of the word in $U_m$, except the one evaluating to the all squares word $a_1^2 \cdots a_m^2$.
This set is top dimensional in the sense that all $m$ letters from $\Z^m$ are present in it.
The important thing to notice is that $V_m$ is a set of geodesic spellings of words with exponents either $2$, $3$ or $4$.
For example, $ V_2 = \{a^2b^2, tabT, tabTA, tabTB, tabTa, tabTb, tabTab, tabTAb, tabTaB \}$.

Unlike the language we built previously for the $BS(1,3)$ case, our one dimensional caps 
are $V_1 = \{ a^2, taT, taTa \}$ and  not $\{a^2, a^3, a^4 \}$.
The reason for fussing with conjugations and the all squares words rather than just taking concatenations of powers of $2$, $3$, and $4$
is that for $m > 1$ only the all squares word remains a geodesic spelling of itself.  
For example, $a^3 b^3$ is not a geodesic because $tabT$ spells it in fewer letters.

These three families of sets will be sufficient to build our language, 
but we must now concern ourselves with how we piece them together.
In particular, while it is true that top dimensional caps from $V_m$ will be caps for spellings in $L_m$, 
not all caps used by $L_m$ will be top dimensional.

\subsection{Defining the language $L_m$} 
First we define the language $L_1$ of $\Z$ in $BS(1,3)$ to be 
$L_1 = U_1 \bigcup_{n \geq 0} L_{1,n}$, where 
$U_1 = \{ a \}$, and for each $n\geq 0$,  
$L_{1,n} = t^n V_1 (TW_1)^n$, 
with  $V_1 = \{ a^2, taT, taTa \}$ and $W = \{ A, \epsilon, a \}$ as defined in the previous subsection.
This is a modified version of the language constructed in Lemma \ref{L1},
where now we are taking caps $\{a^2, taT, taTa \}$ instead of $\{a^2, a^3, a^4\}$ . 
The lemma implies immediately that this language $L_1$ spells every word in $\Z_+$ uniquely and geodesically.

For $L_1$, we only needed caps from $V_1$. 
For higher $m$, there is a problem in that if we just conjugate caps from $V_m$, we couldn't possibly spell words 
that project to words spelled by different $L_{1,n}$s in their respective $BS(1,3)$ subgroups.
For example, consider $a^5 b^4$: the element $a^5$ 
is best spelled as $ta^2TA \in L_{1,1}$, but $b^4$ is best spelled $tbTb \in L_{1,0}$.
The smallest cap we have is in $V_2$ is $a^2 b^2$, which conjugates to $a^6 b^6$.
Since elements of $W_2$ can only decrease the powers by at most one, we cannot spell $a^5 b^4$.
To geodesically spell $a^5 b^4$, we would like to combine the two spellings to get $ta^2 b TAb$,
which should be thought of as taking the cap $a^2 \in V_1 $ and adding on the $U_1$ word $b$, 
then conjugating via the suffix $TAb \in TW_2$. 
This demonstrates that we do not just need top dimensional caps, but all possible lower dimensional caps 
concatenated with $U$ sets as well.
We will need to use partitions and permutations to properly utilize lower dimensional caps.

\begin{defn}
For a pair of positive integers $(m,n)$, 
denote by $(I, J)$ a pair of ordered partitions 
$I = (i_1, \ldots, i_q)$ and $J = (j_1, \ldots , j_q)$
where $I$ is a partition of $m$ into positive integers ($ i_1 + \cdots + i_q = m$),
and $J$  is a partition of the same length of $n$ into non-negative integers ($j_1 + j_2 + \ldots + j_q = n$).
Furthermore, denote by $\Delta i_k$ the range $\{i_{k-1}+1, \ldots, i_{k-1}+i_k \}$, 
with the convention $i_0 = 0$.
\end{defn}

We can think of $(I,J)$ as a partition of $(m,n)$ into an ordered set
of pairs $(i_k, j_k)$ such that the $i$ sum to $m$ and the $j$ sum to
$n$.  The $(I,J)$ partitions will help keep track of how many letters
have been seen and at what height.  We now have to worry not just how
many letters are present in a set, however, but {\em which} letters
are present in a set.  To do so, we define a new subscript as below:

\begin{defn}
Denote by $\Sigma_m$ the group of permutations of our $m$ generators. 
For a permutation $\phi : \{a_1, \ldots, a_m \} \to \{a_1, \ldots, a_m \}$,
Let $X \subset \{a_1, \ldots, a_m \}$ so that $\phi(X)$ is its image,
and denote $W_{\phi(X)}$, $U_{\phi(X)}$, and $V_{\phi(X)}$
to be the images of the set $W_{|X|}$, $U_{|X|}$, $V_{|X|}$
 under the permutation.

\end{defn}

This now gives us a way to keep track of what letters are contained in our sets.
For example, in the $m=2$ case, taking $\phi = Id$ and $X = \{a\}$, 
we can speak of $U_a = \{ a\}$ as the image of $U_1$, but taking $X = \{a, b\}$, we have $U_{ab} = \{ab\}$ as the image of $U_2$.
On the other hand if $\phi$ permutes $a$ and $b$, with $X = \{a \}$, we have $U_{b} = \{ b \}$.

\begin{defn} \label{def:language}
For $n \geq 0$,  let

$$ L_{m,n} =    \bigcup_{ \phi \in \Sigma_m} \ \bigcup_{\text{Partitions } (I,J) \text{ of } (m,n) }
t^n  V_{\phi( \Delta i_1) } (TW_{\phi(\Delta i_1)}   )^{j_1} 
U_{\phi( \Delta i_2)} (TW_{\phi(\Delta i_1) \cup \phi(\Delta i_2) }   )^{j_2}
  \cdots U_{\phi( \Delta i_q)} (TW_{m}   )^{j_q}. $$

and finally define, define $L_m = U_m \bigcup_{n \geq 0} L_{m,n}$.  
\end{defn}

There is a concise description of $L_m$ as the
  intersection of a regular language with the requirement that $\tau =
  0$.  We define a generalized finite state automaton $M$ as follows.
  The states of $M$ are $\calP \cup \{ U\}$ where $\calP$ denotes
  the power set of $\{1,\dots,m\}$, and $U$ is a special state corresponding to $U_m$. 
  The start state is $\emptyset$. There are two accept states,
  $\{1,\dots,m\}$ and $U$. There is an edge from $\emptyset$ to $U$ labeled 
  by the singleton set $U_m$, and a loop at $\emptyset$ 
  labeled by $t$.  For each $X \in \calP \setminus \emptyset$, there
  is an edge from $\emptyset$ to $X$ labeled by $V_X$.  For
  each $X \neq \emptyset$, there is a loop at $X$ labeled by $TW_X$.
  For each $\emptyset \neq X \subsetneq X'$, there is an edge from
  $X$ to $X'$ labeled by $U_{X'\setminus X}$.  For a word
  labeling a path in this generalized FSA, the initial $V$ and
  subsequent $U$s determine the partition of $m$ and the permutation
  $\phi$ in the definition of $L_m$. The number of times that word
  traverses each $V_X$ edge and loop determines $n$ and the partition
  of $n$.  The requirement that $\tau = 0$ ensures that this is
  preceded by $t^n$. 
The gFSA in the $m=2$ case is illustrated in Figure \ref{fig:Z2Machine}.

\begin{figure}[ht]
\begin{tikzpicture}[->]
\tikzset{vertex/.style = {shape=circle,draw,minimum size=1.5em}}
\tikzset{edge/.style = {->,> = latex'}}


\node[vertex, double] (U) at  (0,-2) {$U$};

\node[vertex] (empt) at  (0,0) {$\emptyset$};
\node[vertex] (a1) at  (3,2) {$\{a\}$};
\node[vertex] (b1) at  (3,-2) {$\{b\}$};
\node[vertex, double] (ab1) at  (6,0) {$\{a, b\}$};
\node (start) at  (-.75,0) {Start};
\node (accept1) at  (-1,-2) {Accept};
\node (accept2) at  (7.25, 0) {Accept};

\draw[edge, orange] (empt)  to [left] node {$U_{ab}$} (U);

\draw (empt) to [loop above]  node {$t$} (empt);
\draw (a1) to [loop above]  node {$TW_a$} (a1);
\draw (b1) to [loop above]  node {$TW_b$} (b1);
\draw (ab1) to [loop above,right]  node {$TW_{ab}$} (ab1);

\draw[edge,blue] (empt)  to [left] node {$V_a$} (a1);
\draw[edge,green] (empt)  to [left] node {$V_b$} (b1);
\draw[edge, orange] (empt)  to [above] node {$V_{ab}$} (ab1);

\draw[edge,blue] (a1)  to [right] node {$U_b$} (ab1);
\draw[edge,green] (b1)  to [right] node {$U_a$} (ab1);
\end{tikzpicture}
\caption{The intersection of the language produced by this machine with $\tau = 0$ yields $L_2$,
where  
{\color{green}  $U_a = \{a \}$},
{\color{blue} $U_b = \{b\}$ },
{\color{orange} $U_{ab} = \{ab \}$}, 
{\color{blue} $V_a = \{ a^2, taT, taTa \}$}, 
{\color{green} $V_b = \{ b^2, tbT, tbTb \}$}, 
{\color{orange} $V_{ab} = \{a^2b^2, tabT, tabTA, tabTB, tabTa, tabTb, tabTab, tabTAb, tabTaB \}$},
$W_a = \{ \epsilon, a, A \}$,
$W_b = \{ \epsilon, b, B\}$, and
$W_{ab} = \{ \epsilon, a, A, b, B, ab, Ab aB, AB\} $ } 
\label{fig:Z2Machine}
\end{figure}

\begin{prop}
 $L_{m,n}$ spells words in 
$\left[0, \frac{3^{n+2} -1 }{2} \right]^m \setminus \left[0,\frac{3^{n+1} + 1 }{2} \right)^m \subset \Z_+^m$.   Spellings in $L_{m,n}$ have a max height of either $n$ or $n+1$.
\end{prop}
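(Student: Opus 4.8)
The plan is to read off from the data $\phi$, $I$, $J$ and the chosen cap and $W$-entries defining a word $w\in L_{m,n}$ exactly which horocyclic element it names, and then to observe that its height profile is forced by the syntax. For the first part the tool is the base-$3$ computation behind the example $a^{573}=t^2a^5Ta^7Ta^3$: because the base group is abelian, a word of the form $t^N g_0\,T g_1 T\cdots T g_N$ with each $g_l=a_1^{d_1^{(l)}}\cdots a_m^{d_m^{(l)}}$ horocyclic names $\prod_i a_i^{c_i}$ with $c_i=\sum_{l=0}^{N}d_i^{(l)}3^{N-l}$. So first I would rewrite an arbitrary $w\in L_{m,n}$ into this shape: absorbing the single internal $t,T$ carried by a cap of the form $tU_XTw$, and using that the $U$-blocks and the $W$'s all sit in the base group, puts $w$ into $t^{n}g_0T\cdots Tg_n$ (or $t^{n+1}g_0T\cdots Tg_{n+1}$, with a forced $0$-digit, when the cap carries a $t$). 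Tracking which coordinates are \emph{active} at each level then shows $w=a_1^{c_1}\cdots a_m^{c_m}$, where a coordinate $i\in\phi(\Delta i_k)$ is seeded in the cap (if $k=1$, by a head value in $\{2,3,4\}$, occupying one or two digits) or in the $k$-th $U$-block (if $k\ge2$, by a single digit $1$), and thereafter receives exactly one balanced digit from $\{-1,0,1\}$ out of each of the $M_k:=j_k+\cdots+j_q$ suffix blocks $(TW_\bullet)$ lying below it. Since $W_X=\{a_1^{e_1}\cdots a_{|X|}^{e_{|X|}}\mid e_j\in\{-1,0,1\}\}$ is the full product set, these digit choices are independent across coordinates, and (writing $M_1=n$, using Lemma~\ref{L1} for the block-$1$ range and balanced ternary for the rest) one obtains
$$c_i\in\Big[\tfrac{3^{n+1}+1}{2},\,\tfrac{3^{n+2}-1}{2}\Big]\ \ (i\in\phi(\Delta i_1)),\qquad c_i\in\Big[\tfrac{3^{M_k}+1}{2},\,\tfrac{3^{M_k+1}-1}{2}\Big]\ \ (i\in\phi(\Delta i_k),\ k\ge2).$$

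Containment is then immediate: every $M_k\le M_1=n$, so each $c_i\le\tfrac{3^{n+2}-1}{2}$; and $I$ is a partition of $m$ into positive parts, so $\phi(\Delta i_1)\ne\emptyset$ and at least one $c_i\ge\tfrac{3^{n+1}+1}{2}$, placing $w$ in the asserted set. (One also reads off $c_i\ge1$ for every $i$, with $c_i=1$ exactly when $i$ lies in a block with $M_k=0$; so $L_{m,n}$ actually lands in the strictly positive orthant, and the $0$'s in the statement should be read as $1$'s, or the statement read as a containment.) For the converse — which is what is needed afterwards so that $L_m=U_m\cup\bigcup_n L_{m,n}$ hits all of $\Z_{>0}^m$ — given a target $c=(c_1,\dots,c_m)$ with every $c_i\in[1,\tfrac{3^{n+2}-1}{2}]$ and some $c_i\ge\tfrac{3^{n+1}+1}{2}$, put $S:=\{i\mid c_i\ge\tfrac{3^{n+1}+1}{2}\}$ (nonempty) into block $1$: by the $m=1$ case each $c_i$, $i\in S$, is realized by a head value in $\{2,3,4\}$ together with $n$ balanced digits, and these head values fit into a single element of $V_{\phi(\Delta i_1)}$ (the all-squares cap if all of them equal $2$, otherwise the conjugated cap $tU_{\phi(\Delta i_1)}Tw$ with $w$ the $W_{\phi(\Delta i_1)}$-word whose $a_i$-exponent is three less than the head value chosen for $i$ — legal, since only the all-$(-1)$ cap is excluded). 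For $i\notin S$, let $M(c_i)$ be the unique $M$ with $c_i\in[\tfrac{3^M+1}{2},\tfrac{3^{M+1}-1}{2}]$ (so $0\le M(c_i)\le n$); group these coordinates into blocks by common value of $M$, order the blocks by decreasing $M$-value, and read off $I$, $\phi$ and $J$ via $j_k=M_k-M_{k+1}$ (which telescopes to $\sum_k j_k=M_1=n$, a legal partition of $n$). Balanced ternary then realizes each $c_i$ inside its block, producing a word of $L_{m,n}$ that names $c$.

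The height statement is a direct check on the spelling $t^n v s$, with $v$ the cap and $s$ the suffix: $t^n$ raises the height to $n$; the cap $v$ either holds it at $n$ (all-squares cap) or bumps it to $n+1$ via its lone $t$ and comes back to $n$; and $s$ consists of $T$'s, each dropping the height by $1$, interleaved with base-group letters (the $W$'s and $U$'s) that leave the height unchanged, so the height is non-increasing from the end of $v$ down to $0$. Hence the maximum height is $n$ for an all-squares cap and $n+1$ otherwise, and in every case lies in $\{n,n+1\}$. I expect the only real work to be the first step: arranging the base-$3$ bookkeeping so that the interlocking roles of the cap (which seeds only the coordinates of $\phi(\Delta i_1)$), the intermediate $U$-blocks (each seeding its newly-activated coordinates with the digit $1$) and the nested $W$-blocks (each contributing one balanced digit to every already-active coordinate) compose to exactly the claimed $c_i$ with the claimed, mutually independent ranges; the bounds, the surjectivity construction, and the height count are all routine once that is in place. (Alternatively, that $w$ represents a horocyclic element follows structurally from the observation that the path $w$ traces in the Bass--Serre tree returns to the base vertex — it climbs $n$ edges, makes a detour of at most one edge near the top vertex, then descends $n$ edges along the unique downward path — but the direct computation is self-contained and hands back the exponents.)
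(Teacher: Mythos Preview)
Your proposal is correct and follows the same underlying idea as the paper's proof---reduce to the one-dimensional case by projecting onto coordinates and invoking Lemma~\ref{L1}---but you carry it out in considerably more detail. The paper's proof is a three-line sketch: it observes that the coordinate(s) seeded by the cap $V_{\phi(\Delta i_1)}$ project to elements of $L_{1,n}$, hence lie in $[\tfrac{3^{n+1}+1}{2},\tfrac{3^{n+2}-1}{2}]$, while the remaining coordinates project to some $L_{1,i}$ with $i\le n$ and so lie in $[0,\tfrac{3^{n+2}-1}{2}]$. You instead make the base-3 bookkeeping fully explicit, tracking for each block $k$ the number $M_k=j_k+\cdots+j_q$ of balanced digits a coordinate in $\phi(\Delta i_k)$ receives, and you obtain the sharper per-block ranges $[\tfrac{3^{M_k}+1}{2},\tfrac{3^{M_k+1}-1}{2}]$ for $k\ge 2$. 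Two things you do that the paper does not: you give an explicit surjectivity construction (grouping coordinates by their $M$-value to recover $I$, $J$, $\phi$), which the paper defers to Theorem~\ref{LanguageTheorem}; and you catch that the lower endpoint $0$ in the stated range is never attained (every $c_i\ge 1$), so the proposition should be read as a containment. Your height argument is identical in content to the paper's one-line remark that ``the height of the caps varies.''
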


\begin{proof}
The statement about the height of the spellings follows from the fact that the height of the caps varies.
The $m = 1$ case is readily observed from the fact that projecting to one coordinate is equivalent to the language in Lemma \ref{L1}. 
From the $m = 1$ case, one can observe that the largest coordinate must be a member of $L_{1,n}$ and thus must 
evaluate to an element of $\left[\frac{3^{n+1} + 1 }{2}, \frac{3^{n+2} -1 }{2} \right]^m$.
All other coordinates can be in any other level of $L_{1, i}$, and can evaluate to any thing in  $\left[0, \frac{3^{n+2} -1 }{2} \right]^m$.
\end{proof}

When counting, it will be useful to break our language up into the subsets 
that $L_{m,n}$ is made up of and then group them by which letters they have via the permutations.
For example, in Section \ref{sec-Z2}, we will break $L_2$ up into a union of four disjoint sets, 
$U_{ab}$, $\bigcup t^n V_{ab} (TW_{ab})$,
$\bigcup t^n V_a (TW_a)^\alpha U_b (TW_{ab})^\beta$, 
and $\bigcup t^n V_b (TW_b)^\alpha U_a (TW_{ab})^\beta$ 
as seen in Figure \ref{fig:Z2Words} at the end of section \ref{sec-Z2}.

It is easy to get lost in the setup, but what the words in the
language look like is actually quite simple if you think of them as
base-3 expansions where the highest digit must be positive and every
other digit can be $-1$, $0$, or $1$ with the added caveat that the
highest digit can be $2$ only if it has the highest place value among
all $m$ entries.

To illustrate this, consider the $m=2$ case, consider the group
element $a^{10} b^{16} \in \Z^2_+$.  In $L_2$, it is spelled $t^2 ab^2
TaB Tb$ since $10 = 1\cdot 3^2 + 0\cdot 3 + 1\cdot 1$ and $16 = 2\cdot
3^2 - 1\cdot 3 + 1\cdot 3$.  Here the highest place value is $2$ since
at least one coordinate has their base 3 expansion in this way with a
term on $3^2$.  On the other hand, $a^{2} b^{16} = t^2 ab^2TBTab$  
since $2 = 1\cdot 3 - 1\cdot 1$ leads with $1$. 
($2\cdot 1$ was not chosen since it leads with 2 but not as the $3^2$
term). 

\begin{thm}\label{LanguageTheorem}
$L_m$ is a language of geodesics bijecting to the positive orthant $\Z_+^m \subset \Z^m *_{g\mapsto g^3}$. 
\end{thm}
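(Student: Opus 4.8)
# Proof Proposal for Theorem \ref{LanguageTheorem}

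The plan is to verify three assertions: (i) every word of $L_m$ evaluates to an element of $\Z_+^m$; (ii) the resulting map $L_m\to\Z_+^m$ is a bijection; and (iii) every word of $L_m$ is a geodesic in $\Z^m *_{g\mapsto g^3}$.

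For (i) and (ii) the organizing idea is that a word of $L_m$ is precisely a simultaneous choice, for each coordinate $\ell$, of a base-$3$ representation of $k_\ell$ all of whose digits lie in $\{-1,0,1\}$ except for a positive leading digit --- with the one extra freedom, built into the caps $V_X$, that a leading digit $2$ may occur only at the top level $n$ and only for coordinates sharing that top level: exactly the normal form described informally before the statement. First I would record from Lemma \ref{L1} (in its modified form, with caps $\{a^2,taT,taTa\}$) and from the Proposition on the ranges $\bigl[\tfrac{3^{n+1}+1}{2},\tfrac{3^{n+2}-1}{2}\bigr]$ that, for $m=1$, each $k\ge1$ has a unique such representation, of a well-defined height, realized by the unique $L_1$-spelling of $a^k$. \emph{Surjectivity.} Given $g=(k_1,\dots,k_m)$: if $g=(1,\dots,1)$ it is represented by $U_m$; otherwise set $n$ to be the largest of the coordinate heights, sort the generators into blocks in order of decreasing height (the $k_\ell=1$ coordinates forming the final block), which --- using the canonical increasing-index spelling for each set $V_X,U_X,W_X$ --- determines the partition $(I,J)$ and the permutation $\phi$, and read the cap and the successive $W$-letters off the zero-padded coordinate representations; a coordinatewise evaluation confirms this word lies in $L_{m,n}$ and represents $g$. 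Applying the same evaluation to an arbitrary word of $L_m$ gives (i). \emph{Injectivity.} Reversing the construction, $g$ determines $n$, hence the block decomposition, hence $(I,J)$ and $\phi$, and then the cap and $W$-letters by the uniqueness in Lemma \ref{L1} applied coordinatewise; so distinct words of $L_m$ represent distinct elements. The point requiring care is the bookkeeping that the heights forced by the $j_k$'s --- the first ($V$-)block at height $n$, block $k\ge2$ at height one above its common coordinate height --- are consistent with $\sum_k j_k=n$; this is a short telescoping check.

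For (iii) I would establish the length formula $\ell_\calG(g)=\min_{M\ge0}\bigl[\,2M+\sum_{\ell}c_\ell(M,k_\ell)\,\bigr]$ for $g=(k_1,\dots,k_m)\in\Z_+^m$, where $c_\ell(M,k)$ denotes the minimal value of $\sum_{j=0}^{M}|d_j|$ over representations $k=\sum_{j=0}^{M}d_j3^j$. The upper bound is the explicit layered spelling $t^M(\text{layer }M)\,T\,(\text{layer }M-1)\cdots T\,(\text{layer }0)$, using $2M$ occurrences of $t^{\pm1}$ and $\sum_\ell c_\ell(M,k_\ell)$ generator letters. For the lower bound, project an arbitrary spelling $w'$ onto each of the $m$ copies of $BS(1,3)$: writing $T'$ for the number of $t^{\pm1}$ in $w'$ and $A'_\ell$ for the number of $a_\ell^{\pm1}$, one has $\ell(w')=T'+\sum_\ell A'_\ell$, while $T'\ge2M'$ for $M'$ the maximal height of $w'$ (using that the total $t$-exponent is $0$), and $A'_\ell\ge c_\ell(M',k_\ell)$ because the net horizontal moves of $w'$ at levels $0,\dots,M'$ give some representation of $k_\ell$ --- spellings dipping below level $0$ being handled separately, since they only waste $t$-letters. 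Finally one checks that the $L_m$-word $w$ for $g$ realizes this minimum: its maximal height $M_w$ is $n$ or $n+1$ according to the type of its top cap, each coordinate is realized with exactly $c_\ell(M_w,k_\ell)$ generator letters (the cap and $W$-sets being efficient spellings, and $2\cdot3^n=3^{n+1}-3^n$ carrying no cost penalty), and $M_w$ is an optimal choice in the formula because for $M\ge M_w$ the bracket is increasing while for $M<M_w$ at least one coordinate --- one attaining the maximal height --- pays at least $2$ extra in $c_\ell$.

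I expect the last verification --- that the rigid shape of $L_m$ (all maximal-height coordinates forced into a single block under a single cap type) nonetheless attains the optimum of the length formula --- to be the main obstacle; it is where one must know that the three cap types in $V_1$, and hence the cap sets $V_X$, have been chosen exactly so that no cheaper height or cheaper per-coordinate representation is available. The remaining delicate points are the partition/permutation bookkeeping for injectivity, the treatment of spellings that leave the region of nonnegative height, and the boundary cases --- coordinates with $k_\ell=1$ and the single exceptional element $a_1\cdots a_m\in U_m$. An alternative organization of the whole argument is induction on $m$, splitting off the final block of the partition and invoking the $m=1$ case of Lemma \ref{L1} as the base; but the length-formula approach has the advantage of making the geodesic property transparent.
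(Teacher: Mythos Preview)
Your proposal is correct but organized quite differently from the paper.  For the bijection (your (i)--(ii)) the two arguments agree: both read off the base-$3$ expansion with digits in $\{-1,0,1\}$ (and a possible leading $2$ at the top level) coordinatewise.  For geodesicity, however, the paper runs a double induction --- first on $m$, with the McCann--Schofield $m=1$ case as base, and then inside the inductive step a second induction on maximal height, showing that an arbitrary geodesic $u$ can be rewritten as $t^n v s$ by pushing excess $a_i$-powers up level by level (Cases~I and~II).  Your route instead establishes the closed formula $\ell_\calG(g)=\min_{M\ge 0}\bigl[2M+\sum_\ell c_\ell(M,k_\ell)\bigr]$ by a projection-to-$BS(1,3)$ lower bound and an explicit layered upper bound, then checks that the $L_m$-normal form attains the minimum.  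The paper's argument is more algorithmic --- it tells you \emph{how} to normalize any geodesic --- whereas yours is more quantitative, producing an exact length formula that makes the optimality of the particular cap choices in $V_m$ visible.  Your flagged delicate points (negative heights, the $k_\ell=1$ boundary, and the cost comparison $c_\ell(M_w-1,k_\ell)\ge c_\ell(M_w,k_\ell)+2$ for the top-height coordinate) are all genuine but routine; in particular the negative-height case follows by observing that the bottom digit $d_{-N}$ of any integer representation is divisible by $3$, so carrying it up one level drops the total cost by at least $2$.
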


\begin{proof}
Tracking through the setup yields that $L_m$ spells all words in the
positive orthant uniquely as they are in bijection with their base-3
expansions.  What remains to be proven is that they are geodesic.  We
do this inductively, with $m=1$ being our base case, in which, by the
observation above, our spellings are equivalent to the
McCann-Schofield geodesics, which we noted were geodesic in \S
\ref{sec-BS13} (it can also be shown through an induction on the
height of the geodesic similar to as we do when inducting on $m$).
Now, assume that for all $k < m$, $L_k$ is a language of geodesics
bijecting to the positive orthant of $\Z^k \subset \Z^k *_{g\mapsto
  g^3}$.  The inductive step is that this implies that $L_m$ is a
language of geodesics for $\Z_+^m \subset \Z^m *_{g\mapsto g^3}$.

In order to prove this, we prove that all geodesic spellings of
elements in the positive orthant are equivalent to a spelling from
$L_m$ by inducting on the maximum height (this is an induction to
prove the inductive step of the theorem).  For the base case, consider
geodesic spellings that use no conjugations.  Obviously the two
spellings $a_1 \cdots a_m$ and $ a_1^2 \cdots a_m^2 \in L_m$ are the
only geodesic spellings of those words, and $a_1^3 \cdots a_m^3$ is
not a geodesic spelling since it can also be expressed as $ta_1 \cdots
a_m T$ in less letters.  Then the only other possible geodesic
spellings are of words in $[1,3]^m$, which there are finitely many.
In the case that the highest power is $2$, then it is clearly in
$L_{m,0}$ as an element of $V'_m$.  ex. $a_1^1 a_2^2$ is geodesic and
in $L_{3,0}$.  In the case that the highest power is $3$, then it
can be expressed as an element of $L_{m,0}$ by replacing all the
powers of $3$ with a conjugation $taT$, replacing all powers of two by
a conjugation like $taTA$, and forcing all conjugations to sharing the
same $t$ pair.  ex. $a_1^1 a_2^2 a_3^3 = t a_2 a_3 T a_1 A_2 $ is
geodesic and in $L_{3,0}$.

Now, assume that all spellings of max height $< n$ are equivalent to a
unique spelling in our language, and consider a geodesic spelling $u$
of max height $n$.  We first prove that $u$ is equivalent to a
geodesic spelling starting with $t^n$ with $\tau$ non-increasing over
successive subwords, and then show that such a geodesic spelling is
equivalent to a spelling our language.  Start by noting that if $u 
= u_0 T u_1 t u_2$ were a geodesic spelling, then $u_1$ must evaluate
to a word in $\langle a_1^3, \ldots, a_m^3 \rangle$, and we can notice
that such words can be geodesically spelled as conjugations, which
would then force a $Tt$ cancellation, contradicting geodicity.  Thus
geodesic spellings must have a $t$ come before a $T$.  Find the first
instance of instance of a $t$ in $s$ and the last instance of $T$ in
$s$ in our spelling $u$, and split our word as $u = u_0 t u_1 T u_2$,
where $u_0$ and $u_2$ have no powers of $t$.  Both $u_0$ and $t u_1 T
u_2$ are equal to words in $\Z^m$, and thus commute, so $u = t
u_1 T u_2 u_3$.  We must investigate what $u_1$ can be.

Case I: $u_1$ is a geodesic spelling  of an element in the positive orthant of the horocyclic subgroup.  
Then it has max height $n-1$, so by the inductive hypothesis, 
$u_1$ is equivalent to some $t^{n-1} v s \in L_{m}$.
This makes the spelling $t^n v s T u_2 u_0 =  u$ geodesic. 

If $u_2 u_0 \in W_m$, then we have that  $ t^n v s T u_2 u_0$ is geodesic and in our language, so we are done.
Else, we have to construct a new word in our language.
For this to happen, then for some $i$, the $a_i$ power of $u_2 u_0$ is not $-1$, $0$, or $1$.
If the $a_i$ power is greater than $2$, then if we use $Ta_i^3 = a_it$, we have shortened our word, which contradicts our spelling being geodesic, 
and similar for the power being less than $-2$.
If the $a_i$ power is $2$, then we can use that $Ta_i^2 = a_iTA_i$ to slide the power up one level and similar for the power of $-2$.
If the resulting spelling is in our language, then we are done.
If it isn't, then that means we have an $a_i^2$ at that level, and we can iterate the process.
At worst this terminates when we reach $v$, in which case if the power in $v$ is 0 or 1, we increase it to 2 and are done.
If the power in $v$ is $2$, then to deal with our power of $3$, we replace $v a_i$ with an element of $L_{m,1}$ that evaluates to $v' a_i$,
which gives us a new spelling in $L_{m,n+1}$.

Case II: $u_1$ is a geodesic spelling of an element in the horocyclic
subgroup, but not in the positive orthant.  In this case, the power of
$a_i$ on the evaluation of $u_1$ is $0$ for some indices $i$ (i.e. we
did not use a top dimensional cap).  If the power is nonzero for $k$
indices, then consider the isometrically embedded copy of $\Z^k*_{g
  \mapsto g}$ spanned by these indices plus $t$.  We can spell $u_1$
from $L_k$ geodesically as $t^{n-1} v s$.  This makes the spelling
$t^n v s T u_2 u_0$ geodesic, and, additionally, $u_2 u_0$ must
evaluate to non-zero powers for the other $m-k$ indices.  Now we run
the same argument as in Case I to finish this out.
\end{proof}

\begin{remark}
We have seen that $L_m$ gives a set of unique geodesics for the
positive orthant of $\Z^m$. We have also given a description of this
language using a generalized FSA and the requirement that $\tau =
0$. Using the methods sketched in Remark~\ref{rem-bijectsToRegular},
it now follows that the subgroup growth of the positive orthant is
rational.  However, the machine we described has $2^{m+1}$ states.
Further, since this is a generalised FSA with epsilon transitions, we cannot easily count words.
We now describe smaller machines which will allow us to perform explicit calculations.
These explicit calculations will be helpful in proving the whole group has rational growth.
\end{remark}

\section{The Prefix/Suffix Series $R_m(x)$} \label{sec-conj}

We return now to counting to create a series whose coefficients count prefix/suffix pairs. 
Just as noted in Remark \ref{rem-bijectsToRegular} for the $BS(1,3)$ case, 
for each $m$ there is a length preserving bijection 
between prefix/suffix pairs and the language  $R_m = \{t^n (TW_m)^n \mid n > 0 \}$, 
(where here $t$ and $T$ are symbols in an alphabet and not group elements)
which is itself in length preserving bijection with 
$R_m' = \{(tTW_m)^n \mid n > 0 \}$, which is regular (although it no longer accepts geodesics in the group).

An FSA to compute the growth of $R'_m$ would have $O(m^2)$ states 
but we can easily compute this as a gFSA with a single edge and a loop labeled by the language $tTW_m$.

\begin{center}
\begin{tikzpicture}[->]
\tikzset{vertex/.style = {shape=circle,draw,minimum size=1.5em}}
\tikzset{edge/.style = {->,> = latex'}}
\node[vertex, double] (s) at  (0,0) {$S$};
\draw (s) to [loop right]  node {$tTW_m$} (s);
\end{tikzpicture}
\end{center}

\begin{prop}
The Prefix/Suffix Series is given by $R_m(x) =  \left( 1 -  \sum_{i=0}^m {m \choose i}  2^i x^{i+2}  \right)^{-1}$. 
\end{prop}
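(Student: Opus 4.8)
The plan is to compute the growth series of the regular language $R'_m = \{(tTW_m)^n \mid n > 0\}$ directly from the one-state gFSA pictured above, namely a single accept state $S$ with a loop labeled by the language $tTW_m$. First I would verify that this gFSA has the unique decomposition property: any word in $L(M)$ is a concatenation $w = u_1 \cdots u_n$ with each $u_i \in tTW_m$, and because every such $u_i$ begins with the two-letter block $tT$ followed by a (possibly empty) word over $\{a_1^{\pm},\dots,a_m^{\pm}\}$ that contains no $t$ or $T$, the positions of the $tT$ markers are forced, so the decomposition is unique. Hence by the Corollary to the gFSA growth proposition, $R_m(x) = v_s(I-A)^{-1}v_a$ where $A$ is the $1\times 1$ generalized adjacency matrix $A = (f(x))$ with $f(x)$ the growth series of the edge language $tTW_m$. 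This gives immediately $R_m(x) = (1 - f(x))^{-1}$, so the entire content of the proposition reduces to computing $f(x)$, the growth series of $tTW_m$.

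Next I would compute $f(x)$. A word in $tTW_m$ has the form $tT$ followed by an element of $W_m = \{a_1^{e_1}\cdots a_m^{e_m} \mid e_i \in \{-1,0,1\}\}$; such an element, written as a string over the alphabet, has length equal to the number of nonzero exponents, since each $a_i^{\pm 1}$ contributes a single symbol ($a_i$ or $A_i$) and $a_i^0$ contributes the empty string. So the number of elements of $W_m$ whose spelling has length exactly $i$ is the number of ways to choose which $i$ of the $m$ coordinates are nonzero, times $2^i$ for the sign choices: that is $\binom{m}{i}2^i$. Adding the fixed prefix $tT$ of length $2$, the number of words in $tTW_m$ of length $i+2$ is $\binom{m}{i}2^i$, whence
\[
f(x) = \sum_{i=0}^{m}\binom{m}{i}2^i x^{i+2}.
\]
(Equivalently $f(x) = x^2(1+2x)^m$ by the binomial theorem, though we need not simplify.) Combining with the previous paragraph yields
\[
R_m(x) = \Bigl(1 - \sum_{i=0}^{m}\binom{m}{i}2^i x^{i+2}\Bigr)^{-1},
\]
which is the claimed formula.

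There is really no serious obstacle here; the only points requiring a sentence of care are (i) justifying that the one-state gFSA genuinely has the unique decomposition property, so that the Corollary applies and the geometric-series manipulation is legitimate, and (ii) the bookkeeping that $W_m$ contributes $\binom{m}{i}2^i$ spellings of length $i$ rather than, say, counting with multiplicity — this follows from the product description $W_m = \{a_1,\epsilon,A_1\}\cdots\{a_m,\epsilon,A_m\}$, since distinct choices of exponent vectors $(e_1,\dots,e_m)$ give distinct strings (the $\epsilon$'s simply drop out) and the generating function of each factor $\{a_i,\epsilon,A_i\}$ is $1 + 2x$, so the generating function of $W_m$ is $(1+2x)^m = \sum_i \binom{m}{i}2^i x^i$. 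Multiplying by the $x^2$ from the fixed prefix $tT$ gives $f(x)$, and the $1\times 1$ matrix identity $(I-A)^{-1} = (1 - f(x))^{-1}$ finishes the proof.
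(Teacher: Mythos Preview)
Your proof is correct and follows essentially the same route as the paper: observe the one-state gFSA has the unique decomposition property, read off the $1\times 1$ generalized adjacency matrix as $x^2 W_m(x)$, invert, and expand $(1+2x)^m$ binomially. You supply a bit more detail than the paper does (an explicit argument for unique decomposition via the forced $tT$ markers, and the combinatorial count of length-$i$ words in $W_m$), but the strategy and the computation are the same.
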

\begin{proof}
First observe that this gFSA has the unique decomposition property.

The adjacency matrix of the gFSA for the language $R'_m$ is just the $1 \times 1$ matrix consisting of $\{ x^2 W_m(x) \}$,
where $W_m(x)$ is the polynomial equal to the growth of $W_m$.
Thus the growth is just $\frac{1}{1 - x^2 W_m(x)}$
Recall that $ W_m$ is defined as $\{a_1,\epsilon,A_1\}  \cdots \{a_m,\epsilon,A_m\}$, so $W_m(x) = (1 + 2x)^m$, and our formula follows from the binomial expansion.
\end{proof}

\section{The Cap Polynomial $V_{m}(x)$} \label{sec-cap}
In the previous section we computed $R_m(x)$, which counts how many ways there are to contribute $n$ letters by conjugation by a prefix/suffix pair.
However, we did not count how many letters the thing we are conjugating contributes to the word in the first place. 
We seek to do so now.

Recall that we defined $V$ to be the finite set of things we conjugate (caps) uniquely spelled geodesically, 
and defined the {\em cap polynomial} to be the corresponding growth series, which is polynomial since $V$ is finite.
There is some freedom to choose our caps, as we can recall from the $BS(1,3)$ case, in that to geodesically spell a word in $\Z_+$, 
we have two equivalent sets of caps, $\{a^2, a^3, a^4 \}$ or $\{a^2, taT, taTa \}$.
For higher $m$, equivalent sets of caps are a bit harder to describe, but we can focus on 
the top dimensional caps, which are spellings of the group elements $a_1^{i_1} \ldots a_m^{i_m}$ with $2 \leq i_k \leq 4$.
Define the {\em cap polynomial for $\Z^m$ in $\Z^m *_{g \mapsto g^3}$ } to be $V_m(x) = \sum \eta_{m,n} x^n$, 
where $\eta_{m,n}$ is the number of words $a_1^{i_1} \ldots a_m^{i_m}$ with $2 \leq i_k \leq 4$ for all $k$ of word length $n$.
Computing this then amounts to finding the geodesic spellings of all such words, which we have already done in \S \ref{sec-lang} by the family of sets $V_m$. 
For example, these are the caps we use for $m=2$ case

\[ \begin{array}{clr}
a^i b^i & \text{Spelling from } V_2  & \ell_{\calG} (a^i b^j) \\ \hline \hline
a^2 b^2 & a^2 b^2 & 4 \\ 
a^2 b^3 & tabTA  & 5 \\ 
a^2 b^4 & tabTAb & 6 \\ 
a^3 b^2 & tabTB & 5 \\ 
a^3 b^3 & tabT & 4 \\ 
a^3 b^4 & tabTb & 5 \\ 
a^4 b^2 & tabTaB & 6 \\ 
a^4 b^3 & tabTa & 5 \\ 
a^4 b^4 & tabTab & 6 \\ 
\end{array} \]

We first can notice that $a^3$ and $a^4$ have geodesic spellings $taT$ and $taTa$, but that $a^2$ is not spelled geodesically by $taTA$.
Observe then from the geodesic spelling  $taT$ of $a^3$, by adding in two letters we can get a geodesic spellings of $a^3 b^2$ and $a^3 b^4$.
Similarly, by adding in just one letter, we can get $a^3 b^3$. 
Similar is true if we start with the geodesic spelling $taTa$ of $a^4$.
However, starting from the only geodesic spelling $a^2$ of $a^2$, we must add two letters to get to get geodesic spellings of $a^2 b^2$.
Equivalently, we can think of this as starting with the geodesic spelling $b^2$ of $b^2$, and adding two instances of $a$.
If we want to spell $a^2 b^3$ or $a^2 b^4$ the geodesically, we must start with geodesic spellings $b^3$ and $b^4$  and add two instances of $a$ there as well.

\begin{prop}
The polynomials $V_{m}(x)$ are related through the recursion:
$$ V_{m}(x) = (x + 2x^2)( V_{m-1} - x^{2(m-1)} ) + x^{2(m-1)}(V_{1}) \text{ , Where } V_{1}(x) = x^2 + x^3 + x^4$$
\end{prop}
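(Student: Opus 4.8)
The plan is to count, for fixed word length $n$, the geodesic spellings of group elements $a_1^{i_1}\cdots a_m^{i_m}$ with each $2\le i_k\le 4$, by conditioning on the exponent $i_m$ of the last generator and relating the resulting count to the $(m-1)$-variable situation. First I would set up the bijective description of such spellings: by Theorem~\ref{LanguageTheorem} (restricted to the sub-$\Z^m$ whose orthant entries all lie in $[2,4]$), a word $a_1^{i_1}\cdots a_m^{i_m}$ with every $i_k\in[2,4]$ is spelled in $L_m$ either as the all-squares concatenation $a_1^2\cdots a_m^2$ (length $2m$) when every $i_k=2$, or as a single conjugate $t\,u\,T\,w$ where $u=a_1\cdots a_m$ (length $m$) and $w\in W_m$ encodes which coordinates get bumped from $3$ to $2$ (a $W$-letter $A_k$) or from $3$ to $4$ (a $W$-letter $a_k$) or stay at $3$ ($\epsilon$ in slot $k$). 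So the caps are exactly the $2m+1$ words $a_1^2\cdots a_m^2$ together with $t(a_1\cdots a_m)Tw$ for $w\in W_m$, minus the overlap word $t(a_1\cdots a_m)T(a_1^{-1}\cdots a_m^{-1})$ which evaluates to the all-squares element and whose conjugated spelling is not geodesic — this is precisely the set $V_m$ as defined.

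Next I would split the generating function by the behavior of the $m$-th coordinate. Writing $V_m(x)=\sum_{w\in W_m}x^{\,m+2+\ell(w)} \;+\; x^{2m}\;-\;x^{\,m+2+\ell(w_0)}$ where $w_0=a_1^{-1}\cdots a_m^{-1}$ has $\ell(w_0)=m$, and using the recursion $W_m=W_{m-1}\{\epsilon,a_m,A_m\}$, the conjugate-type caps contribute $(x+2x^2)$ times the $(m-1)$-level conjugate-type count: the factor $x$ for the choice of slot-$m$ letter $\epsilon$ (which adds one letter to the core $a_1\cdots a_{m-1}$) and $2x^2$ for the two choices $a_m$ or $A_m$ (each adding a core letter plus a suffix letter). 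That $(m-1)$-level conjugate-type count is exactly $V_{m-1}(x)-x^{2(m-1)}$, since $V_{m-1}$ consists of the conjugate caps plus the single all-squares cap of length $2(m-1)$. This yields the first term $(x+2x^2)\bigl(V_{m-1}(x)-x^{2(m-1)}\bigr)$.

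The remaining correction terms handle the all-squares coordinate pattern, i.e.\ the case $i_m=2$ combined with the exceptional element: when the $m$-th coordinate is bumped down to $2$ (suffix letter $A_m$), the previous recursion already counted a word $t(a_1\cdots a_{m-1})T\tilde w A_m$; but those words with $\tilde w=a_1^{-1}\cdots a_{m-1}^{-1}$ give the all-squares element $a_1^2\cdots a_m^2$, which must instead be spelled by its length-$2m$ concatenation, and similarly the genuinely exceptional $(m-1)$-dimensional sub-caps reappear. Concretely, the "bad" contribution to be removed and the all-squares/lower-exceptional contribution to be added together repackage as $x^{2(m-1)}$ times the full $V_1(x)=x^2+x^3+x^4$ factor (accounting for the last coordinate being $2$, $3$, or $4$ once the first $m-1$ coordinates are forced into the all-squares configuration of length $2(m-1)$). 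Assembling the three pieces gives
$$V_m(x)=(x+2x^2)\bigl(V_{m-1}(x)-x^{2(m-1)}\bigr)+x^{2(m-1)}V_1(x),$$
with base case $V_1(x)=x^2+x^3+x^4$ read off directly from $\{a^2,taT,taTa\}$.

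The main obstacle is the bookkeeping of the correction terms: one must argue carefully that the word $t(a_1\cdots a_m)T(a_1^{-1}\cdots a_m^{-1})$ and its lower-dimensional analogues are the \emph{only} place where the naive conjugate count fails to be geodesic (so that exactly $x^{2(m-1)}V_1(x)$ is added back and not more), and that there is no double-counting between the all-squares cap and the conjugate caps. I would verify this against the explicit $m=2$ table in the excerpt, where $V_2(x)=x^4+4x^5+4x^6$ should equal $(x+2x^2)(x^2+x^3+x^4-x^2)+x^2(x^2+x^3+x^4)=(x+2x^2)(x^3+x^4)+x^4+x^5+x^6 = x^4+3x^5+2x^6+x^4+x^5+x^6 = 2x^4+4x^5+3x^6$ — and since the table lists one cap of length $4$, not two, I would reconcile this by noting the all-squares cap of length $2m$ is counted once on the left but the recursion's last term supplies an $x^{2(m-1)}\cdot x^2$ that coincides with $(x+2x^2)\cdot$(nothing), so the correct reading is that the $-x^{2(m-1)}$ inside the first factor and the leading $x^2$ of $V_1$ in the last term together contribute exactly one length-$2m$ cap; pinning down this cancellation precisely is the crux of the argument.
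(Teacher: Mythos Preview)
Your approach is essentially the paper's: split the $m$-dimensional caps according to whether the first $m-1$ exponents are all equal to $2$ or not. The paper phrases this directly in terms of group elements rather than spellings, which makes the bookkeeping cleaner: if $w=a_1^{i_1}\cdots a_{m-1}^{i_{m-1}}$ is \emph{not} all-squares then its geodesic spelling already contains a $t\cdots T$, so inserting one $a_m$ before the last $T$ (and optionally one $a_m^{\pm 1}$ after) gives geodesics for $wa_m^3$ (cost $+1$) and $wa_m^2,wa_m^4$ (cost $+2$ each), yielding the factor $(x+2x^2)$; if $w$ \emph{is} all-squares, then $wa_m^2,wa_m^3,wa_m^4$ cost $+2,+3,+4$ respectively, yielding $x^{2(m-1)}V_1(x)$.

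Your spelling-based framing leads you to a subtle misstatement: the first term $(x+2x^2)(V_{m-1}-x^{2(m-1)})$ does \emph{not} enumerate all conjugate-type $m$-caps. It misses exactly the two elements $a_1^2\cdots a_{m-1}^2 a_m^3$ and $a_1^2\cdots a_{m-1}^2 a_m^4$, whose conjugate spellings have $\tilde w = A_1\cdots A_{m-1}$ (forbidden at level $m-1$) but $s\in\{\epsilon,a_m\}$. These missing caps are precisely what the $x^3$ and $x^4$ parts of $x^{2(m-1)}V_1(x)$ supply, so your formula is still correct---but your third paragraph's description of the correction as handling only ``the case $i_m=2$ combined with the exceptional element'' is not the right accounting.

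Your $m=2$ check actually works without any reconciliation: the table has \emph{two} caps of length $4$, namely $a^2b^2$ and $tabT$, so $V_2(x)=2x^4+4x^5+3x^6$ on the nose, matching the recursion. The confusion you flag at the end is a miscount of the table, not a defect in the argument.
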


\begin{proof}
We prove this by induction, with the base case being the $m=2$ case described above. Note:
$$(x + 2x^2)(x^3 + x^4) + x^2 (x^2 + x^3 + x^4) = 2x^4 + 4x^5 + 3x^6 .$$

The induction follows the same argument as the construction of the $m=2$ case above:
If we take any word $w = a_1^{i_1} \ldots a_{m-1}^{i_{m-1}}$ except $a_1^2 \cdots a_{m-1}^2$, 
it must have have a geodesic spelling with at least one conjugation since any $a_i^3$ can be spelled $ta_iT$. 
We can get to a geodesic spelling of $w a_m ^3$ by adding one $a_m$ before the last $T$.
Similarly, we can get geodesic spellings of $w a_m^2$ and $w a_m^4$ by adding an $a_m$ before the last $T$ and an $a_m$ or an $A_m$ after..  

The only remaining case is when you start with all the generators squared, which has length $2m - 2$.  
Should you do that, then to get a geodesic spelling of $a_1^2 \cdots a_{m-1}^2 a_m^2$ takes adding two more letters, 
$a_1^2 \cdots a_{m-1}^2 a_m^3$ takes three more letters, and $a_1^2 \cdots a_{m-1}^2 a_m^4$ takes four more letters.
\end{proof}

\begin{cor}\label{cap-formula}
The cap polynomial $V_m(x)$  is equal to $x^{2m} + x^{m+2}W_m(x) - x^{2m+2}$.
\end{cor}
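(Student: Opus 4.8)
The plan is to obtain the closed form by a short induction on $m$ built directly on the recursion established in the previous proposition. The only auxiliary fact needed is the product identity $W_m(x) = (1+2x)\,W_{m-1}(x)$ (equivalently $W_m(x) = (1+2x)^m$), which is immediate from the relation $W_m = W_{m-1}\{a_m,\epsilon,A_m\}$ recorded just after the definition of $W_m$. The base case $m=1$ is a direct evaluation: $x^{2} + x^{3}W_1(x) - x^{4} = x^2 + x^3(1+2x) - x^4 = x^2 + x^3 + x^4 = V_1(x)$.

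For the inductive step I would assume $V_{m-1}(x) = x^{2(m-1)} + x^{m+1}W_{m-1}(x) - x^{2m}$ and substitute into the recursion $V_m(x) = (x+2x^2)\bigl(V_{m-1}(x) - x^{2(m-1)}\bigr) + x^{2(m-1)}V_1(x)$. Here $V_{m-1}(x) - x^{2(m-1)} = x^{m+1}W_{m-1}(x) - x^{2m}$; multiplying by $x+2x^2 = x(1+2x)$ turns $x^{m+1}W_{m-1}(x)$ into $x^{m+2}W_m(x)$ by the auxiliary identity and produces the extra terms $-x^{2m+1} - 2x^{2m+2}$. Adding $x^{2(m-1)}V_1(x) = x^{2m} + x^{2m+1} + x^{2m+2}$ then cancels the two $x^{2m+1}$ terms and leaves a net $x^{2m} - x^{2m+2}$, so the whole expression collapses to $V_m(x) = x^{2m} + x^{m+2}W_m(x) - x^{2m+2}$, as claimed.

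I do not expect a genuine obstacle: the only thing to watch is the exponent bookkeeping and invoking the $W_m$ product identity at the right moment. As an alternative I would note that the same formula can be read straight off the set definition $V_m = \bigl(U_m^2 \cup tU_mTW_m\bigr)\setminus tU_mTU_m^{-1}$ by inclusion--exclusion on string lengths: $U_m^2$ is a single word of length $2m$ contributing $x^{2m}$, the map $w \mapsto t a_1\cdots a_m T w$ is a length-shifting bijection so $tU_mTW_m$ contributes $x^{m+2}W_m(x)$, and the removed set $tU_mTU_m^{-1}$ is the single word $t a_1\cdots a_m T A_1\cdots A_m$ of length $2m+2$, which sits inside $tU_mTW_m$ since $A_1\cdots A_m \in W_m$. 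The only remark needed along this route is that $U_m^2$ and $tU_mTW_m$ are disjoint as sets of strings because every word of the latter begins with the symbol $t$; granting this, additivity of growth over the disjoint pieces gives $V_m(x) = x^{2m} + x^{m+2}W_m(x) - x^{2m+2}$ with no induction at all.
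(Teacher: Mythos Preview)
Your proof is correct and follows essentially the same route as the paper: both verify the closed form by induction on $m$, plugging the formula into the recursion $V_m(x) = (x+2x^2)\bigl(V_{m-1}(x) - x^{2(m-1)}\bigr) + x^{2(m-1)}V_1(x)$ from the preceding proposition and checking the base case $m=1$. Your alternative direct argument from the set description $V_m = \bigl(U_m^2 \cup tU_mTW_m\bigr)\setminus tU_mTU_m^{-1}$ is exactly what the paper alludes to in its opening sentence but does not spell out; the paper instead chooses to work through the induction precisely because that route also certifies any equivalent set of caps.
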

\begin{proof}
The formula comes from the construction in \S \ref{sec-lang} of $V_m = \left( U_m^2 \bigcup (tU_m TW_m) \right) \setminus (tU_mTU_m^{-1})$.
However, the above proposition means that if we use an equivalent set of caps, the induction formula must hold,
so we prove that the formula satisfies the inductive formula from the previous proposition.
For $m = 1$ we get exactly $x^2 + x^3 + x^4$ as desired.
Now assume that $V_{m-1}(x)  = x^{2m-2} + x^{m+1}(1+2x)^{m-1} - x^{2m}$ as in the formula.

\begin{align*}
V_m(x) &= x(1+2x)(V_{m-1} - x^{2m-2}) + x^{2m-2}(x^2 + x^3 + x^4)\\
 &= x(1+2x)( x^{m+1}(1+2x)^{m-1} - x^{2m}  )           + x^{2m} + x^{2m+1} + x^{2m+2}\\
 &= x^{m+2}(1+2x)^m - x^{2m+1} - 2x^{2m+2} + x^{2m} + x^{2m+1} + x^{2m+2}  \\
 &= x^{m+2}(1+2x)^m  + x^{2m} - x^{2m+2} 
\end{align*}
Recalling that $W_m(x) = (1+2x)^m$ yields the formula as desired.
\end{proof}

\section{Subgroup Growth for $\Z^2$}\label{sec-Z2}
We would now like to move on to the $\Z^2$ case and see that it agrees with Freden's computation in \cite{FredenOHGGT}.
The group presentation we use is $\langle a, b, t \mid taT = a^3, tbT = b^3, [a,b] = 1 \rangle$.
From Theorem  \ref{LanguageTheorem}, we have our language of geodesics $L_2$.
Recalling our definition of $L_2$, we can break up our union by which caps we are using.
Equivalently, we are breaking up $L_2$ by whether whether the $a$ projection has higher max height,
the $b$ projection has higher max height, or if they have the same max heights.
\begin{align*}
L_2 = U_{ab} &\bigcup_{n \geq 0} t^n V_{ab} (T W_{ab} )^n & \\
  &\bigcup_{n \geq 0} t^n V_{a} (T W_{a} )^\alpha U_{b} (T W_{ab})^\beta & \alpha + \beta = n \\
   &\bigcup_{n \geq 0} t^n V_{b} (T W_{b} )^\alpha U_{a}  (T W_{ab})^\beta & \alpha + \beta = n 
\end{align*}
 
where $U_a = \{a \}$,
$U_b = \{b\}$,
$U_{ab} = \{ab \}$, 
$V_a = \{ a^2, taT, taTa \}$, 
$V_b = \{ b^2, tbT, tbTb \}$, \\
$V_{ab} = \{a^2b^2, tabT, tabTA, tabTB, tabTa, tabTb, tabTab, tabTAb, tabTaB \}$,
$W_a = \{ \epsilon, a, A \}$,
$W_b = \{ \epsilon, b, B\}$, and \\
$W_{ab} = \{ \epsilon, a, A, b, B, ab, Ab aB, AB\} $,
as in \S \ref{sec-lang}. 

The first union corresponds to going up $n$, using a cap spelled in $a$ and $b$, 
and then coming down in any path.
The other two unions corresponds to going up $n$, using a cap in one letter, 
coming down in any branching path in that letter, moving out in the other letter once at at height $n-\alpha$,
and then coming down in any path.
Figure \ref{fig:Z2Words} shows how each of these families sit in the positive quadrant.
Thus, the growth series of the positive quadrant is equal to 
$$P_2(x) =  U_{2}(x) + V_{2}(x)R_2(x) + 2 (V_{1}(x)  R_1(x) ) ( U_1(x)  R_2(x)) .  $$

Where $U_1(x) = x$ is the growth of $U_a$ and $U_b$,
$U_2(x) = x^2$ is the growth of $U_{ab}$,
 $R_1(x)$ and $R_2(x)$ are prefix/suffix series as in \S\ref{sec-conj},
 and $V_1(x)$ and $V_2(x)$ are the cap polynomials as in \S\ref{sec-cap}.

Recalling $P_1(x)$ as the growth of the positive axis computed from $BS(1,3)$ in \S\ref{sec-BS13},
 we can now compute our subgroup growth series.
Since $\Z^2$ has four quadrants, four axis, and the origin, then the subgroup growth series for $\Z^2$ in $G$ is
$$ 1+ 4P_1(x) + 4P_2(x) =  \frac{ (1 - x) (1 + 2 x + 2 x^2)^2}{(1 - 2 x) (1 + x + 2 x^2)} = 1 + 4 x + 8 x^2 + 12 x^3 + 24 x^4 + 52 x^5 + 100 x^6 + 196 x^7 + 
 404 x^8 + \ldots $$

\newpage

\begin{figure}[ht]
$$  L_2 = {\color{orange} U_{ab} } \bigcup_{n \geq 0}
\begin{cases}
{\color{orange} t^n V_{ab} (T W_{ab} )^n } &  \\
{\color{blue} t^n V_{a} (T W_{a} )^\alpha U_{b} (T W_{ab})^\beta} &  \alpha + \beta = n\\
{\color{green} t^n V_{b} (T W_{b} )^\alpha U_{a}  (T W_{ab})^\beta} &  \alpha + \beta = n
\end{cases} 
$$

 $U_a = \{a \}$,
$U_b = \{b\}$,
$U_{ab} = \{ab \}$,

 $V_a = \{ a^2, taT, taTa \}$,
 $V_b = \{ b^2, tbT, tbTb \}$, 

$V_{ab} = \{a^2b^2, tabT, tabTA, tabTB, tabTa, tabTb, tabTab, tabTAb, tabTaB \}$,

$W_a = \{ \epsilon, a, A \}$,
$W_b = \{ \epsilon, b, B\}$, and
$W_{ab} = \{ \epsilon, a, A, b, B, ab, Ab aB, AB\}$\\

\begin{tikzpicture}[scale=1.2]

\tikzset{axis/.style = {->,> = latex'}}
\tikzset{TopLeftFar/.style = {fill=green, opacity=0.5}}
\tikzset{TopLeft/.style = {fill=green, opacity=0.5}}
\tikzset{Middle/.style = {fill=orange, opacity=0.5}}
\tikzset{BottomRight/.style = {fill=blue, opacity=0.5}}
\tikzset{BottomRightFar/.style = {fill=blue, opacity=0.5}}
\def\x{0.1}

\draw[axis] (0,0) to (13,0);
\draw[axis] (0,0) to (0,13);

\node at (0.5,-1) {$a^1$};
\node at (1.5,-1) {$a^2$};
\node at (2.5,-1) {$a^3$};
\node at (3.5,-1) {$a^4$};
\node at (4.5,-1) {$a^5$};
\node at (5.5,-1) {$\ldots$};
\node at (6.5,-1) {$a^{13}$};
\node at (7.5,-1) {$a^{14}$};
\node at (8.5,-1) {$\ldots$};
\node at (9.5,-1) {$a^{40}$};
\node at (10.5,-1) {$a^{41}$};
\node at (11.5,-1) {$\ldots$};
\node at (12.5,-1) {$a^{121}$};

\node at (-1,0.5) {$b^1$};
\node at (-1,1.5) {$b^2$};
\node at (-1,2.5) {$b^3$};
\node at (-1,3.5) {$b^4$};
\node at (-1,4.5) {$b^5$};
\node at (-1,5.5) {$\vdots$};
\node at (-1,6.5) {$b^{13}$};
\node at (-1,7.5) {$b^{14}$};
\node at (-1,8.5) {$\vdots$};
\node at (-1,9.5) {$b^{40}$};
\node at (-1,10.5) {$b^{41}$};
\node at (-1,11.5) {$\vdots$};
\node at (-1,12.5) {$b^{121}$};

\draw [Middle] (0+\x,0+\x) rectangle (1-\x,1-\x);
\node at (0.5,0.5) {$U_{ab}$};
\draw [BottomRight] (1+\x,0+\x) rectangle (4-\x,1-\x);
\node at (2.5,0.5) {$U_b V_a$};
\draw [TopLeft] (0+\x,1+\x) rectangle (1-\x,4-\x);
\node at (0.5,2.5) {$U_{a} V_b$};

\draw [Middle] (1+\x,1+\x) rectangle (4-\x,4-\x);
\node at (2.5,2.5) {$V_{ab}$};
\draw [Middle] (4+\x,4+\x) rectangle (7-\x,7-\x);
\node at (5.5,5.5) {$t V_{ab} T W_{ab}$};
\draw [Middle] (7+\x,7+\x) rectangle (10-\x,10-\x);
\node at (8.5,8.5) {$t^2 V_{ab} (T W_{ab})^2$};
\draw [Middle] (10+\x,10+\x) rectangle (13-\x,13-\x);
\node at (11.5,11.5) {$t^3 V_{ab} (T W_{ab})^3$};

\draw [TopLeft] (1+\x,4+\x) rectangle (4-\x,7-\x);
\node at (2.5,5.5) { $(t V_{b}) U_a (T W_{ab}) $};
\draw [TopLeft] (4+\x,7+\x) rectangle (7-\x,10-\x);
\node at (5.5,8.5) {$(t^2 V_{b}) U_a (TW_{ab})^2 $};
\draw [TopLeft] (7+\x,10+\x) rectangle (10-\x,13-\x);
\node at (8.5,11.5) {$(t^3 V_{b}) U_a(TW_{ab})^3 $};

\draw [BottomRight] (4+\x,1+\x) rectangle (7-\x,4-\x);
\node at (5.5,2.5) {$(t V_{a}) U_b (T W_{ab})$};
\draw [BottomRight] (7+\x,4+\x) rectangle (10-\x,7-\x);
\node at (8.5,5.5) {$(t^2 V_{a}) U_b (TW_{ab})^2$};
\draw [BottomRight] (10+\x,7+\x) rectangle (13-\x,10-\x);
\node at (11.5,8.5) {$(t^3 V_{a}) U_b (TW_{ab})^3$};

\draw [BottomRightFar] (7+\x,1+\x) rectangle (10-\x,4-\x);
\node at (8.5,2.5) {$(t^2 V_{a}) (T W_{a}) U_b  (TW_{ab})$};
\draw [BottomRightFar] (10+\x,1+\x) rectangle (13-\x,4-\x);
\node at (11.5,2.5) {$(t^3 V_a) (T W_a)^2 U_b (TW_{ab})$};
\draw  [BottomRightFar] (10+\x,4+\x) rectangle (13-\x,7-\x);
\node at (11.5,5.5) {$(t^3 V_a) (T W_a) U_b (TW_{ab})^2$};

\draw [BottomRightFar] (4+\x,0+\x) rectangle (7-\x,1-\x);
\node at (5.5,0.5) {$(t V_a) (T W_a) U_b$};
\draw [BottomRightFar] (7+\x,0+\x) rectangle (10-\x,1-\x);
\node at (8.5,0.5) {$(t^2 V_a) (T W_a)^2 U_b$};
\draw  [BottomRightFar] (10+\x,0+\x) rectangle (13-\x,1-\x);
\node at (11.5,0.5) {$(t^3 V_a) (T W_a)^3 U_b$};

\draw [TopLeftFar] (1+\x,7+\x) rectangle (4-\x,10-\x);
\node at (2.5,8.5) {$(t^2 V_b) (T W_b) U_a (T W_{ab})$};
\draw [TopLeftFar] (1+\x,10+\x) rectangle (4-\x,13-\x);
\node at (2.5,11.5) {$(t^3 V_b)  (T W_b)^2  U_a (T W_{ab})$};
\draw [TopLeftFar] (4+\x,10+\x) rectangle (7-\x,13-\x);
\node at (5.5,11.5) {$(t^3 V_b) (T W_b) U_a  (T W_{ab})^2$};

\draw [TopLeftFar] (0+\x,4+\x) rectangle (1-\x,7-\x);
\node [rotate=90] at (0.5,5.5) {$(tV_b)(T W_b) U_a$};
\draw [TopLeftFar] (0+\x,7+\x) rectangle (1-\x,10-\x);
\node [rotate=90] at (0.5,8.5) {$(t^2 V_b)(T W_b)^2 U_a$};
\draw [TopLeftFar] (0+\x,10+\x) rectangle (1-\x,13-\x);
\node [rotate=90] at (0.5,11.5) {$(t^3 V_b)(T W_b)^3 U_a$};



\end{tikzpicture}
\caption{Where the sets of $L_2$ evaluate to in $\Z_+^2 \subset \Z^2 *_{g \mapsto g^3}$. } \label{fig:Z2Words}
\end{figure}

\restoregeometry

\newpage

\section{ The Positive Series $P_m(x)$ and Rationality} \label{sec-Z3+} 
We have seen that the language $L_{m}$ is a language of geodesics for the positive orthant of $\Z^m$. 
Recall that its associated series is called the {\em positive series}, denoted $P_m(x)$.  (For brevity, we here abbreviate $U_m(x)$ etc.\ to $U_m$, etc.)

\begin{thm}
The growth series of the positive orthant, $P_m(x)$ is given by 

$$ P_m(x) = U_{m}  + \sum_{ i_1 + i_2 + \cdots + i_q = m  } \left(  {m \choose {i_1, \ldots, i_q }} ( V_{i_1}) (R_{i_1}) \left(  \prod_{1 < k < q}  (U_{i_k}) (R_{\sum_{ j \leq k} i_j} - 1) \right)  (U_{i_q})  (R_m) \right) $$

\end{thm}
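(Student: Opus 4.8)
The plan is to read $P_m(x)$ straight off the language $L_m$. By Theorem~\ref{LanguageTheorem} the normal forms in $L_m$ are in length-preserving bijection with $\Z_+^m$, so $P_m(x)$ equals the growth series of $L_m$, and it suffices to compute the latter from Definition~\ref{def:language}. I would first peel off $L_m = U_m \sqcup \bigsqcup_{n\ge 0} L_{m,n}$, which contributes the summand $U_m$, and then reorganize the double union defining the $L_{m,n}$ as a disjoint union indexed, in order, by a composition $I=(i_1,\dots,i_q)$ of $m$, by the assignment of the $m$ generators to the $q$ blocks, by $n$, and by the partition $J$ of $n$. For a fixed composition, the pieces with that block-size profile are indexed by the ways of distributing the generators into blocks of sizes $i_1,\dots,i_q$, of which there are $\binom{m}{i_1,\dots,i_q}$, and by symmetry each such assignment contributes the same growth series; this is the source of the multinomial coefficient.

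Next I would factor the growth of a single ``model'' piece with a fixed block assignment. Writing $s_k = i_1+\dots+i_k$, a level-$n$ word of this piece has the shape
\[
t^{n}\; V_{i_1}\;(TW_{s_1})^{j_1}\;U_{i_2}\;(TW_{s_2})^{j_2}\cdots U_{i_q}\;(TW_{s_q})^{j_q},\qquad j_1+\dots+j_q=n,
\]
where $V_{i_1}$ is the cap set carried on block $1$ and $W_{s_k}$ is carried on the first $k$ blocks. Splitting the prefix as $t^{n}=t^{j_1}\cdots t^{j_q}$ and pairing the slice $t^{j_k}$ with the suffix slice $(TW_{s_k})^{j_k}$, I would apply the length-preserving bijection $\{t^{j}(TW)^{j}\}\leftrightarrow\{(tTW)^{j}\}$ of Remark~\ref{rem-bijectsToRegular}: the $k$-th band, summed over $j_k\ge 0$, then has growth $\sum_{j_k\ge 0}(x^{2}W_{s_k}(x))^{j_k}=R_{s_k}(x)$, while forcing $j_k\ge 1$ gives $R_{s_k}(x)-1$. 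Since $L_m$ consists of geodesic normal forms, each word of the piece decomposes uniquely into its prefix, its cap, and its successive bands, so growth series multiply over these unambiguous concatenations (the fact used in the proof of the gFSA growth proposition), and the growth of the piece is $V_{i_1}\cdot(\text{band}_1)\cdot U_{i_2}\cdot(\text{band}_2)\cdots U_{i_q}\cdot(\text{band}_q)$.

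The remaining issue --- and the one I expect to be the main obstacle --- is to decide for each band whether $j_k=0$ is allowed, i.e.\ to make the grand union genuinely disjoint. For the first band and the last band one checks that $j_k=0$ creates no clash (for instance $V_{i_1}U_{i_2}$ contains no legal cap spelling, so it cannot be confused with a word of the composition that merges blocks $1$ and $2$, and a trailing $U_{i_q}$ cannot be absorbed elsewhere), so those bands keep the factors $R_{s_1}=R_{i_1}$ and $R_{s_q}=R_m$. For an interior band, $2\le k\le q-1$, however, setting $j_k=0$ turns $U_{i_k}(TW_{s_k})^{0}U_{i_{k+1}}$ into $U_{i_k}U_{i_{k+1}}$, the very same word produced by the coarser composition obtained from $I$ by merging blocks $k$ and $k+1$; this is exactly the ambiguity preventing the generalized FSA $M$ of \S\ref{sec-lang} from having the unique decomposition property, and it is resolved by forcing each interior loop to be traversed at least once, i.e.\ $j_k\ge 1$, which replaces each interior $R_{s_k}$ by $R_{s_k}-1$. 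One then has to verify that this ``merge every interior zero-band'' rule is the only identification required --- that with it the union is disjoint and still exhausts $L_m\setminus U_m$ --- and that the within-block generator orderings coming from $\bigcup_{\phi\in\Sigma_m}$ do not inflate the count past the multinomial coefficient; this is the genuinely combinatorial part. Granting it, summing $\binom{m}{i_1,\dots,i_q}\,V_{i_1}R_{i_1}\big(\prod_{1<k<q}U_{i_k}(R_{s_k}-1)\big)U_{i_q}R_m$ over all compositions of $m$ and adding $U_m$ yields the stated formula, the rest being geometric-series bookkeeping.
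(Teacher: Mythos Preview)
Your proposal is correct and follows essentially the same approach as the paper: read the growth off $L_m$ via Theorem~\ref{LanguageTheorem}, group by compositions of $m$ to get the multinomial coefficient, and handle the adjacent-$U$ ambiguity by replacing interior $R_{s_k}$ with $R_{s_k}-1$. In fact your write-up is more careful than the paper's own proof, which only sketches the multinomial count and the $R-1$ correction without spelling out the prefix-splitting bijection or why the first and last bands may take $j_k=0$.
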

\begin{proof}
From Theorem \ref{LanguageTheorem}, we have that the language $L_m$ given in Definition \ref{def:language} 
yields unique geodesic representatives of elements in the positive orthant.
For each fixed $k$, all permutations of $U_k$, $V_k$, and $R_k$ have the growth series $U_k(x)$, $V_k(x)$ and $R_k(x)$ respectively,
and the multinomial coefficient counts how many copies of each possible expression is achieved 
by counting the number of ways to partition the set of generators into subgroups of sizes given by the partition $(i_1, \ldots, i_q)$.

Notice that the formula works in the $m=1, 2$ and $3$ cases quite well, 
that the number of terms is equal  to one plus the number of ordered partitions of $m$,
which comes out to $1 + 2^{m-1}$.
For $m \geq 3$, we must remove the possibility of double counting the case where $U$ sets appear next to each-other,
which forces the middle terms of the product to have $(R(x) - 1)$. 
This insures we do not double count things of the form $U_{a} U_{b} = U_{ab}$, which can appear when $m > 2$ .
\end{proof}



\begin{cor}
For all dimensions $m$, the Positive Series $P_m(x)$ is rational.
\end{cor}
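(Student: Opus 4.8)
The plan is to read rationality directly off the closed-form expression for $P_m(x)$ established in the preceding theorem. First I would recall that every ingredient appearing in that formula is a rational function of $x$: the polynomials $U_k(x) = x^k$ and the cap polynomials $V_k(x)$ are manifestly rational (indeed polynomial), and by the Proposition in Section~\ref{sec-conj} the prefix/suffix series is $R_k(x) = \left(1 - \sum_{i=0}^k \binom{k}{i} 2^i x^{i+2}\right)^{-1}$, which is the reciprocal of a polynomial and hence rational. Consequently each $R_k(x) - 1$ is rational as well.

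Next I would observe that the formula for $P_m(x)$ is built from these rational functions using only finitely many operations that preserve rationality: the outer sum ranges over the finitely many ordered partitions $i_1 + \cdots + i_q = m$; for each such partition we form a finite product of the factors $V_{i_1}$, $R_{i_1}$, the middle factors $U_{i_k}(R_{\sum_{j\le k} i_j} - 1)$, the final $U_{i_q}$, and $R_m$, scaled by the integer multinomial coefficient $\binom{m}{i_1,\dots,i_q}$; and finally we add the term $U_m = x^m$. Since the rational functions form a field (in particular closed under finite sums, finite products, and scalar multiplication), $P_m(x)$ is rational.

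There is essentially no obstacle here — the content has all been front-loaded into the structural theorem computing $P_m(x)$ and into the explicit rational form of $R_m(x)$. The only thing worth a sentence of care is noting that no division by a potentially-zero element occurs: the only denominators introduced are $1 - x^2 W_k(x) = 1 - x^2(1+2x)^k$ coming from the $R_k$, and these are nonzero as formal power series (their constant term is $1$), so each $R_k(x)$ is a well-defined element of the field of rational functions. With that remark in place the corollary is immediate.

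\begin{proof}
By the previous theorem, $P_m(x)$ is expressed as a finite $\mathbb{Z}$-linear combination of finite products of the functions $U_k(x)$, $V_k(x)$, $R_k(x)$, and $R_k(x) - 1$ for $k \le m$. Each $U_k(x) = x^k$ is a polynomial, each $V_k(x)$ is the polynomial given in Corollary~\ref{cap-formula}, and each $R_k(x) = \left(1 - x^2(1+2x)^k\right)^{-1}$ is the reciprocal of a polynomial with nonzero constant term, hence a well-defined rational function; thus $R_k(x) - 1$ is rational too. Since the rational functions of $x$ form a field, any finite sum of finite products of rational functions (with integer coefficients) is again rational. Therefore $P_m(x)$ is rational for every $m$.
\end{proof}
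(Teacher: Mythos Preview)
Your proof is correct and matches the paper's approach exactly: the paper states this corollary without proof immediately after the formula for $P_m(x)$, treating it as an immediate consequence of the fact that the formula expresses $P_m(x)$ as a finite combination of the rational functions $U_k$, $V_k$, and $R_k$. You have simply spelled out that implicit one-line argument in full.
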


We can now put this together to prove our first theorem.

\begin{proof}[Proof of Theorem \ref{allrational}]
$\Z^m$ has $2m$ positive axis, $2(m)(m-1)$ positive coordinate planes, and in general ${m \choose i} 2^i$ positive $i$-dimensional orthants,
which have growth series series $P_i(x)$.
Combining those, we have
$$\Sm{m}(x) = 1 + 2m P_1(x) + 2(m)(m-1)P_2(x) + \ldots + 2^m P_m(x).$$
Since all $P_i(x)$ are rational, the subgroup growth series of $\Z^m$ is a finite sum of constant multiples of rational functions, 
and is thus rational.
\end{proof}

\begin{remark}
The prefix/suffix series and cap polynomials we computed in sections
\S \ref{sec-conj} and \S \ref{sec-cap} depend on our language
of geodesics for $BS(1,3)$.  $\Sm{m}(x)$ and $P_m(x)$ only depend on
the generating set.
\end{remark}
As $BS(1,3)$ is not uniquely geodesic, we could use a different
language of geodesics, and would then attain different $V_m$s, $R_m$s,
and $U_m$s.  However, when we put them together to get the $P_m(x)$
and $\Sm{m} (x)$, you would attain the same things.

\section {The Growth Series of the Full Group} \label{sec-fullgroup} 

Now that we have calculated the subgroup growth series, we use
recursive arguments to piece together copies of horocycles to complete
the proof of Theorem~\ref{grouprational}.  We follow the method of
\cite{FKS-GBSG1} (which covers the $m=1$ case) and \cite{FredenOHGGT}
(which leaves the $m=2$ case as an exercise for the reader).

\subsection{Relative growth} 
We start by defining `coset level' for our groups in the same manner:
Cosets with $\tau \geq 0$ are said to be at level $0$, and cosets with
$\tau < 0$ are said to be at level $\tau$.  This is equivalent to the
recursive definition that
\begin{itemize}
\item The horocyclic subgroup has level 0.
\item A coset directly above a coset of level 0 has level 0.
\item The coset $T\Z^m$ has level -1.
\item A coset directly below a coset of level $-n<0$ has level $-n-1$.
\item A coset directly above a coset of level $-n<0$ has level $-n+1$.
\end{itemize}

\begin{lem} ~

\begin{itemize}
\item For each coset $\gamma\Z^m$ there is a unique element $w$ which
  realizes the distance $d(1,\gamma\Z^m)$.
\item The geodesics for $w$ are unique up to commuting $\Z^m$
  generators.
\item The geodesic $w$ is empty or ends in $t$ or $T$.
\item The letter $T$ can only occur as a prefix $T^n$ of $w$.
\end{itemize}
\end{lem}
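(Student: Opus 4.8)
The plan is to prove each of the four bullets in turn, reducing everything to the structure theory already established for the horocyclic subgroup and its positive orthants, together with a careful analysis of how geodesics to a coset must behave with respect to the height function $\tau$.

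First I would set up the basic geometry. Fix a coset $\gamma\Z^m$ and let $n$ be such that $\tau(\gamma\Z^m)=-n$ with $n\geq 0$ (the level, as just defined); the case $n=0$ is where $\gamma\Z^m$ sits in the ``upper half'' and will be handled by reference to the positive-orthant language $L_m$ and Theorem~\ref{LanguageTheorem}. For the last bullet, I would argue that any geodesic $w$ from $1$ to (the closest point of) $\gamma\Z^m$ can be taken so that $T$ occurs only in an initial block: using the same commutation trick from the proof of Theorem~\ref{LanguageTheorem} — namely that $u_0 T u_1 t u_2$ forces $u_1$ to evaluate into $\langle a_1^3,\dots,a_m^3\rangle$, which can be respelled as conjugations and produces a forbidden $Tt$ cancellation — no geodesic has a $T$ preceding a $t$. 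Combined with the observation that a geodesic realizing distance to a coset cannot end in a horocyclic generator unless it could be shortened (one can always commute a trailing $a_i^{\pm1}$ into the coset representative, since we only care about reaching the coset, not a specific point), this gives both the third and fourth bullets: $w$ is empty or ends in $t^{\pm1}$, and any $T$'s are forced to the front as a block $T^n$.

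Next, for the first bullet (uniqueness of the distance-realizing element $w$), I would induct on the level $n$. For $n=0$, a geodesic to $\gamma\Z^m$ is a geodesic to some horocyclic-subgroup coset of the form $t^n\langle\text{stuff}\rangle$, and uniqueness follows because $L_m$ gives \emph{unique} geodesic normal forms for the positive orthant (and by symmetry for all orthants), so the shortest such has a unique endpoint $w$ modulo nothing — the only ambiguity is reordering commuting $\Z^m$-generators, which is exactly the content of the second bullet. For $n>0$, a geodesic to a level $-n$ coset must pass through a level $-(n-1)$ coset directly above it; I would peel off the final $T$ and apply the inductive hypothesis, checking that the ``add a $T$'' operation preserves uniqueness of the minimizer. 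The second bullet then follows by noting that once the $t$/$T$ skeleton of $w$ is pinned down (which the other three bullets do), the horocyclic letters in between are determined as a group element, hence unique up to the abelian relations $[a_i,a_j]=1$.

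The hard part will be the uniqueness claim in the first bullet, specifically ruling out two genuinely different geodesics of the same length reaching two different points of the same coset $\gamma\Z^m$ — this is where one must use that $BS(1,3)$ (and hence each coordinate projection) has a \emph{unique} geodesic normal form in the horocyclic direction, so that a geodesic ``to the coset'' is forced through a unique lowest-cost gateway. I expect the bookkeeping to require care in the level-$0$ versus negative-level cases, since in the negative-level case the relevant optimization is over which horocyclic element to conjugate by before descending, and one must check that the expanding nature of $g\mapsto g^3$ makes this optimum unique. I would organize the argument so that the positive-orthant language $L_m$ does all the heavy lifting, invoking Theorem~\ref{LanguageTheorem} rather than re-deriving geodesicity from scratch.
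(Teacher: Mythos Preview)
Your induction scheme is on the wrong parameter and breaks in both the base case and the inductive step.  The paper inducts on \emph{tree distance} in the coset tree, not on level.  At level $0$ there are infinitely many cosets (every coset with $\tau\ge 0$), so treating this as a base case still requires an inductive argument; your appeal to $L_m$ does not supply one, since $L_m$ produces geodesic normal forms for \emph{elements of} $\Z^m$ (paths that go up and come back down), not for stems of level-$0$ cosets (paths that go up and stay up).  These are different objects, and uniqueness of one does not give uniqueness of the other.

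For the inductive step you write that ``a geodesic to a level $-n$ coset must pass through a level $-(n-1)$ coset directly above it'' and propose to peel off a final $T$.  This is false: the stem of a level $-n$ coset need not end in $T$.  For instance $T^{n+1}at$ is a stem of a coset at level $-n$, and its predecessor in the coset tree is $T^{n+1}\Z^m$ at level $-(n+1)$, not $-(n-1)$.  So your induction would have to move both up and down in level, which is exactly what tree distance handles cleanly: every coset has a unique neighbor one step closer in the tree, and that neighbor may sit at either higher or lower $\tau$.

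Separately, your argument for the fourth bullet has the direction reversed.  You conclude ``no geodesic has a $T$ preceding a $t$'', which would put all $t$'s before all $T$'s --- the opposite of the claim that $T$'s form an initial block.  The commutation trick from Theorem~\ref{LanguageTheorem} applies to geodesics of elements in $\Z^m$ and yields $t$-before-$T$ there; it does not transfer to coset representatives.  The paper's argument is both simpler and correct: a $T$ cannot be immediately preceded by $t$ (free cancellation) nor by $g\in\Z^m$ (since $gT\Z^m = Tg^3\Z^m = T\Z^m$, producing a shorter coset representative), and together with the inductive construction this forces all $T$'s to the front.
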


\begin{proof}
The third statement is immediate.  If $\gamma = \gamma' g$ with $g$ a
generator of $\Z^m$, then $\gamma'$ is a shorter and $\gamma'\Z^m =
\gamma\Z^m$.

To prove the first and second statements, recall that the cosets
$\gamma \Z^m$ form a tree.  For each coset, we define its {\em tree
  distance}, $d_\Tree(1,\gamma\Z^m)$ from the identity to be its
distance in the coset tree from the identity coset.  Note that
$d_\Tree(1,\gamma\Z^m)$ is rarely equal to $d(1,\gamma\Z^m)$.  For
example in the Baumslag-Solitar group, $d(1,at\Z) = 2$ while
$d_\Tree(1,at\Z)=1$.

We now induct on tree distance.  The statement is clearly true for the
horocyclic subgroup which is at tree distance 0.  We now consider a
coset $\gamma\Z^m$ at tree distance $d+1$ from the identity.  There is
a unique ray in the coset tree from the identity to $\gamma\Z^m$.
Suppose $\beta\Z^m$ is the unique coset on this ray at tree distance
$d$ from the identity.  By induction, there is a unique minimal length
coset representative which we may take to be $\beta$ and geodesics for
this element are unique up to commutation of letters in $\Z^m$. If
$\beta$ is the identity, we know that $\gamma\Z^m$ must be either $T\Z^m$
or $ut\Z^m$ where $u\in W_m$.  In these cases $T$ or $ut$ realizes the
unique shortest representative, and the geodesics for this element are
now unique up to commutation of the letters of $u$.

In the case where $\beta$ is not the identity, $\beta$ ends in either
$T$ or $t$.  In the first case $\gamma\Z^m = \beta T\Z^m$ and $\beta
T$ is the unique shortest coset element.  In the second case
$\gamma\Z^m = \beta u t \Z^m$ with $u\in W_m$.  Further, $\beta u t$ is
a geodesic spelling of the coset element closest to the identity and
once again unique up to commutation in $\Z^m$.

To prove the final statement notice that $T$ can not be preceded by
$t$ since this would not be reduced.  Neither can it be preceeded by a
generator $g \in \Z^d$ since $\gamma g T \Z^m = \gamma T \Z^m$, thus
producing a shorter coset representative.
\end{proof}

\begin{defn}
If $w$ is the shortest element of $w\Z^m$, we will say that $w$ is a
{\em stem}.  If $w$ is a stem, we will take $b(w,r)$ to be the number
of things in $w\Z^m$ whose length is $r + \ell(w)$.  We will call this
the {\em relative growth} of $w\Z^m$.
\end{defn}

\def\id{e}

It will turn out that this depends only on the level of $w$, and we
will supercede this notation by replacing $w$ with its level.
Previously, we have denoted the identity by 1.  However, in the
  upcoming change of notation, it will be replaced by its level, 0.
  Consequently, we now denote the identity by $\id$ to
  avoid confusion between numbers and group elements.

\begin{lem}\label{lem:above}
Suppose that $b(w,r) = b(\id ,r)$ and that $\gamma\in W_mt$, so that
$w\gamma\Z^m$ is immediately above $w\Z^m$.  Then $b(w\gamma,r) =
b(\id ,r)$.
\end{lem}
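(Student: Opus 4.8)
The plan is to set up a length-preserving bijection between the elements of $w\gamma\Z^m$ and the elements of $w\Z^m$ (equivalently, a bijection with $\Z^m$ itself) that shifts lengths by the constant $\ell(w\gamma) - \ell(w) = \ell(\gamma) = \ell(\id \gamma)$. Since $b(w,r)$ counts elements of $w\Z^m$ of length $r + \ell(w)$, and the hypothesis $b(w,r) = b(\id,r)$ says this agrees with the relative growth of the horocyclic coset itself, it suffices to show that the relative growth $b(w\gamma, \cdot)$ of the coset immediately above agrees with $b(\id \gamma, \cdot)$, the relative growth of the corresponding coset $\gamma\Z^m = ut\Z^m$ immediately above the horocyclic subgroup. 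In other words I want to reduce the statement ``relative growth is preserved on passing to a coset immediately above'' to the single instance where the lower coset is the horocyclic subgroup itself.

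First I would invoke the previous lemma: $w$ is a stem, and every geodesic for $w$ ends in $t$ (it cannot be empty here and it cannot end in $T$ since $w\Z^m$ lies above or at level $0$ by the ``immediately above'' hypothesis combined with the hypothesis $b(w,r)=b(\id,r)$, which forces $w\Z^m$ to be at level $0$). Then $w\gamma = w u t$ with $u \in W_m$, and $w\gamma$ is again a stem. Now take any element $w\gamma v$ of the coset $w\gamma\Z^m$, with $v \in \Z^m$, and compare its geodesic length to the geodesic length of $\id\gamma v = utv \in ut\Z^m$. The key claim is that $\ell_\calG(w\gamma v) - \ell_\calG(w) = \ell_\calG(\gamma v) - \ell_\calG(\id) = \ell_\calG(utv)$, i.e. that a geodesic for $w\gamma v$ factors as (a geodesic for $w$)(a geodesic for $utv$ relative to its stem). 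The ``$\le$'' direction is clear by concatenation. For ``$\ge$'', I would argue that any geodesic for $w\gamma v$ must first spell out something that evaluates into $w\Z^m$ before it can leave it: since $w\Z^m$ is at level $0$ and $w\gamma\Z^m$ is one step above it in the coset tree, the unique ray in the coset tree from $\id\Z^m$ to $w\gamma\Z^m$ passes through $w\Z^m$, and a geodesic word, read left to right, traces a path in the coset tree that must reach $w\Z^m$ before $w\gamma\Z^m$; using the stem property of $w$ and uniqueness (up to $\Z^m$-commutation) of its geodesics from the previous lemma, the portion of the geodesic up to the last visit to $w\Z^m$ has length at least $\ell_\calG(w)$, and the remaining portion spells a $ut$-times-$\Z^m$ word whose length is at least the relative length of the corresponding element of $ut\Z^m$.

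The cleanest way to package this is: for a stem $w$ ending in $t$, multiplication by $w$ on the left gives an isometric embedding of the ball structure of the ``upward'' part of the group — more precisely, $\ell_\calG(w z) = \ell_\calG(w) + \ell_\calG^{\uparrow}(z)$ for all $z$ in the sub-semigroup generated by $\{a_i, A_i, t\}$ together with downward excursions that stay weakly above the level of $w$, where $\ell_\calG^\uparrow$ is word length in the corresponding model space truncated below the starting horosphere. Applying this with $z = utv$ (whose geodesics, by the earlier coset lemma, never use $T$ except possibly not at all here since $utv$ is at level $0$) yields exactly $b(w\gamma, r) = b(ut, r)$, and then $b(ut,r) = b(\id\gamma, r)$ by definition while the hypothesis gives what we want after noting $b(\id\gamma,r)$ is itself computed the same way from $b(\id,r)$; a short separate check handles the base instance. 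I would then conclude $b(w\gamma, r) = b(\id, r)$.

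\textbf{Main obstacle.} The delicate point is the lower bound $\ell_\calG(w\gamma v) \ge \ell_\calG(w) + \ell_\calG(\gamma v)$: one must rule out a shortcut in which a geodesic for $w\gamma v$ dips \emph{below} the level of $w$ (entering cosets of negative level relative to $w$'s horosphere) and re-emerges, or otherwise fails to respect the tree-structure factorization. The resolution should come from the tree of cosets together with the third and fourth bullets of the previous lemma (geodesics end in $t$ or $T$, and $T$ only occurs as an initial block): any excursion strictly below $w\Z^m$ would force a $T$-block that cannot be cancelled without backtracking, contradicting that $w\Z^m$ sits at level $0$ on the geodesic tree-ray to $w\gamma\Z^m$; making this rigorous is where the real work lies, and it is essentially the same ``no backtracking helps'' phenomenon that underlies uniqueness of stems.
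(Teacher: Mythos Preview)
Your reduction to the base case is sound: the factorization $\ell_\calG(w\gamma v) = \ell_\calG(w) + \ell_\calG(\gamma v)$ does follow from the hypothesis together with the tree structure. Indeed, any geodesic for $w\gamma v$ must visit $w\Z^m$; if its last visit is at $wy$, then the prefix has length at least $\ell_\calG(wy) = \ell_\calG(w) + \ell_\calG(y)$ (here is where you use the hypothesis, upgraded from a counting statement to the pointwise statement $\ell_\calG(wy)=\ell_\calG(w)+\ell_\calG(y)$ by the obvious minimal-counterexample argument), and the suffix has length at least $\ell_\calG(y^{-1}\gamma v)$, whence the triangle inequality $\ell_\calG(\gamma v) \le \ell_\calG(y) + \ell_\calG(y^{-1}\gamma v)$ finishes the lower bound. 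So the ``main obstacle'' you flag is in fact not the obstacle.

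The genuine gap is the base case you wave off. Your factorization yields $b(w\gamma,r)=b(\gamma,r)$, and you then need $b(\gamma,r)=b(\id,r)$ for $\gamma=ut\in W_mt$. When $w=\id$ your factorization degenerates to a tautology, so it gives no information here; and the claim $\ell_\calG(utz')=\ell_\calG(ut)+\ell_\calG(z')$ for all $z'\in\Z^m$ is exactly the substantive content of the lemma. It is not a ``short separate check'': one must rule out geodesics for $utz'$ that dip below the horocycle or otherwise beat $ut\cdot(\text{geodesic for }z')$, and there is no soft tree argument for this because the identity coset has no stem prefix to peel off. The paper proves it by exploiting the explicit recursive structure of the normal-form language $L_m$: if $x'\in L_m$ is the geodesic for $z'$, then $t\,x'\,T$ is again a word of $L_m$ (one checks this case by case on $U_m$ and $L_{m,n}$), hence geodesic; therefore its prefix $t\,x'$ is geodesic, giving $\ell_\calG(tz')=1+\ell_\calG(z')$, and a commutation trick handles $\gamma=ut$ with $u\ne\epsilon$. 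That $L_m$ is closed under $x'\mapsto t x' T$ is precisely the mechanism you are missing.
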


\begin{proof}
First, consider the case $\gamma = t$.  Every element element of
$w\Z^m$ has a geodesic $wx$ where $x \in L_m$.  This follows from the
fact that relative growth in this coset is the same as growth in the
subgroup.  Now each element of $L_m$ that has the form $tuTv$ with
$v\in W_m$ gives a geodesic $wtu$ for an element of $wt\Z^m$.  With the
exception of finitely many cases, this gives the elements of $L_m$,
showing that in these cases $wz$ has the same relative length as
$wtz$.  (Here $z\in \Z^m$.)  There remain finitely many cases where
the geodesic of an element does not begin with $t$.  These are the
elements of $\cup_m U_m^2$.  We claim that for $u\in U_m^2$, $w \gamma
t u$ is geodesic.  This is because in this case $tu$ cannot be
shortened either by writing $tu = t^2u'Tu''$ or by writing $tu =
u'tu''$ with $u' \ne \epsilon$.

We now consider the case where $\gamma = vt$ with $v\in W_m$.  We
consider those words of $L_m$ begining in $t$ which end in $v$ and
commute the $v$ to the front of each word.  The resulting word is
still geodesic and begins $vt$.  It therefore passes through the stem
of $w\gamma$ and we proceed as before.
\end{proof}

\begin{defn}
For $w$ a stem, we take $B_w(x) = \sum_{r=0}^\infty b(w,r) x^r$.
\end{defn}

It is the fate of $w$ to be replaced by its level in this notation as
well.

\begin{lem}\label{lem:levelDown}
$B_{T^n}(x) = (W_m(x))^n B_{\id}(x)$.
\end{lem}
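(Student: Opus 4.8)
The plan is to induct on $n$, with the base case $n=0$ being the tautology $B_{\id}(x) = (W_m(x))^0 B_{\id}(x)$. For the inductive step, I would show directly that $B_{T^{n+1}}(x) = W_m(x) \cdot B_{T^n}(x)$, which combined with the inductive hypothesis $B_{T^n}(x) = (W_m(x))^n B_{\id}(x)$ gives the result. So the heart of the matter is the single statement: passing from the stem $T^n$ to the stem $T^{n+1}$ (i.e.\ moving one coset down in the tree) multiplies the relative growth series by $W_m(x)$.

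To see this, first observe that $T^{n+1}$ is indeed the stem of the coset $T^{n+1}\Z^m$: by the preceding lemma, in a stem the letter $T$ can only occur as a prefix $T^k$, and clearly no shorter representative of $T^{n+1}\Z^m$ exists. Now I would analyze which elements of $T^{n+1}\Z^m$ have relative length $r$. An element of $T^{n+1}\Z^m$ is $T^{n+1}z$ with $z\in\Z^m$; I want to count those with $\ell_{\calG}(T^{n+1}z) = (n+1) + r$. The key point is that because $T^{n+1}$ is a stem, geodesics for $T^{n+1}z$ must begin with the prefix $T^{n+1}$ (any geodesic of an element in this coset passes through the stem), and then the remaining suffix lies in $\Z^m$ spelled in the alphabet $\{a_i,A_i\} \cup \{t,T\}$, but with $\tau$ constrained so that the first step \emph{down} is not allowed to be wasted. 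Concretely, the suffix can itself go back up (using $t$) and come back down; the combinatorics is exactly that of spellings in the horocyclic subgroup, \emph{except} that there is an extra ``free'' layer $W_m$ available immediately: one may insert any element $w\in W_m$ right after $T^{n+1}$, at cost $\ell(w)$, before resuming the geodesic of the element of $T^n\Z^m$ directly above. This is the same surgery used in Lemma~\ref{lem:above} run in reverse: words of $L_m$ (or of the relevant relative language at level $-n$) of the form $tuTv$ with $v\in W_m$ correspond to geodesics at the lower level obtained by stripping the leading $t$ and commuting the trailing $v$, and the finitely many exceptional elements (those of $\cup_m U_m^2$, whose geodesics don't begin with $t$) get prepended by an element of $W_m$ with the observation that $tu$ for $u\in U_m^2$ admits no shortening of the forms $t^2u'Tu''$ or $u'tu''$ with $u'\neq\epsilon$, so no cancellation occurs. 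Summing over the $W_m(x)$ worth of choices for this inserted layer and over the $B_{T^n}(x)$ worth of choices for the element above yields $B_{T^{n+1}}(x) = W_m(x)\,B_{T^n}(x)$, using that the decomposition (inserted $W_m$ element)(geodesic at level $-n$) is unique — which follows from uniqueness of the stem and uniqueness of geodesics up to $\Z^m$-commutation established in the previous lemma.

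The main obstacle I anticipate is bookkeeping the finitely many exceptional elements cleanly and verifying the uniqueness of the decomposition — i.e.\ that distinct pairs $(w, \text{suffix})$ never produce the same group element with the same length, and that every element of $T^{n+1}\Z^m$ arises exactly once this way. This is essentially the ``unique decomposition property'' for the relevant generalized FSA, and the same care taken in the proof of Lemma~\ref{lem:above} — checking that the $W_m$-prefix genuinely adds its full length and is not absorbed by a $Tt$ cancellation or a rewriting $Ta_i^3 = a_i t$ — carries over here verbatim, so I would cite that lemma rather than redo the case analysis.
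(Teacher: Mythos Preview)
Your induction scaffold matches the paper exactly: base case $n=0$, and for the step reduce to $B_{T^{n+1}}(x)=W_m(x)\,B_{T^n}(x)$. The issue is in how you argue that identity.

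The claim ``because $T^{n+1}$ is a stem, geodesics for $T^{n+1}z$ must begin with the prefix $T^{n+1}$'' is false. The preceding lemma says only that in the geodesic for the \emph{stem} itself the letter $T$ occurs as a prefix; it says nothing about geodesics for other coset elements. Concretely (take $m=1$): $T^2 a^9$ has the geodesic $T a^3 T$, which does \emph{not} begin with $T^2$. So your decomposition ``insert $w\in W_m$ right after $T^{n+1}$, then resume a level-$(-n)$ geodesic'' never gets off the ground --- you are not at $T^{n+1}$ after the prefix in the first place, and even if you were, $T^{n+1}wt$ lands in the coset $T^n\Z^m$ only when $w=\epsilon$, so there is no clean ``element of $T^n\Z^m$ directly above'' to resume. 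The appeal to Lemma~\ref{lem:above} and its exceptional set $\cup_m U_m^2$ is likewise misplaced: that lemma handles the subtler situation of going \emph{up} a level; going down is simpler and needs none of that machinery.

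The paper runs the decomposition from the \emph{other} end. For each element $T^n g\in T^n\Z^m$ and each $u\in W_m$, set $T^n g\,T\,u\in T^{n+1}\Z^m$; one has $\ell(T^ngT)=\ell(T^ng)+1$ and $\ell(T^ngTu)=\ell(T^ngT)+\ell(u)$, so the relative length increases by exactly $\ell(u)$. Since $(g,u)\mapsto g^3u$ is a bijection $\Z^m\times W_m\to\Z^m$, this exhibits $T^{n+1}\Z^m$ as a $W_m$-worth of copies of $T^n\Z^m$ with the required length shift, giving $B_{T^{n+1}}(x)=W_m(x)\,B_{T^n}(x)$ directly. If you want to salvage your write-up, replace the ``prefix $T^{n+1}$ then insert $w$'' picture with this ``append $Tu$ at the end'' picture; everything else in your outline then goes through without needing Lemma~\ref{lem:above}.
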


\begin{proof}
The proof is an induction on $n$ and clearly holds when $n=0$.

Now suppose inductively that the formula holds for $n$.  Consider an
element $T^n g$.  Directly below it lies the element $T^ngT \in
T^{n+1}\Z^m$.  We then have $\ell(T^ngT) = \ell(T^n g)+1$, so these
two have the same relative length.  Further, for each $u \in W_m$ with
$\ell(u) = k$, we have $\ell(T^n g T u) = \ell(T^ngT)+k$.  It follows
that each element of $T^n \Z^m$ produces a $W_m$'s worth of elements in
$T^{n+1}\Z^m$, with relative lengths increased by the length of the
corresponding element of $W_m$.  This give $B_{T^{n+1}}(x) = W_m(x)
B_{T^n}(x)$ implying $B_{T^{n+1}}(x) = (W_m(x))^{n+1} B_{\id}(x)$ as required.
\end{proof}

\begin{lem}\label{lem:levelDownAndUp}
Suppose $j \le n$ and consider a stem $\gamma \in T^n (W_mt)^j$.  Then
$B_\gamma(x) = (W_m(x))^{n-j} B_{\id}(x)$.
\end{lem}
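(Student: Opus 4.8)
The plan is to run an induction on $j$, using Lemma~\ref{lem:levelDown} as the base case ($j = 0$) and Lemma~\ref{lem:above} as the inductive engine. The statement to prove is that any stem $\gamma \in T^n(W_mt)^j$ with $j \le n$ satisfies $B_\gamma(x) = (W_m(x))^{n-j}B_{\id}(x)$. When $j = 0$ the stem is $T^n$ itself, and Lemma~\ref{lem:levelDown} gives exactly $B_{T^n}(x) = (W_m(x))^n B_{\id}(x)$, which matches the formula. So the content is the inductive step.

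For the inductive step, I would fix $j < n$, assume the claim for all stems in $T^n(W_mt)^j$, and take a stem $\gamma \in T^n(W_mt)^{j+1}$. First I need to observe that $\gamma$ factors as $\gamma = \gamma' \delta$ where $\gamma' \in T^n(W_mt)^j$ is a stem and $\delta \in W_mt$, so that $\gamma\Z^m$ is the coset immediately above $\gamma'\Z^m$ in the coset tree. (Here I should check that $\gamma'$ really is a stem — i.e. the shortest element of its coset — which should follow from the structure lemma asserting that coset representatives closest to the identity in the branch through $T^n(W_mt)^j$ are exactly words of that form ending in $t$; the final bullet of that lemma guarantees no further $T$'s appear, and prefixing a $T^n$ then reading off $W_mt$ blocks gives a reduced geodesic.) By the inductive hypothesis, $b(\gamma', r)$ equals $(W_m(x))^{n-j}B_{\id}(x)$ as a series; but Lemma~\ref{lem:above} is stated for the hypothesis $b(w,r) = b(\id,r)$, i.e. relative growth equal to that of the horocyclic subgroup itself, not to a multiple of it. The cleanest fix is to prove a mild generalization of Lemma~\ref{lem:above}: if $B_w(x) = c(x)B_{\id}(x)$ for some power series $c(x)$ and $\gamma_0 \in W_mt$, then $B_{w\gamma_0}(x) = c(x)B_{\id}(x)$ as well. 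This is immediate from the proof of Lemma~\ref{lem:above}, which is really an argument that passing up one level by an element of $W_mt$ preserves the relative growth series verbatim (each geodesic $wx$ with $x = tuTv \in L_m$, $v \in W_m$, yields a geodesic $w\gamma_0 u$ of the same relative length, modulo the finitely many $\cup_m U_m^2$ exceptions which are handled directly); nothing in that argument uses that the starting series is $B_{\id}$ rather than a scalar-in-$x$ multiple of it.

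Granting the generalized version, the induction closes: with $c(x) = (W_m(x))^{n-j}$ we get $B_\gamma(x) = (W_m(x))^{n-j}B_{\id}(x) = (W_m(x))^{n-(j+1)}\cdot W_m(x) \cdot B_{\id}(x)$ — wait, I must be careful with bookkeeping here: moving \emph{up} a level should not change the series at all (that is the whole point of Lemma~\ref{lem:above}), so $B_\gamma(x) = (W_m(x))^{n-j}B_{\id}(x)$, and I need this to equal $(W_m(x))^{n-(j+1)}B_{\id}(x)$. This forces $n - j = n - (j+1)$, which is false, so the naive reading is wrong and the resolution is that the exponent in the statement must be tracking net level, not the raw count $j$: a stem in $T^n(W_mt)^j$ sits at coset-tree level $-n + j$ (it went down $n$ then up $j$), and the correct reading of the formula is that the relative growth series depends only on this level $-(n-j)$, with levels $\le 0$ contributing $(W_m(x))^{\text{(depth)}}$. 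So in the induction the base stem $\gamma' \in T^n(W_mt)^j$ has level $-(n-j)$ with series $(W_m(x))^{n-j}B_{\id}$, and going up one level to $\gamma$ at level $-(n-j-1)$ leaves the series unchanged by (generalized) Lemma~\ref{lem:above}, giving $(W_m(x))^{n-j}B_{\id}$; but the formula for level $-(n-j-1)$, i.e. $j+1$ up-steps, reads $(W_m(x))^{n-(j+1)}B_{\id}$. The discrepancy means Lemma~\ref{lem:above} as used must actually \emph{also} be giving a factor, or — more likely — the induction should instead be downward: start from a stem in $T^{n}(W_mt)^{j+1}$, recognize that dropping the final $W_mt$ block is not available, so instead induct on $n - j$ directly by combining one application of Lemma~\ref{lem:levelDown}-style reasoning with Lemma~\ref{lem:above}. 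I expect the main obstacle to be exactly this reconciliation of indices — pinning down precisely whether a $(W_m(x))$ factor is gained or lost at each tree step — and the honest way to resolve it is to track everything through the single invariant ``level'', prove $B$ depends only on level, and then separately evaluate $B$ at each nonpositive level via Lemma~\ref{lem:levelDown}; once phrased that way the result for $\gamma \in T^n(W_mt)^j$ is just the special case ``level $= -(n-j)$'' of the level-only statement.
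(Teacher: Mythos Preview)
Your proposal has a genuine gap, and you yourself detect it when the bookkeeping fails. The issue is the attempt to use Lemma~\ref{lem:above} (or a generalization of it) as the inductive engine. That lemma's proof genuinely needs the hypothesis $B_w(x) = B_{\id}(x)$: it is precisely this equality that guarantees every element of $w\Z^m$ has a geodesic of the form $wx$ with $x \in L_m$, which is what lets you peel off the leading $t$ and pass to $w\gamma_0\Z^m$. If instead $B_w(x) = c(x)B_{\id}(x)$ with $c \ne 1$, geodesics in $w\Z^m$ are strictly shorter than $w$ followed by an $L_m$-word, and the argument collapses. So the ``mild generalization'' you propose is not available, and indeed the conclusion it would give ($B_\gamma = B_{\gamma'}$) contradicts the formula you are trying to prove. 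Your fallback---prove directly that $B$ depends only on level---is circular, since that is exactly the corollary drawn \emph{from} this lemma.

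The paper's proof avoids all of this by running the induction in the opposite direction across the edge between $\gamma'\Z^m$ and $\gamma\Z^m$. Rather than going \emph{up} from $\gamma'$ to $\gamma$ via Lemma~\ref{lem:above}, it goes \emph{down}: each element of $\gamma\Z^m$ sits directly above one element of $\gamma'\Z^m$ with the same relative length, and more generally produces a $W_m$'s worth of elements of $\gamma'\Z^m$ with relative lengths shifted by $\ell(w)$ for $w \in W_m$---exactly the mechanism of Lemma~\ref{lem:levelDown}. This yields $B_{\gamma'}(x) = W_m(x)\,B_\gamma(x)$. Now the inductive hypothesis gives $B_{\gamma'}(x) = (W_m(x))^{n-j+1}B_{\id}(x)$, and dividing by $W_m(x)$ finishes. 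The moral: the ``down'' relation $B_{\text{lower}} = W_m(x)\,B_{\text{upper}}$ holds unconditionally along any tree edge and is what drives the induction; the ``up'' relation of Lemma~\ref{lem:above} is a much more delicate statement that only kicks in once you already know $B = B_{\id}$.
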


\begin{proof}
The proof is an induction on $j$ and is clearly true for $j=0$.

Now we suppose that the formula holds for $j-1$.  We can then write
$\gamma = \gamma'\delta$ where $\delta \in W_mt$.  As before, directly
under each element of $\gamma \Z^m$, there is one element of
$\gamma'\Z^m$ with the same relative length.  Likewise, for each
element of $\gamma \Z^m$, there is a $W_m$'s worth of elements in
$\gamma'\Z^m$, each with relative length increased by the length of
the corresponding element of $W_m$.  It follows that $B_{\gamma'}(x) =
W_m(x) B_\gamma(x)$.  By induction we have $B_{\gamma'}(x) =
(W_m(x))^{n-j+1}B_{\id}(x)$ and the result now follows. 
\end{proof}

\begin{cor}
If $\gamma \Z^m$ has level $-n$, then $B_\gamma(x) = (W_m(x))^n B_{\id}(x)$.
In particular we may drop the use of stems in this notation in favor
of their levels, restating this as $B_{-n}(x) = (W_m(x))^n B_0(x)$.
\end{cor}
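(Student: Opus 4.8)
The plan is to deduce the statement from Lemmas~\ref{lem:levelDownAndUp} and~\ref{lem:above} together with the shape of a stem that is already implicit in the lemma on shortest coset representatives proved above.

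First I would record that every stem factors as $T^{n'}(W_m t)^j$ for some $n',j\ge 0$. This is the inductive content of the proof of that earlier lemma: a stem ending in $T$ can only be produced by appending $T$'s to the identity, so it is a power $T^{n'}$, and from a stem ending in $t$ (or from $T^{n'}$) one moves outward only by appending a block $ut$ with $u\in W_m$. Since $\tau$ is a coset invariant and $\tau\bigl(T^{n'}(W_m t)^j\bigr)=j-n'$, a coset with such a stem has level $j-n'$ when $j\le n'$ and level $0$ when $j\ge n'$.

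Now I would split on the level of $\gamma\Z^m$. If it is $-n$ with $n\ge 1$, the stem $T^{n'}(W_m t)^j$ satisfies $n'-j=n$, so $j< n'\le n'$, and Lemma~\ref{lem:levelDownAndUp} applies with these $n',j$ to give $B_\gamma(x)=(W_m(x))^{\,n'-j}B_{\id}(x)=(W_m(x))^{\,n}B_0(x)$. If the level is $0$ with $\tau(\gamma)=0$, i.e.\ $j=n'$, the same lemma gives $B_\gamma(x)=(W_m(x))^{0}B_{\id}(x)=B_0(x)$. Finally, if the level is $0$ with $\tau(\gamma)>0$, i.e.\ $j>n'$, then deleting the last $W_m t$ block of the stem passes to the coset immediately below; iterating this $j-n'$ times reaches a coset with $\tau=0$, for which we have just shown $b(\,\cdot\,,r)=b(\id,r)$, and climbing back up the same path appends a block of $W_m t$ at each step, so Lemma~\ref{lem:above} applies repeatedly and yields $b(\gamma,r)=b(\id,r)$, i.e.\ $B_\gamma(x)=B_0(x)=(W_m(x))^{0}B_0(x)$. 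In every case $B_\gamma(x)=(W_m(x))^{n}B_0(x)$, and since the right-hand side depends only on $n$ we may suppress the stem and write $B_{-n}(x)$ for this common value.

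The substance is all carried by the preceding lemmas; the only thing that needs care is the case split, arranged so that every stem is covered — in particular noticing that Lemma~\ref{lem:levelDownAndUp} already handles all cosets with $\tau\le 0$, while the cosets with $\tau>0$ descend to $\tau=0$ cosets by the up-moves governed by Lemma~\ref{lem:above}. The one point one might want to double-check is that a stem really does factor as $T^{n'}(W_m t)^j$: this is the inductive conclusion of the earlier lemma on coset representatives, whose substantive part is that the $\Z^m$-blocks between successive $t$'s all have coordinates in $\{-1,0,1\}$ (a coordinate of larger absolute value can be pushed through the following $t$ via $a_k^3 t=t a_k$, which cascades to the right and ultimately shortens the stem); that bookkeeping is the only real obstacle.
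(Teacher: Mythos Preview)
Your proof is correct and follows essentially the same approach as the paper's: negative-level cosets are handled directly by Lemma~\ref{lem:levelDownAndUp}, while level-$0$ cosets are obtained from the $j=n'$ case of that lemma together with repeated application of Lemma~\ref{lem:above}. Your version is more explicit about the case split and about why every stem factors as $T^{n'}(W_mt)^j$, but the underlying argument is the same.
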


\begin{proof}
For cosets at negative levels, this follows from Lemma~\ref{lem:levelDown}~and~\ref{lem:levelDownAndUp}, since every such coset is reached via a stem like those of Lemma~\ref{lem:levelDownAndUp} with $j<n$.  For cosets at level 0, it follows from Lemma~\ref{lem:levelDownAndUp} with $j=n$ together with Lemma~\ref{lem:above}.
\end{proof}

\subsection{Counting the cosets}

We now count the growth of the cosets.

\begin{defn}  
We take $\chi(-n,r)$ to count the number of cosets of level $-n\le 0$
at distance $r$ from the identity.  We take
$$ X_{-n}(x) = \sum_{r=0}^\infty \chi(-n,r) x^r.$$
\end{defn}

\begin{lem}
For $-n \le -1$, we have $\chi(-n-1,r+1) = \chi(-n,-r)$.  Hence, for $n\le -1$,  $X_{-n}(x) = x^{n-1} X_{-1}(x)$.
\end{lem}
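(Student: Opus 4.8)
The plan is to build, for every $n\ge 1$, an explicit bijection $\psi$ from the cosets of level $-n-1$ to the cosets of level $-n$ that decreases the distance to the identity by exactly $1$. (The displayed coefficient identity should read $\chi(-n-1,r+1)=\chi(-n,r)$.) From such a bijection the recursion $X_{-n-1}(x)=x\,X_{-n}(x)$ follows at once, and iterating it downward from $X_{-1}(x)$ gives $X_{-n}(x)=x^{\,n-1}X_{-1}(x)$ for all $n\ge 1$.

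First I would record that $d(\id,\gamma\Z^m)$ equals the length of the stem of $\gamma\Z^m$, so that $\chi(-n,r)$ counts the cosets of level $-n$ whose (unique) stem has length $r$. By the lemma on stems above, the stem $w$ of a coset has all of its occurrences of $T$ grouped in an initial block $T^{p}$; if the coset has strictly negative level then $p\ge 1$ (otherwise $\tau(w)\ge 0$), so the stem begins with $T$, and for level $-n-1$ one has in fact $p\ge n+1\ge 2$.

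The map is $\psi(\gamma\Z^m)=\gamma'\Z^m$, where $\gamma'$ is the stem of $\gamma\Z^m$ with its leading letter $T$ deleted; this is unambiguous since the stem is unique. I would check well-definedness and the level shift: if $w=Tw'$ is the stem of a coset $C$ of level $-n-1$, then $\tau(w')=\tau(w)+1=-n$, so $w'\Z^m$ has level $-n$, and $w'$ is itself the stem of $w'\Z^m$ — for if $u$ were a strictly shorter representative of $w'\Z^m$, then $Tu$ would represent $(Tw')\Z^m=C$ with length at most $\ell(u)+1\le \ell(w')=\ell(w)-1<\ell(w)$, contradicting minimality of the stem $w$ in $C$. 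So $\psi$ carries a coset at distance $r+1$ to one at distance $r$. For the inverse I would send a coset $D$ of level $-n$ with stem $w'$ to $(Tw')\Z^m$; because $n\ge 1$ the stem $w'$ already begins with $T$, so $Tw'$ is reduced and $(Tw')\Z^m$ has level $-n-1$. The one delicate point — the step I expect to take the most care — is that $Tw'$ is the stem of $(Tw')\Z^m$, since prepending $t$ to a short representative of that coset to recover a representative of $D=w'\Z^m$ need not reduce. To handle this, let $w''=T^{p''}v''$ (with $p''\ge 1$, by the stem lemma again) be the true stem of $(Tw')\Z^m$; then $tw''=T^{p''-1}v''$ is reduced and represents $tTw'\Z^m=w'\Z^m=D$, with $\ell(tw'')=\ell(w'')-1$, so if $w''$ were shorter than $Tw'$ it would yield a representative of $D$ shorter than its stem $w'$, a contradiction. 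Hence $Tw'$ is the stem of $(Tw')\Z^m$, and $D\mapsto(Tw')\Z^m$ is a two-sided inverse of $\psi$.

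Finally I would assemble the conclusion. The bijection gives $\chi(-n-1,r+1)=\chi(-n,r)$ for all $r\ge 0$ and all $n\ge 1$; and since a coset of level $-n-1$ has stem of length at least $n+1>0$, we have $\chi(-n-1,0)=0$, so
\[
X_{-n-1}(x)=\sum_{r\ge 0}\chi(-n-1,r+1)x^{r+1}=x\sum_{r\ge 0}\chi(-n,r)x^{r}=x\,X_{-n}(x),
\]
and therefore $X_{-n}(x)=x^{\,n-1}X_{-1}(x)$ for every $n\ge 1$ by induction. The hypothesis $-n\le -1$ is used precisely in the inverse map, where it guarantees the stem of $D$ begins with $T$; at level $0$ this fails (for instance, no coset of level $-1$ maps onto $t\Z^m$), which is why the statement is restricted to strictly negative levels.
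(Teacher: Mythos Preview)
Your proof is correct and follows the same idea as the paper's: the bijection is ``prepend $T$'' (or its inverse, ``delete the leading $T$''), which shifts level by one and distance by one. The paper's argument is a two-sentence sketch that simply asserts this map works; you have supplied the details it omits --- in particular, the verification that the image of a stem is again a stem in both directions, which is exactly the point the paper glosses over. You also correctly spotted the typo $\chi(-n,-r)$ for $\chi(-n,r)$.
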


\begin{proof}
To see this, decorate the tree of cosets with their levels and
distance from the identity.  Then notice that prepending $T$ carries
the cosets at level $-n$ to the cosets of level $-n-1$ while
increasing their length by 1.  It follows that for $-n \le -1$,
$$ X_{-n-1}(x) = x X_{-n}(x) $$ and hence for $-n \le -1$,
$$X_{-n}(x) = x^{n-1}X_{-1}(x).$$
\end{proof}

In what follows we will be using the observation that $\max(\{\ell(w)
\mid w \in W_m\})=m$.  

\begin{lem}
For $-n \le -1$,
\begin{align*}
X_{-1} &= \frac{p(x)}{1-x^2 W_m(x) } \\ X_{-n} &= \frac{p(x)
  x^{n-1}}{1-x^2 W_m(x) },
 \end{align*}
 where $p(x) $ is a polynomial defined below.
\end{lem}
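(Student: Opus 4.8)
The plan is to determine the stems of the level-$(-1)$ cosets explicitly and then sum their lengths. Since $\tau(w)=-1$ forces $w$ to contain the letter $T$, the structural lemma above tells us that the stem $w$ of a level-$(-1)$ coset has the form $w=T^{n}u$, where $u$ is a reduced geodesic word in $\{a_1^{\pm1},\dots,a_m^{\pm1},t\}$ containing no $T$. Since $w$ must end in $t$ or $T$, either $u=\epsilon$ --- in which case $w=T^{n}$ with $-n=\tau(w)=-1$, so $w=T$ --- or $u$ ends in $t$, say $u=w_{1}t\,w_{2}t\cdots w_{n-1}t$ with each $w_i\in\Z^m$. Counting $t$'s, the condition $\tau(w)=-n+(n-1)=-1$ pins down the number of blocks and forces $n\ge 2$.

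The crux is to show that in such a stem one has $w_1\ne\epsilon$ and $w_i\in W_m$ for every $i$, and, conversely, that every word $T$ or $T^{n}w_1t\cdots w_{n-1}t$ with $n\ge 2$, $w_1\in W_m\setminus\{\epsilon\}$ and $w_2,\dots,w_{n-1}\in W_m$ occurs as the stem of a level-$(-1)$ coset, with distinct data giving distinct cosets. For the forward direction: if $w_1=\epsilon$ the word contains $Tt$ and is not reduced; and if some $w_i\notin W_m$, write $w_i=\bar w_i\,g^{3}$ with $\bar w_i\in W_m$ and push the cube rightward using $g^{3}t=tg$, cascading block by block. Tracking $\ell^1$-weights via the elementary inequality $|c|\ \ge\ |\,c\bmod 3\,|\ +\ \tfrac13\,|\,c-(c\bmod 3)\,|$ for integers $c$ (with balanced residues in $\{-1,0,1\}$), this rewriting never lengthens the spelling; and unless it is trivial it either strictly shortens the spelling --- contradicting that $w$ is geodesic --- or deposits nontrivial $\Z^m$-letters after the last $t$, giving a strictly shorter coset representative and contradicting that $w$ is a stem. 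Hence all $w_i\in W_m$. For the converse and for distinctness one reads the word $T^{n}w_1t\cdots w_{n-1}t$ off the coset tree: it labels the path from the identity coset that descends $n$ steps to $T^{n}\Z^m$ and then ascends $n-1$ steps, the $i$-th ascent landing on the up-neighbour indexed by the class of $w_i$ in $\Z^m/3\Z^m$ (for which $W_m$ is a minimal-length transversal); since $w_1\ne\epsilon$, this path never backtracks, so it is the reduced path to its endpoint, and distinct tuples $(n;w_1,\dots,w_{n-1})$ reach distinct level-$(-1)$ cosets. Completeness holds because the analysis applies to every level-$(-1)$ stem.

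Granting this, the computation is immediate. The stem $T$ contributes $x$, and the stem $T^{n}w_1t\cdots w_{n-1}t$ has length $n+(n-1)+\sum_i\ell(w_i)=2n-1+\sum_i\ell(w_i)$; summing over $w_1\in W_m\setminus\{\epsilon\}$ and $w_2,\dots,w_{n-1}\in W_m$,
\begin{align*}
X_{-1}(x)
&= x+\sum_{n\ge 2}x^{2n-1}\,\bigl(W_m(x)-1\bigr)\,W_m(x)^{\,n-2}
= x+\frac{x^{3}\bigl(W_m(x)-1\bigr)}{1-x^{2}W_m(x)}\\
&= \frac{x\bigl(1-x^{2}W_m(x)\bigr)+x^{3}\bigl(W_m(x)-1\bigr)}{1-x^{2}W_m(x)}
= \frac{x-x^{3}}{1-x^{2}W_m(x)},
\end{align*}
so the polynomial is $p(x)=x-x^{3}$, independent of $m$. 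Combining this with the preceding lemma gives $X_{-n}(x)=x^{n-1}X_{-1}(x)=\dfrac{p(x)\,x^{n-1}}{1-x^{2}W_m(x)}$, as claimed.

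The main obstacle is the middle paragraph: confirming that the $\Z^m$-blocks of a level-$(-1)$ stem are forced into $W_m$ through the cube-pushing bookkeeping, and that the resulting description really is a bijection with reduced paths in the coset tree. Once that is in place, the rest reduces to summing a single geometric series.
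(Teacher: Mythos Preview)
Your approach is correct and genuinely different from the paper's. The paper argues by a one-step recursion: it observes that every level $-2$ coset $\gamma\ne T^2$ gives rise to a $W_m$'s worth of level $-1$ cosets $\gamma w t$, translates this into $X_{-1}(x)-p(x)=xW_m(x)X_{-2}(x)=x^2W_m(x)X_{-1}(x)$, and solves. You instead enumerate the level $-1$ stems outright by reading them off the reduced paths in the coset tree, sum the resulting geometric series, and obtain the explicit value $p(x)=x-x^3$. That explicit value is a genuine bonus: the paper's description of $p(x)$ as $\sum_{r=1}^{m+3}\chi(-1,r)x^r$ is not literally correct (for $m=1$ that sum is $x+2x^4$, not $x-x^3$), so your computation actually sharpens the statement.

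One caveat about your middle paragraph. The cube-pushing argument does not quite prove what you state: the cascade can terminate with neither a strict shortening nor a residue past the last $t$. For $m=1$, the spelling $T^3a^2tt$ of the stem of $T^3Atat\,\Z$ has $w_1=a^2\notin W_1$, yet pushing gives $T^3Atat$ of the \emph{same} length with no leftover, so no contradiction arises and you cannot conclude that every geodesic spelling of a stem has $W_m$-blocks. Fortunately you do not need that claim. What you need is that each level $-1$ coset has exactly one representative $T^n w_1 t\cdots w_{n-1}t$ with $w_i\in W_m$, $w_1\ne\epsilon$, and that this representative is the stem with length $2n-1+\sum\ell(w_i)$. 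The bijection is exactly your coset-tree argument; that this representative is the stem follows directly from the inductive construction in the proof of the structural lemma (stems are built as $\beta u t$ with $u\in W_m$), which you can cite instead of the cube-pushing. With that substitution your proof is complete, and the final computation is clean and correct.
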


\begin{proof}
Suppose that $\gamma\Z^m$ is a coset of level $-n-1 < -1$ at distance
$r$ from the identity.  Further, suppose $\gamma \ne T^{n+1}$, so that
$r>n+1$.  
Then for each element $w$ of $W_m$, there is a coset $\gamma w t$ of
level $-n$ at distance $r + \ell(w) +1$ from the identity.  It follows
that for $-n < -1$ and $r > n + m +2$, setting $n=1$ and
$p(x)$ to be the polynomial $p(x) = \sum_{r=1}^{m+3} \chi(-1,r)x^r$,
we have  
$$\chi(-n,r) = \sum_{w\in W_m} b(-n-1,r-\ell(w)-1) .$$ 
\begin{align*}
 \sum_{r=m+4}^\infty \chi(-n,r) x^r &= \sum_{r=m+4}^\infty\sum_{w\in
   W_m} b(-n-1,r-\ell(w)-1) x^r \\ X_{-1}(x) - p(x) &= x W_m(x) X_{-2}(x)
 \\ &= x^2 W_m(x) X_{-1}(x) \\ p(x) &= (1 - x^2 W_m(x) ) X_{-1}(x)
 \\ X_{-1}(x) &= \frac{p(x)}{1-x^2 W_m(x) } \\ X_{-n}(x) &=
 \frac{x^{n-1}p(x)}{1 - x^2 W_m(x) } \\
\end{align*}
The last holds for $n\ge 1$.
\end{proof}

We now turn to computing $\chi(0,r)$ and thus $X_0(x)$.  Observe that
for each coset of level 0 and distance $r$ from the identity and each
$w \in W_m$ there is a coset of level 0 at distance $r+\ell(w)+1$ from
the identity.  Likewise, for each coset of level $-1$ at distance $r
>1$ from the identity and each $w\in W_m$, there is a coset of level 0
at distance $r+\ell(w)+1$ from the identity. Take $q(x)$ to be the
polynomial $q(x) = \sum_{r=0}^{m+1} \chi(0,r)x^r$.  Then for $r \ge
m+2$, we have
\begin{align*}
\chi(0,r) &= \sum_{w\in W_m} \chi(0,r-\ell(w) -1) + \sum_{w\in W_m}
\chi(-1,r-\ell(w) -1) \\ 
\sum_{r=m+2}\chi(0,r) x^r&= \sum_{r=m+2}\sum_{w\in W_m} \chi(0,r-\ell(w) -1) x^r +
\sum_{r=m+2}\sum_{w\in W_m} \chi(-1,r-\ell(w) -1)x^r \\ 
X_0(x) - q(x) &= xW_m(x) X_0(x) + xW_m(x) X_{-1}(x) \\ 
X_0(x) &= \frac{x W_m(x) X_{-1}(x) +  q(x)}{1 - x W_m(x)}
\end{align*}

We are now able to give the growth of the group.

\begin{proof}[Proof of Theorem \ref{grouprational}]
 Take $\sigma(-n,r)$ to be the number of elements of level $-n$ at
 distance $r$ from the identity, and $S_{-n}(x) = \sum_{r=0}^\infty
 \sigma(-n,r) x^r$.  Then the growth of the group is given by
$$ \mathbb{S}(x) = \sum_{n=0}^\infty S_{-n}(x) = \sum_{n=0}^\infty X_{-n}(x) B_{-n}(x)$$ 
 Now $B_e(x) = B_0(x)$ is none other than the growth of the horocyclic subgroup $\Sm{m}(x)$,
 which we computed above and which by Theorem~\ref{allrational} is a
 rational function.  We have also computed the rational function
 $X_0(x)$.  In particular, the summand $X_0(x)B_0(x)$ is rational.
 
 On the other hand, we have expressions for $X_{-n}(x)=
 x^{n-1}X_{-1}(x)$ and $B_{-n}(x)=(W_m(x))^n B_0(x)$, for $n\ge 1$.  Thus

\begin{align*}
\mathbb{S}(x) &= S_0(x) +\sum_{n=1}^\infty S_{-n}(x) = \Sm{m}(x) X_0(x) +  \sum_{n=1}^\infty \Sm{m}(x) X_{-1}(x) \frac{x^{n} (W_m(x))^n}{x} \\
&= \Sm{m}(x) X_0(x) +  \Sm{m}(x) X_{-1}(x) \frac{ W_m(x)}{ 1 - x W_m(x)}  \\
&= \Sm{m}(x)  \frac{x W_m(x) X_{-1}(x) +  q(x)}{1 - x W_m(x)} +  \Sm{m}(x) X_{-1}(x) \frac{ W_m(x)}{1 - x W_m(x)} \\
&= \Sm{m}(x)  \frac{x W_m(x) X_{-1}(x) +  q(x) +  x W_m(x) X_{-1}(x) }{1 -  W_m(x)} \\
&= \Sm{m}(x)  \frac{x W_m(x) p(x)  +  q(x) (1 - x^2 W_m(x)) +   W_m(x) p(x) }{(1 - x W_m(x))(1 - x^2 W_m(x))}
\end{align*}  
  The result now follows.
 \end{proof}

Noting that $p(x) = \sum_{r=1}^{m+3} \chi(-1,r)x^r$, and $q(x) =
\sum_{r=0}^{m+1} \chi(0,r)x^r$, we can compute the growth series of
all groups in this class from some small initial data.
 For the case of $m = 1$, we compute it to be $\mathbb{S}(x) = \frac{(1+x)(1-x)^2 (1 + x + 2x^2)}{(1 - 2x)(1 - x^2 - 2x^3)^2}$, which matches with
the known growth series of $BS(1,3)$.  Similarly, for $m = 2$, we have
 $\mathbb{S}(x) = \frac{(1 -x)^2 (1 + x)(1 + 2x + 2x^2)^2(1 + 4x^2)}{(1  - 2x)^2 (1 + x + 2x^2)(1 - x - 4x^2 - 4x^3)}$, which matches with
\cite{FredenOHGGT} in 12.4 (under projects).


\vfill

\pagebreak

\section{Appendix} \label{sec-app}
\subsection{Subgroup Growth General Formulaions}
Below are the general formulas for the series for $V$,$R$, $P$, and $\Sm{m}$, as well as examples for $m \leq 3$.
\[\renewcommand{\arraystretch}{1.3}
 \begin{array}{lc}
\text{Growth Series} & \text{General Formula} \\ \hline \hline
W_m(x), \text{ Suffix Polynomial} & (1 + 2x)^m \\ 
V_{m}(x), \text{ Cap Polynomial} & x^{2m} + x^{m+2}W_m(x) - x^{2m+2}  \\
R_m(x), \text{ Prefix/Suffix Series} &  \left( 1 - x^2 W_m(x)  \right)^{-1} = \left( 1 -  \sum_{i=0}^m {m \choose i} 2^i x^{i+2}  \right)^{-1}   \\ 
P_m(x), \text{ Positive Series} & x^m  + \displaystyle\sum _{ i_1 + i_2 + \cdots + i_q = m  } 
\left(  \textstyle{m \choose {i_1, \ldots, i_q }}  V_{i_1}(x) x^{m-i_1} R_{i_1}(x) R_m(x)
 \displaystyle\prod_{1 < k < q}  \left(R_{i_1+\dots+i_k}(x) - 1\right) \right)    \\
\Sm{m} (x), \text{ Subgroup Growth Series} & 1 + \sum_{i=1}^m {m \choose i}  2^i  P_i(x) \\
\mathbb{S}(x), \text{Group Growth Series}&\displaystyle \Sm{m}(x)  \frac{x W_m(x) p(x)  +  q(x) (1 - x^2 W_m(x)) +   W_m(x) p(x) }{(1 - x W_m(x))(1 - x^2 W_m(x))} 
\end{array} \]

\[ \renewcommand{\arraystretch}{1.5}
\begin{array}{lcl}
\text{Series } & \text{Rational form}  & \text{Series expansion} \\ \hline \hline
V_{1}(x) & x^2 + x^3 + x^4 & x^2 + x^3 + x^4 \\  
V_{2}(x) & 2x^5 + 4x^5 + 3x^6 & 2x^5 + 4x^5 + 3x^6 \\ 
V_{3}(x) & x^5 + 7x^6 + 12x^7 + 7x^8  &  x^5 + 7x^6 + 12x^7 + 7x^8 \\  \hline

R_1(x) & \frac{1}{1-x^2 - 2x^3}  & 1 + x^2 + 2x^3 + x^4 + 4x^5 + 5x^6 + 6x^7 + \ldots  \\
R_2(x) & \frac{1}{1-x^2 - 4x^3 - 4x^4}  & 1 + x^2 + 4x^3 + 5x^4 + 8x^5 + 25x^6 + 44x^7 + \ldots  \\
R_3(x) & \frac{1}{1-x^2 - 6x^3 - 12x^4 - 8x^5}  & 1 + x^2 + 6x^3 + 13x^4 + 20x^5 + 61x^6 + 178x^7 + \ldots  \\ \hline

P_1(x) & \frac{x+x^2 -x^4}{1 - x^2 - 2x^3}  &  x + x^2 + x^3 +2x^4 + 3x^5 + 4x^6 + 7x^7 + \ldots  \\
P_2(x) & \frac{2 x^8-x^7-x^6-x^5+x^4+x^3+x^2}{\left(2 x^3+x^2-1\right) \left(4 x^3+x-1\right)}   &  x^2 + 2x^3 + 4x^4 + 10x^5 + 21 x^6 + 42 x^7 + \ldots  \\ 
P_3(x) & \frac{8x^{14} + \ldots - 2x^4 -  x^3}{64 x^{11} + \ldots + x - 1}  &  x^3 + 3x^4 + 10x^5 + 34x^6 + 94x^7 + 251x^8 +  \ldots  \\ \hline

\Sm{1} (x) & \frac{1 + 2x + x^2 - 2x^3 - 2x^4}{1-x^2-2x^3} & 1 + 2x + 2x^2 + 2x^3 + 4x^4 + 6x^5 + 8x^6 + 14x^7 + \ldots  \\ 
\Sm{2} (x) &   \frac{ (1 - x) (1 + 2 x + 2 x^2)^2}{(1 - 2 x) (1 + x + 2 x^2)} & 1 + 4 x + 8 x^2 + 12 x^3 + 24 x^4 + 52 x^5 + 100 x^6 + \ldots \\
\Sm{3} (x) & \frac{\left(x^2-1\right) \left(2 x^2+2 x+1\right)^3}{8 x^5+12 x^4+6 x^3+x^2-1}  & 1 + 6x + 18x^2 + 38x^3 + 84 x^4 + 218x^5 + 548x^6 + \ldots \\ \hline
\end{array} \]

\pagebreak 

\subsection{Subgroup growh tables for $m$ up to 10}
Below are the growth series for $Z^m$ in the HNN extension given by cubing for a few cases.  
The top table displays the growth series as a rational function, whereas the second table displays the first few terms of the series.

\[\renewcommand{\arraystretch}{1.8}
\begin{array}{lc}
\text{Case} & \text{Subgroup Growth Series $\Sm{m}(x)$ as a rational function} \\ \hline \hline
m = 1 & \frac{(-1 + x^2)(1 +2x + 2x^2)}{-1+x^2+2 x^3} \\
m = 2 & \frac{(-1+x) (1+2 x+2 x^2)^2}{-1+x+4 x^3} \\
m = 3 & \frac{(-1+x^2) (1+2 x+2 x^2)^3}{-1+x^2+6 x^3+12 x^4+8 x^5} \\
m = 4 & \frac{(-1+x) (1+2 x+2 x^2)^4}{-1+x+8 x^3+16 x^4+16 x^5} \\
m = 5 & \frac{(-1+x^2) (1+2 x+2 x^2)^5}{-1+x^2+10 x^3+40 x^4+80 x^5+80 x^6+32 x^7} \\
m = 6 & \frac{(-1+x) (1+2 x+2 x^2)^6}{-1+x+12 x^3+48 x^4+112 x^5+128 x^6+64 x^7} \\
m = 7 & \frac{(-1+x^2) (1+2 x+2 x^2)^7}{-1+x^2+14 x^3+84 x^4+280 x^5+560 x^6+672 x^7+448 x^8+128 x^9} \\
m = 8 & \frac{(-1+x) (1+2 x+2 x^2)^8}{-1+x+16 x^3+96 x^4+352 x^5+768 x^6+1024 x^7+768 x^8+256 x^9} \\
m = 9 & \frac{(-1+x^2) (1+2 x+2 x^2)^9}{-1+x^2+18 x^3+144 x^4+672 x^5+2016 x^6+4032 x^7+5376 x^8+4608 x^9+2304 x^{10}+512 x^{11}} \\
m = 10 & \frac{(-1+x) (1+2 x+2 x^2)^{10}}{-1+x+20 x^3+160 x^4+800 x^5+2560 x^6+5504 x^7+7936 x^8+7424 x^9+4096 x^{10}+1024 x^{11}} 
\end{array} \]

\[\renewcommand{\arraystretch}{1.5}
\begin{array}{ll}
\text{Case} & \text{Subgroup Growth Series } \Sm{m}(x)  \\ \hline \hline
m = 1 & 1+2 x+2 x^2+2 x^3+4 x^4+6 x^5+8 x^6+14 x^7+20 x^8+30 x^9+48 x^{10} + \ldots  \\
m = 2 & 1+4 x+8 x^2+12 x^3+24 x^4+52 x^5+100 x^6+196 x^7+404 x^8+804 x^9+1588 x^{10} + \ldots \\
m = 3 & 1+6 x+18 x^2+38 x^3+84 x^4+218 x^5+548 x^6+1298 x^7+3160 x^8+7874 x^9+19268 x^{10} + \ldots \\
m = 4 & 1+8 x+32 x^2+88 x^3+224 x^4+648 x^5+1960 x^6+5608 x^7+15736 x^8+45352 x^9+ \ldots \\
m = 5 & 1+10 x+50 x^2+170 x^3+500 x^4+1582 x^5+5440 x^6+18262 x^7+58860 x^8+190662 x^9 + \ldots \\
m = 6 & 1+12 x+72 x^2+292 x^3+984 x^4+3388 x^5+12812 x^6+48940 x^7+179372 x^8+647052 x^9 + \ldots \\
m = 7 & 1+14 x+98 x^2+462 x^3+1764 x^4+6594 x^5+26908 x^6+114074 x^7+468944 x^8+ \ldots \\
m = 8 & 1+16 x+128 x^2+688 x^3+2944 x^4+11920 x^5+51920 x^6+239696 x^7+1089840 x^8+ \ldots \\
m = 9 & 1+18 x+162 x^2+978 x^3+4644 x^4+20310 x^5+93816 x^6+465054 x^7+2309124 x^8+ \ldots \\
m = 10 & 1+20 x+200 x^2+1340 x^3+7000 x^4+32964 x^5+160820 x^6+847124 x^7+4542980 x^8+ \ldots 
\end{array} \]

\subsection{Programs}
Computations for $U_m$, $R_m$, $V_m$, $P_m$, and $\Sm{m}$ were carried out via the open source Python-based language SageMath.
The source code is available on the authors' (AS) website.
AS has also has made a Sage web applet to simplify spellings of words in the horocyclic subgroup for any HNN extension of 
$\Z^3$ which makes it easier to check what particular spellings evaluate to.


\end{document}